\newtheorem{thm}{Theorem}[section]
\newtheorem{prop}[thm]{Proposition}
\newtheorem{cor}[thm]{Corollary}
\newtheorem{lem}[thm]{Lemma}
\newtheorem{defn}[thm]{Definition}
\newtheorem{rem}[thm]{Remark}
\def\C{{\mathbb C}}
\def\R{{\mathbb R}}
\def\Z{{\mathbb Z}}
\def\T{{\mathbb T}}
\def\cA{{\mathcal A}}
\def\cH{{\mathcal H}}
\def\cK{{\mathcal K}}
\def\cL{{\mathcal L}}
\def\cR{{\mathcal R}}
\def\Tr{{\rm Trace}}
\def\sin{{{\rm sin}}}
\def\cos{{{\rm cos}}}
\def\qqq{\,,\quad~\forall}
\def\ries{\Sigma}
\def\lap{\triangle}
\def\lapa{\triangle}
\def\modu{\Delta}
\def\pert{\triangle_\vp}
\def\perta{\triangle_\vp}
\def\pertb{\triangle^{(1, 0)}_\vp}
\def\pertc{\triangle^{(0, 1)}_\vp}
\def\pertd{\triangle^{(0, 1)}_\epsilon}
\def\D{\triangle}
\def\dvp{{\bar D_\varphi}}
\def\ttd{{\tilde{\triangle}_\ve}}
\def\fH{{\mathfrak H}}
\def\a{\alpha}
\def\b{\beta}
\def\d{\delta}
\def\g{\gamma}
\def\Om{\Omega}
\def\ve{\varepsilon}
\def\vp{\varphi}
\def\cutint{{\int \!\!\!\!\!\! -}}
\newcommand{\ie}{{i.e.\/}\ }
\newcommand{\eg}{{e.g.\/}\ }
\newcommand{\cf}{{cf.\/}\ }
\newcommand{\comp}{{comp.\/}\ }
\def\text{\hbox}
\def\Det{{\rm Det}}
\def\Grad{{\rm Grad}}
\def\grad{{\rm grad}}
\def\Ker{{\rm Ker}}
\def\Tr{{\rm Tr}}
\def\T{{\rm T}}
\def\r{{\rm r}}
\def\lmod{\nabla}
\def\newval{\log \left(4 \pi^2 \,  |\eta(\tau)|^4 \right)}
\newcommand{\ch}{\cosh}
\newcommand{\sh}{\sinh}
\newcommand{\nil}[1]{}
\title[Modular curvature]{Modular curvature  for
noncommutative two-tori}
\begin{document}

\author[Connes]{Alain Connes}
\author[Moscovici]{Henri Moscovici}
\address{A.~Connes: Coll\`ege de France \\
3, rue d'Ulm \\ Paris, F-75005 France\\
I.H.E.S. and Vanderbilt University} \email{alain\@@connes.org}
\address{H.~Moscovici:
Department of mathematics, The Ohio State University, Columbus, OH 43210, USA}
\email{henri@math.ohio-state.edu}

\thanks{The work of the first named author was partially
 supported by the National Science Foundation
 award no. DMS-0652164}

\thanks{The work of the second named author was partially
 supported by the National Science Foundation
 award no. DMS-0969672}

\begin{abstract}
 
In this paper we investigate the curvature of conformal deformations
by noncommutative Weyl factors of a flat metric
on a noncommutative 2-torus, by analyzing in the framework of spectral triples
functionals 
associated to perturbed Dolbeault operators. 
The analogue of Gaussian curvature turns out to be a sum of
two functions in the modular operator corresponding to the
non-tracial weight defined by the conformal factor, applied to expressions 
involving the derivatives of the same factor. The first is a generating
function for the Bernoulli numbers and is
applied to the noncommutative Laplacian of the conformal factor, while the
second is a two-variable function and is applied to a quadratic form
in the first derivatives of the factor. Further outcomes of the paper include
a variational proof of the Gauss-Bonnet theorem for noncommutative 2-tori, 
the modular analogue of Polyakov's conformal anomaly formula for 
regularized determinants of Laplacians,
a conceptual understanding of the modular curvature as 
gradient of the Ray-Singer analytic torsion, 
and the proof using operator positivity that the scale invariant version
of the latter assumes its extreme value only at the flat metric.
\end{abstract}

\maketitle

\section*{Introduction}

In noncommutative geometry the paradigm of a geometric space is given in spectral terms,
by a Hilbert space $\cH$ in which both the algebra $\cA$ of coordinates and the
analogue of
the inverse line element $ds^{-1}$ are represented, the latter being embodied by
 an unbounded self-adjoint operator  $D$ which plays the role of the Dirac operator.
 The local geometric invariants such as the Riemannian curvature are extracted from 
 the functionals defined by the coefficients of heat kernel expansion 
$$
\Tr(a e^{-tD^2})\, \sim_{t \searrow 0} \, \sum_{n \geq 0} {\rm a}_n(a,D^2)t^{\frac{-d+n}{2}} \, , \quad a \in \cA ,
$$
where $d$ is the dimension of the geometry. Equivalently, 
one may consider special values of the corresponding zeta functions.
 Thus, it is the high frequency behavior of the spectrum of $D$ coupled with the action of the algebra $\cA$ in $\cH$ which detects the local curvature of the geometry.

\medskip

In this paper we implement the Riemannian aspect of this program in great depth
on an archetypal example, that of the noncommutative two torus $\T^2_\theta$, whose differential geometry as well as pseudo-differential operator calculus 
were first developed in \cite{C}. To obtain a curved geometry from the flat one defined in \cite{C}, one introduces (\cf \cite{cc}, \cite{Paula}) a noncommutative Weyl conformal factor (or dilaton), 
which changes the metric by modifying the noncommutative volume form 
while keeping the same conformal structure. 
Both notions of volume form and of conformal structure are well understood in the general case (\cf \cite[\S VI]{Co-book}). We recall in \S \ref{sectprel} how one obtains the modified Dirac operator for the curved geometry obtained from a flat one by modifying the volume form.
 \medskip
 
  The starting point is the computation of the value at $s=0$ of the zeta function $\Tr(a|D|^{-2s})$ for the $2$-dimensional
curved geometry associated to the dilaton $h$, or equivalently of the coefficient
${\rm a}_2(a,D^2)$ of the heat expansion. This computation was
initiated in the late 1980's (\cf \cite{cc}), and the specific result which proves the analogue of the Gauss--Bonnet formula was published in \cite{Paula}. It was subsequently extended in \cite{FK} 
to the case of arbitrary values of the complex modulus $\tau$ (set to $\tau=i$ in
\cite{Paula}). In both these papers only the total integral of the curvature was needed, and this allowed one to make simplifications under the trace which are no longer possible when
$a \neq 1$, \ie when one wants to fully compute the local expression for the functional
$a \in \cA \mapsto {\rm a}_2(a,D^2)$. 
 \medskip
 
While the original computation of \cite{cc} was done entirely
by hand, the technical obstacles encountered when dealing with
 the local computation were overcome by means of the general Rearrangement 
Lemma of \S \ref{sectrearang}, and the assistance of the computer. The latter is not
indispensable, its main role being to facilitate and achieve in a safe way the routine task of
collecting together the large number (around one thousand) of terms
 which arise when applying the generalized pseudo-differential calculus 
and the main algebraic lemma. 
The complete calculation of ${\rm a}_2(a,D^2)$ was actually performed
in 2009 and announced at several conferences
(Oberwolfach 2009 and Vanderbilt 2011), as well as
by internet posting (with some typographical errors). 
The same computation was
 independently done by F. Fathizadeh and M. Khalkhali in
\cite{FK1}, and gave further confirmation to our result.
\medskip

The main additional input of the present paper stems from the fact that we succeeded to express 
in terms of a closed formula the
Ray-Singer log-determinant of $D^2$, issue which was left open in \cite{cc}.
The gradient of the log-determinant functional, or equivalently of the scale invariant version 
of it (cf. \cite{OPS}), yields in turn a local curvature formula,
which arises as a sum of two terms, each involving a 
function in the modular operator, of one and respectively two variables.
Computing the gradient in two different ways leads to the proof of
a deep internal consistency relation between these two distinct constituents, and
at the same time elucidates the meaning of the intricate two operator-variable function.
 
\medskip

We now briefly outline the contents of this paper,
starting with the description of the local curvature functionals determined by
the value at zero of the zeta functions affiliated with the modular spectral triples
describing the curved geometry of noncommutative $2$-tori.
As in the case of the standard torus viewed as a complex curve,
the total Laplacian associated to such a spectral triple splits into two
components, one $\perta$ on functions and the other $\pertc$  on $(0,1)$-forms, the two
operators being isospectral outside zero. The
corresponding curvature formulas involve
second order (outer) derivatives of the Weyl factor, and as a new and
crucial ingredient they involve
 the modular operator $\modu$ of the non-tracial weight $\vp(a)=\vp_0(a e^{-h})$ associated to the dilaton $h$. For $\perta$ the result is of the form
\begin{equation}\label{a2term}
    {\rm a}_2(a,\perta)=-\frac{\pi}{2\tau_2}\vp_0(a\left(K_0(\nabla)(\lapa(h))+\frac 12 H_0(\nabla_1,\nabla_2)(\square_\Re (h)\right) ,
\end{equation}
where $\nabla=\log \modu$ is the inner derivation implemented  by $-h$,
$$
 \lapa(h)=
  \delta_1^2(h)+2 \Re(\tau)\delta_1\delta_2(h)+|\tau|^2 \delta_2^2(h) ,
 $$
 $\square_\Re$ is the Dirichlet quadratic form
 $$
 \square_\Re (\ell) :=
(\delta_1(\ell))^2+ \Re(\tau)\left(\delta_1(\ell)\delta_2(\ell)+\delta_2(\ell)\delta_1(\ell)\right)+|\tau|^2 (\delta_2(\ell))^2\,,
$$
and $\nabla_i, \, i=1, 2$, signifies that $\nabla$ is acting on the $i$th factor.
The operators $K_0(\nabla)$ and $H_0(\nabla_1,\nabla_2)$ are new ingredients, whose
 occurrence is a vivid manifestation of the genuinely non-unimodular nature of
 the conformal geometry of the noncommutative $2$-torus.
 The functions $K_0(u)$ and $H_0(u,v)$ by which the modular derivatives act
  seem at first of a rather formidable nature, and of course beg for a
  conceptual understanding. Their expressions, arising from the computation, are as follows:
  \begin{equation}\label{k0funct}
    K_0(s)=\frac{-2+s\, {\rm coth}\left(\frac{s}{2}\right) }{s\, \sinh\left(\frac{s}{2}\right)} \, ,
\end{equation}
\begin{align}\label{basicquadra1pre}
\begin{split}
&\text{and} \, \qquad \qquad \qquad \qquad \qquad \, H_0(s,t)= \\
&\frac{t (s+t) \ch(s)-s (s+t) \ch(t)+(s-t) (s+t+\sinh(s)+\sinh(t)-\sinh(s+t))}{s t (s+t) \sinh\left(\frac{s}{2}\right) \sinh\left(\frac{t}{2}\right) \sinh\left(\frac{s+t}{2}\right)^2}.
\end{split}
\end{align}
One of our new results consists in giving an abstract proof of a
 functional relation between the functions $K_0$ and $H_0$. More precisely, denoting
$$
\tilde{K}_0(s)\, = \, 4\frac{\sinh(s/2)}{s} K_0(s) \quad \text{and} \quad
\tilde{H}_0(s,t)\, = \, 4\frac{\sinh((s+t)/2)}{s+t} H_0(s,t),
$$
we establish by an a priori argument the identity
\begin{equation}\label{transforulepre}
  - \frac 12 \tilde H_0(s_1,s_2)=\frac{\tilde K_0(s_2)-\tilde K_0(s_1)}{s_1+s_2}+\frac{\tilde K_0(s_1+s_2)-\tilde K_0(s_2)}{s_1}-\frac{\tilde K_0(s_1+s_2)-\tilde K_0(s_1)}{s_2}
\end{equation}
The function $\tilde K_0$ is (up to the factor $\frac 18$) the generating function of the Bernoulli numbers, \ie one has
\begin{equation}\label{bernou1}
\frac 18\tilde K_0(u)=\sum_1^\infty \frac{B_{2n}}{(2n)!}u^{2n-2}\,.
\end{equation}
Another main result consists in obtaining the following closed formula
for the Ray-Singer determinant:
\begin{equation}\label{raysinger}
\log \Det^\prime (\perta) = \log \vp (1)+\newval
+\frac{\pi}{8\tau_2}  \vp_0\left(
\tilde K_0(\nabla_1 )(\square_\Re(h))\right)
\end{equation}
 The a priori proof of the functional relation \eqref{basicquadra1pre} is based on the computation of the gradient of the Ray-Singer determinant in two different ways. Using the left hand side of \eqref{raysinger} one obtains a formula involving ${\rm a}_2(a, \perta)$,
while using the right hand side of \eqref{raysinger} gives a general expression  as shown in Theorem \ref{directgrad} of \S \ref{functrel}.

  \medskip

 As a third fundamental result of this paper, we establish
  the analogue of the  classical result which asserts that in every conformal class the maximum
value of the determinant  of the Laplacian
for metrics of a fixed area is uniquely attained at the constant curvature
metric. This is the content of Theorem \ref{thmmain3}, whose
proof relies on the positivity of the function $\tilde K_0$.
By \eqref{bernou1}, $\tilde K_0$ is a generating function for Bernoulli numbers, known to
play a prominent role in the theory of characteristic classes of deformations,
where it is used as a formal power series. It is quite striking that in the present
context of a conformal (but not formal) deformation, $\tilde K_0$
 appears no longer merely as a formal series but as an actual function, whose
 {\em positivity} plays a key role.

 \medskip

In marked contrast to the ordinary torus, for which
${\rm a}_2(a,\perta)$ and ${\rm a}_2(a,\pertc)$ are both
 constant multiples of the scalar (or Gaussian) curvature,
the local curvature expressions associated to the zeta functions of the two partial Laplacians
differ substantially.
The function $H_1(s,t)$ of two variables involved in the expression of ${\rm a}_2(a,\pertc)$ is
 related to $H_0(s,t)$ in a simple fashion,
 but a new term appears, in the form of an operator
 $S(\nabla_1,\nabla_2)$ applied to the skew quadratic form
\begin{equation}\label{skewpre}
\square_\Im (\ell) :=
i\,  \Im (\tau)\left(\delta_1(\ell)\delta_2(\ell)-\delta_2(\ell)\delta_1(\ell)\right)  , \quad \ell = 2h ;
\end{equation}
It could be useful to find a
fully conceptual understanding of the meaning of this term.

 \smallskip

Being isospectral outside zero, both partial Laplacians have the
same Ray-Singer determinant. This gives rise to a single log-determinant functional,
which represents in fact the analytic torsion of the underlying conformal structure. 
By analogy with the classical case, its
gradient provides the appropriate notion of scalar curvature, and
the corresponding evolution equation for the metric yields
the natural analogue of Ricci flow. A different version of
the latter has been proposed in \cite{BhuM}.

\bigskip

\tableofcontents

\section{Modular spectral triples for noncommutative $2$-tori}\label{sectprel}

The preliminary material gathered in this section is essentially borrowed from \cite{cc}
in order to provide the necessary background for the present paper.
It also serves as a first illustration of the
distinctly non-unimodular feature of  the conformal geometry of
noncommutative $2$-tori, which in particular validates the treatment of twisted
spectral triples~\cite{cm} as basic geometric structures.

\subsection{Inner twisting in the even case} \hfill\medskip

The modular spectral triples considered below can be understood as special cases of the following general construction. Let us start from an ordinary spectral triple $(\cA,\cH,D)$
which we assume to be even (and we let $\gamma$ be the grading operator).  Using the direct sum decomposition $\cH=\cH^+\oplus \cH^-$ the action of the algebra $\cA$, the grading operator and the operator $D$ take the form
\begin{equation}\label{matrix}
    a\mapsto \left(
               \begin{array}{cc}
                 a & 0 \\
                 0 & a \\
               \end{array}
             \right)\,, \ \ \gamma=\left(
                                     \begin{array}{cc}
                                       1 & 0 \\
                                       0 & -1 \\
                                     \end{array}
                                   \right)\,, \ \
                                   D=\left(
                                       \begin{array}{cc}
                                         0 & T^* \\
                                         T & 0 \\
                                       \end{array}
                                     \right)
\end{equation}
were $T$ is an unbounded operator from its domain in $\cH^+$ to $\cH^-$ and $T^*$ is its adjoint. Let now $k\in \cA$ be a positive invertible element. Since the commutator $[D,k]$ is bounded the multiplication by $k$ preserves the domain of $T$ and the following operator is self-adjoint
\begin{equation}\label{dk}
    D_{(k,\gamma)}=\left(
          \begin{array}{cc}
            0 & k T^*  \\
            Tk & 0 \\
          \end{array}
        \right)
\end{equation}
We use the notion of modular (or twisted) spectral triple in the sense of Definition 3.1 of \cite{cm}. Let us show that the perturbation $D_{(k,\gamma)}$ of $D$ defines a twisted spectral triple on $\cA$ with respect to the inner automorphism $\sigma$.
\begin{lem} \label{tiwst1}
Let  $\sigma(a)=kak^{-1}$ be the (non-unitary) inner automorphism of $\cA$ associated to $k$.
The triple $(\cA,\cH,D)$ is a $\sigma$-twisted spectral triple.
\end{lem}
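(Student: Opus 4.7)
The plan is to unpack the definition of a $\sigma$-twisted spectral triple from \cite{cm} and verify it by a direct matrix computation. Recall that we must show that for every $a\in\cA$ the twisted commutator
$$[D_{(k,\gamma)}, a]_\sigma \; := \; D_{(k,\gamma)}\,a - \sigma(a)\,D_{(k,\gamma)}$$
extends to a bounded operator on $\cH$. Self-adjointness of $D_{(k,\gamma)}$ is immediate from $k=k^*$, which makes the matrix in \eqref{dk} formally symmetric, and from the fact that multiplication by $k$ preserves the domain of $T$ (since $[D,k]$ is bounded).

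Writing $a$ diagonally and substituting \eqref{dk}, the twisted commutator takes the block form
$$[D_{(k,\gamma)}, a]_\sigma \; = \; \begin{pmatrix} 0 & kT^*a - kak^{-1}\cdot kT^* \\ Tk\cdot a - kak^{-1}\cdot Tk & 0 \end{pmatrix}.$$
The upper-right entry is transparent: it simplifies to $k\bigl(T^*a-aT^*\bigr) = k[T^*,a]$, which is bounded because $[D,a]$ is bounded (so $[T^*,a]$ is) and $k\in\cA$ acts by a bounded operator.

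For the lower-left entry I would rewrite it as a sum of bona fide commutators with $T$. Specifically,
$$Tka - kak^{-1}Tk \; = \; [T,ka] + kaT - kak^{-1}Tk \; = \; [T,ka] + ka\,k^{-1}\bigl(kT-Tk\bigr) \; = \; [T,ka] \,-\, kak^{-1}[T,k].$$
Both $[T,ka]$ and $[T,k]$ are bounded since $[D,ka]$ and $[D,k]$ are bounded, and $kak^{-1}\in\cA$ is bounded, so the whole entry is bounded.

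I do not expect any genuine obstacle; the content of the lemma is really the observation that the particular placement of $k$ in the off-diagonal blocks of $D_{(k,\gamma)}$ is exactly what forces the twist to be the inner automorphism $\sigma(a)=kak^{-1}$, as opposed to some other modification. The only subtlety worth flagging is the algebraic identity $T-k^{-1}Tk = -k^{-1}[T,k]$ used above, which makes visible how the perturbation absorbs the deviation between $Tk$ and $kT$ into a bounded commutator; this is the step that justifies why the inner twist is forced upon us and makes the lemma more than a formal matter.
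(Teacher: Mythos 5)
Your proposal is correct and follows essentially the same route as the paper: write the twisted commutator as a $2\times 2$ matrix and check each off-diagonal entry, with the upper-right entry reducing to $k[T^*,a]$ exactly as in the paper. The only difference is cosmetic: for the lower-left entry the paper groups the terms as $[T,\sigma(a)]\,k$ (using $ka=\sigma(a)k$), whereas you split it into $[T,ka]-\sigma(a)[T,k]$; both rearrangements invoke the same hypotheses and are equally valid.
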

\proof
 Let us compute the twisted commutator $D_{(k,\gamma)} a-\sigma(a)D_{(k,\gamma)}$. One has
$$
D_{(k,\gamma)} a-\sigma(a)D_{(k,\gamma)}=\left(
          \begin{array}{cc}
            0 & kT^*  a\\
            Tka & 0 \\
          \end{array}
        \right)-\left(
          \begin{array}{cc}
            0 & kak^{-1}k T^*  \\
           kak^{-1} Tk & 0 \\
          \end{array}
        \right)
$$
 The upper right element of the matrix gives
$$
kT^*  a-kak^{-1} k T^*=k[T^*,a],
$$
which is bounded since $[D,a]$ is bounded as well as $k$.
 The lower left element of this matrix gives $$
 Tka-ka k^{-1} Tk=T b k-bT k=[T,b]k,\ \ b=\sigma(a),
  $$
  which is also bounded since $b=\sigma(a)\in\cA$.\endproof

  \begin{rem}\label{pertrem}{\rm To display the dependence on the grading $\gamma$ one can
  use the following formula for the perturbation $D_{(k,\gamma)}$ of $D$
  \begin{equation}\label{pertremform}
    D_{(k,\gamma)}=k^{ E}D k^{E}\,, \ \ E=\frac{1+\gamma}{2}.
  \end{equation}
  }\end{rem}

 We shall now explain why it is this simple twisting procedure which is appearing naturally when one introduces a Weyl factor (dilaton) in the geometry of the noncommutative torus. We still need another general notion of transposed spectral triple.

 \subsection{Transposed spectral triple} \hfill\medskip

Given a Hilbert space $\cH$ let $\bar \cH$ be the dual vector space. The transposition $T\mapsto T^t$ gives an antiisomorphism
\begin{equation}\label{antiiso}
    \cL(\cH)\to \cL(\bar \cH)^{\rm op}, \ T\mapsto T^t
\end{equation}
where $\cL(\bar \cH)^{\rm op}$ is the opposite algebra of $\cL(\bar \cH)$.
Thus one can associate to any
 spectral triple $(\cA,\cH,D)$ the transposed spectral triple as follows.
\begin{prop} \label{tiwst2} Let $(\cA,\cH,D)$ be a $\sigma$-twisted spectral triple. Let $\cA^{\rm op}$ be the opposite algebra and $D^t$ the transposed of the unbounded operator $D$. Let $\sigma'$ be the automorphism of $\cA^{\rm op}$ given by
\begin{equation}\label{sigmaprime}
   \sigma'(a^{\rm op})=(\sigma^{-1}(a))^{\rm op}
\end{equation}
Then the action of $\cA^{\rm op}$ in $\bar \cH$ transposed of the action of $\cA$ in $\cH$ defines a $\sigma'$-twisted spectral triple
 \begin{equation}\label{transposedtrip}
   (\cA^{\rm op}, \bar \cH,D^t)
 \end{equation}
 \end{prop}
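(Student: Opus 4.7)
The plan is to reduce the conclusion to the $\sigma$-twisted condition already assumed for $(\cA,\cH,D)$, exploiting the key anti-isomorphism identity $(ST)^t=T^tS^t$ underlying \eqref{antiiso}. First, I would make explicit how $\cA^{\rm op}$ acts on $\bar\cH$: if $\pi\colon\cA\to\cL(\cH)$ denotes the original representation, then $a^{\rm op}\mapsto\pi(a)^t$ is a bona fide algebra homomorphism $\cA^{\rm op}\to\cL(\bar\cH)$, precisely because transposition reverses multiplication and so does the opposite algebra. The map $\sigma'$ defined in \eqref{sigmaprime} is manifestly an automorphism of $\cA^{\rm op}$, since $\sigma^{-1}$ is an automorphism of $\cA$. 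The spectral requirements for $D^t$ transfer immediately from those of $D$: transposition identifies $\cL(\cH)$ with $\cL(\bar\cH)^{\rm op}$ and preserves spectral invariants, so $D^t$ is self-adjoint on $\bar\cH$ and shares with $D$ the compact resolvent property.

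The substantive step is the boundedness of the $\sigma'$-twisted commutator. For $a\in\cA$, set $b:=\sigma^{-1}(a)$, so that $\sigma(b)=a$. Using $(ST)^t=T^tS^t$ one computes in $\cL(\bar\cH)$:
\begin{equation*}
D^t\cdot a^t \;-\; \bigl(\sigma^{-1}(a)\bigr)^t\cdot D^t \;=\; (aD)^t-(Db)^t \;=\; (aD-Db)^t.
\end{equation*}
The $\sigma$-twisted spectral triple hypothesis applied to $b$ yields that $Db-\sigma(b)D=Db-aD$ is bounded on $\cH$; hence $aD-Db$ is bounded, and consequently its transpose is a bounded operator on $\bar\cH$. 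This is exactly the $\sigma'$-twisted commutator condition for $(\cA^{\rm op},\bar\cH,D^t)$.

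I do not anticipate a serious obstacle: the proposition is essentially a formal consequence of the anti-isomorphism structure of transposition combined with the definition of $\sigma'$. The only point requiring minor care is domain bookkeeping, since $D$ is unbounded; this is handled exactly as in the domain argument implicit in Lemma~\ref{tiwst1}, where stability of $\Dom(D)$ under the twisted action of $\cA$ (ensured by the boundedness of the twisted commutators) transposes directly to stability of $\Dom(D^t)$ under the action of $\cA^{\rm op}$ on $\bar\cH$.
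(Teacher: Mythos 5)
Your proof is correct and follows essentially the same route as the paper: both reduce the boundedness of the $\sigma'$-twisted commutator $D^t a^t-(\sigma^{-1}(a))^t D^t$ to that of $Da-\sigma(a)D$ via the identity $(ST)^t=T^tS^t$, the paper writing it as $-\left(D\sigma^{-1}(a)-aD\right)^t$ and you as $(aD-Db)^t$ with $b=\sigma^{-1}(a)$, which is the same expression. The additional remarks on the representation, self-adjointness, and domains are consistent with what the paper leaves implicit.
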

 \proof The boundedness of the twisted commutators $Da-\sigma(a)D$ implies the boundedness of the twisted commutators
 $$
 D^t a^t-(\sigma^{-1}(a))^t D^t=-\left( D \sigma^{-1}(a)-aD\right)^t.
 $$
 \endproof
 Note that one can identify the dual vector space $\bar \cH$ with the complex conjugate of $\cH$ by the antilinear isometry $J_\cH$
 \begin{equation}\label{jch}
    J_\cH(\eta)(\xi)=\langle \xi,\eta\rangle\qqq \xi ,\eta \in \cH.
 \end{equation}
 One then has the relation
 \begin{equation}\label{reltom}
    T^t=J_\cH T^* J_\cH^{-1}\qqq T\in  \cL(\cH).
 \end{equation}
 \begin{defn} \label{transposetrip}
Given a modular spectral triple $(\cA,\cH,D)$ the transposed modular spectral triple is given by  \eqref{transposedtrip}.
 \end{defn}

\subsection{Notations for $\T^2_\theta$} \hfill\medskip

Let us fix our notations for the noncommutative torus $\T^2_\theta$.
We let $\theta$ be an irrational real number  and
consider the (uniquely determined) $C^*$-algebra $A_{\theta} \equiv C^0(\T^2_\theta)$ generated by
two unitaries
\begin{equation*}
 U^* = U^{-1} \, , \qquad \ V^* = V^{-1} \, ,
\end{equation*}
which satisfy the multiplicative commutation relation
\begin{equation*}
VU = e^{2\pi i \theta} \, UV \, .
\end{equation*}
The $2$-dimensional torus $\T^2 = \left( \R/2\pi\Z\right)^2$
acts on $A_{\theta}$ via the 2-parameter group of automorphisms $\{ \alpha_\r \}$,
$\r \in \R^2$, determined by
\begin{equation*}
\alpha_\r (U^n \, V^m)=e^{i(r_1n + r_2m)}U^n \, V^m \, , \qquad
\r = (r_1, r_2) \in \R^2 \, .
\end{equation*}
We denote by $A_{\theta}^{\infty} \equiv C^\infty (\T^2_\theta)$ the subalgebra of
smooth elements for this action,
\ie consisting of those $x \in A_{\theta}$ such that the mapping
\begin{equation*}
\r \in \R^2 \, \mapsto \, \alpha_\r  (x) \in A_{\theta}
\end{equation*}
is smooth. Expressed in terms of the coefficients of the element $a\in A_{\theta}$,
 $$
 a=\sum_{(n,m) \in \Z^2} a (n,m)U^nV^m \, ,
 $$
 the smoothness condition amounts to their
 rapid decay, \ie the requirement that the sequences
 $\{ \vert n \vert^p \, \vert m \vert^q \, \vert a (n,m) \vert \}_{(n,m) \in \Z^2}$ be bounded for any
  $p, q > 0$.

The basic derivations representing the infinitesimal generators to the above group of automorphisms are given by  the defining relations,
\begin{equation} \label{defder}
\begin{split}
\delta_1 (U) \, &=   \, U \, , \quad \delta_1 (V) = 0 \, , \\
\delta_2 (U) \, &= \, 0 \, , \quad \delta_2 (V) =   \, V \, ;
\end{split}
\end{equation}
they are the counterparts of the differential operators $ \frac 1i\partial / \partial x$,
$ \frac 1i\partial / \partial y$ acting on $C^\infty (\T^2)$, and behave similarly
with respect to the $\ast$-involution:
\begin{equation} \label{invol}
\delta_j (a^\ast) =   \, - \delta_j (a)^\ast\, , \quad j = 1, 2  \qquad
\text{for all} \quad  a \in A_{\theta}^{\infty} \, .
\end{equation}

As $\theta$ was chosen irrational, there is a unique trace $\vp_0$ on $A_{\theta}$,
determined by the orthogonality properties
\begin{equation}\label{trace}
\vp_0 (U^n \, V^m) = 0 \quad \mbox{if} \quad (n,m) \ne (0,0) \, , \quad \mbox{and} \quad \vp_0 (1) = 1 \, ,
\end{equation}
and we denote by ${\cH_0}$  the Hilbert space obtained from $A_{\theta}$ by completing
 with respect to the associated inner product
\begin{equation}\label{inprod}
\langle a,b \rangle = \vp_0 (b^* a) \, , \qquad a,b \in A_{\theta} \, .
\end{equation}
By construction the Hilbert space ${\cH_0}$ is a bimodule over $A_{\theta}$ with
\begin{equation}\label{bim}
    a . \xi . b:= a\xi b \qqq a,b \in \cA, \ \xi \in \cH_0
\end{equation}
and the trace property of $\vp_0$ ensures that the right action of $\cA$ is unitary.

The derivations $\delta_1 , \delta_2$, viewed
as unbounded operators on ${\cH_0}$,
have unique self-adjoint extensions,
\begin{equation} \label{sadelta}
\delta_j^\ast \, =   \, \delta_j  \, , \quad j = 1, 2  \, .
\end{equation}
Furthermore, they obviously obey the integration-by-parts rule
\begin{equation}\label{int-by-parts}
\vp_0(a \delta_j(b)) + \vp_0 (\delta_j(a) b) \, = \, 0 \, , \qquad a,b \in A_{\theta}^\infty \, .
\end{equation}

\subsection{Conformal structures on $\T^2_\theta$} \hfill\medskip

The conformal structures on the classical torus are best parameterized by a complex number $\tau\in \C$, $\Im(\tau)>0$ modulo the natural action of $PSL(2,\Z)$ by homographic transformations. To $\tau$ one associates the lattice $\Gamma=\Z+\tau\Z\subset \C$ and the quotient complex structure on $\T^2\sim\C/\Gamma$. The natural isomorphism of the $2$-dimensional torus $\T^2 = \left( \R/2\pi\Z\right)^2$ (with real coordinates $(x,y)$ as above) with $\C/\Gamma$ is given by
\begin{equation}\label{isotor}
    (x,y)\in \left( \R/2\pi\Z\right)^2\mapsto Z= \frac{1}{2\pi}(x+y\tau)\in \C/\Gamma
\end{equation}
One thus gets
\begin{equation}\label{transp}
     \left(
       \begin{array}{c}
         dZ \\
         d\bar Z \\
       \end{array}
     \right)=\frac{1}{2\pi}\left(
               \begin{array}{cc}
                 1 & \tau \\
                 1 & \bar\tau \\
               \end{array}
             \right)
             \left(
               \begin{array}{c}
                 dx \\
                 dy \\
               \end{array}
             \right)
\end{equation}
This gives $\partial_Z$ and $\partial_{\bar Z}$ as linear expressions in $\partial_x$ and $\partial_y$ and one finds up to the overall factor $\lambda=\frac{2 \pi  \bar{\tau }}{-\tau +\bar{\tau }}$ that
\begin{equation}\label{dz}
    \partial_Z=\partial_x -\frac{1}{\bar{\tau }}\partial_y
\end{equation}
Since replacing the modulus $\tau$ by $-\frac{1}{\tau }$ does not affect the complex structure this allows us to transfer the translation invariant
complex structures of $\T^2$ to $\T^2_\theta$.
Throughout this paper we fix a
complex number $\tau \in \C$ with $\Im (\tau) > 0$ and consider the
associated translation invariant complex structure, defined by the pair of derivations
\begin{equation}\label{deltas}
\d = \delta_1 + \bar{\tau} \delta_2 \, , \qquad \d^\ast =\delta_1 + \tau \delta_2 \, ;
\end{equation}
representing the counterparts of the differential operators
$ \frac 1i \left(\partial / \partial x + \bar{\tau} \partial / \partial y\right)$, and
$ \frac 1i \left(\partial / \partial x + \tau \partial / \partial y\right)$ acting on $C^\infty (\T^2)$. Our conventions differ slightly from \cite{cc} in which the only case $\tau=i$ was covered but we prefer to follow the usual convention for the general case.

As explained in \cite[\S VI. 2]{Co-book}, the conformal (or equivalently, complex) structures
on a Riemann surface can be recast as solutions of a variational problem, for Polyakov action
functionals, involving positive currents in the sense of Lelong that represent
the fundamental class. Since Lelong positivity has a 
natural reformulation in terms of positivity in Hochschild
cohomology, the same type of construction can be extended to noncommutative spaces
with fundamental class. 

In particular (cf.  \cite[\S VI. 3]{Co-book}), for  $A_{\theta}^{\infty}$ 
the information on the conformal structure corresponding to the modulus $\tau$
is encapsulated in the positive Hochschild $2$-cocycle  
\begin{equation}\label{poshoch}
   \phi(a,b,c) \, =\, -\vp_0(a\, \d (b) \, \d^\ast(c)) , \quad
   a, b, c \in A_{\theta}^{\infty} \, ,
\end{equation}
which belongs to the intersection of the positive cone $Z_+^2 (A_{\theta}^{\infty})$
in Hochschild cohomology with the hyperplane $\frac{(\bar\tau-\tau)}{2} \vp_2 + b (\Ker B)$,
where $\vp_2$ is the generator of $HC^2(A_{\theta}^{\infty})$ given by 
\begin{equation}\label{genhc2}
   \vp_2(a,b,c) \, =\, \vp_0\left(a\, (\d_1 (b) \, \d_2(c)-\d_2 (b) \, \d_1(c))\right) , \quad
   a, b, c \in A_{\theta}^{\infty} \, .
\end{equation}
There is a canonical procedure (see \cite[\S VI. 3, Prop. 11]{Co-book}) for quantizing the
positive Hochschild cocycle $\phi$ thus obtained.
As analogue of the space of $(1,0)$-forms on the classical 2-torus
one takes the unitary bimodule
$\cH^{(1,0)}$ over $A_{\theta}^{\infty}$ given by the Hilbert space completion of the 
universal derivation bimodule $\Om^1(A_{\theta}^{\infty})$ 
of finite sums $\sum \, a \, d(b)$, $a,b \in A_{\theta}^{\infty}$, with respect to the inner product
\begin{equation}\label{onezero}
\langle a\, d(b) , a'\, d(b') \rangle = \vp_0 ((a')^* \, a \, \d (b) \, \d (b')^*) \, , \quad a,a',b,b' \in A_{\theta}^{\infty} \, .
\end{equation}
\begin{lem} \label{isoform}
The map $\psi:\cH^{(1,0)}\to \cH_0$,
\begin{equation}\label{isomap}
\cH^{(1,0)}\ni   \sum a d(b)  \mapsto  \sum a \d(b) \in \cH_0
\end{equation}
is a unitary $A_{\theta}^{\infty}$-bimodule isomorphism of $\cH^{(1,0)}$ with $\cH_0$.
\end{lem}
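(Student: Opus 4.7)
\proof[Proof plan]
The plan is to verify in turn that $\psi$ is well-defined and isometric on finite sums, extend by continuity, show that the image is dense, and finally check that the bimodule structure is preserved.

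\textbf{Step 1 (isometry on the pre-Hilbert space).} I would first observe that the inner product defining $\cH^{(1,0)}$ is precisely the pullback along $\psi$ of the inner product of $\cH_0$. Indeed, for any finite sums $\omega=\sum a\,\partial(b)$ and $\omega'=\sum a'\,\partial(b')$, the inner product in $\cH_0$ of $\psi(\omega)$ and $\psi(\omega')$ is
\[
\langle \sum a\d(b),\sum a'\d(b')\rangle_{\cH_0}
=\sum \vp_0\!\bigl(\d(b')^*(a')^* a\,\d(b)\bigr),
\]
and using the trace property of $\vp_0$ to cycle $\d(b')^*$ to the right, this equals $\sum\vp_0\bigl((a')^* a\,\d(b)\,\d(b')^*\bigr)$, which is the defining inner product of $\cH^{(1,0)}$. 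In particular $\|\psi(\omega)\|_{\cH_0}=\|\omega\|_{\cH^{(1,0)}}$, so $\psi$ is well-defined (independent of the representation of $\omega$ as a finite sum) and isometric.

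\textbf{Step 2 (extension and density).} The isometry $\psi$ extends uniquely to an isometry of the Hilbert space completions. To obtain unitarity it suffices to show that the image is dense in $\cH_0$. For this I would exhibit every basis vector $U^n V^m$ as $\psi$ of a finite sum. Since $\d(U^nV^m)=(n+\bar\tau m)U^nV^m$ and $\Im(\bar\tau)\neq 0$, we have $n+\bar\tau m\neq 0$ for $(n,m)\neq (0,0)$, so
\[
U^nV^m = \psi\!\left(\frac{1}{n+\bar\tau m}\,\partial(U^nV^m)\right),\qquad (n,m)\neq (0,0).
\]
For the remaining basis vector $1$, the Leibniz rule applied to $UU^{-1}=1$ gives $\d(U^{-1})=-U^{-1}$, hence $1=-\psi(U\,\partial(U^{-1}))$. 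This exhibits a dense subspace of $\cH_0$ inside the image.

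\textbf{Step 3 (bimodule property).} The left $A_{\theta}^\infty$-action on $\cH^{(1,0)}$ is $c\cdot(a\,\partial(b))=(ca)\,\partial(b)$, which is sent by $\psi$ to $(ca)\d(b)=c\cdot\psi(a\,\partial(b))$. The right action is determined by the Leibniz rule $\partial(b)\cdot d=\partial(bd)-b\,\partial(d)$, and
\[
\psi\bigl(\partial(b)\cdot d\bigr)=\d(bd)-b\,\d(d)=\d(b)\,d+b\,\d(d)-b\,\d(d)=\d(b)\,d,
\]
so that $\psi(a\,\partial(b)\cdot d)=a\,\d(b)\,d=\psi(a\,\partial(b))\cdot d$. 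These identities on finite sums extend by continuity to the completions, yielding the full bimodule intertwining.

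\textbf{Main obstacle.} The only subtle point is Step~1, where one must keep in mind that elements of $\cH^{(1,0)}$ are formal finite sums and that two different such sums can represent the same class. The trace-property computation above shows that any two sums with the same image under $\psi$ automatically have the same $\cH^{(1,0)}$-norm, so the map is well-defined and isometric simultaneously — this is the conceptual heart of the lemma. \endproof
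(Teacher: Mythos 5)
Your proof is correct and follows essentially the same route as the paper's: the isometry is immediate from the definition of the inner product on $\cH^{(1,0)}$ (via the trace property of $\vp_0$), the bimodule intertwining follows from the Leibniz rule for $\d$, and surjectivity comes from inverting $\d$ on the generators. The only cosmetic difference is that the paper gets surjectivity in one line from $\d(U)=U$, namely $\psi(aU^{-1}\partial U)=a$ for arbitrary $a$, whereas you exhibit each basis vector $U^nV^m$ separately; both are fine.
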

\proof
By definition of the inner product on $\cH^{(1,0)}$ the operator is unitary and the derivation property of $\d$ shows that it is an $A_{\theta}^{\infty}$-bimodule map. It remains to check that it is surjective. One has
$\d(U)=(\delta_1 + \bar{\tau} \delta_2)(U)=U$ and thus $\psi(aU^{-1}\partial U)=a$ which gives the required surjectivity. \endproof

When viewed as an unbounded operator from $\cH_0$ to $\cH^{(1,0)}$, the operator
$\d$ will be called $\partial$.

\subsection{Conformal changes of metric} \hfill\medskip

In order to implement conformal changes of metric,
we consider the family of positive linear functionals parameterized by
self-adjoint elements $h=h^* \in A_{\theta}^{\infty}$,  $\varphi = \varphi_h$, defined by
\begin{equation} \label{confstate}
\varphi (a) = \vp_0 (ae^{-h}) \, , \quad a \in A_{\theta} \, .
\end{equation}
\begin{defn}
{\em We shall call a positive linear functional $\vp$ on $A_{\theta}$
as in \eqref{confstate} a} conformal weight with
Weyl factor $e^{-h}$ and dilaton $h$. {\em The normalized functional }
\begin{equation} \label{confstate1}
\varphi_n (a) = \frac{\vp_0 (ae^{-h})}{\vp_0 (e^{-h})} \, , \quad a \in A_{\theta} \, .
\end{equation}
{\em is called the} associated conformal state.
\end{defn}

Each conformal weight $\vp$ determines an inner product $ \langle \ , \ \rangle_\varphi$ on $A_{\theta}$,
namely
\begin{equation}\label{confstate2}
\langle a,b\rangle_\varphi = \varphi (b^* a) \, , \quad a,b \in A_{\theta} \, .
\end{equation}
We let $\cH_\varphi$ denote the Hilbert space completion of $A_{\theta}$ for the inner product
$ \langle \ , \ \rangle_\varphi$. It is a unitary left module on $A_{\theta}$ by construction.
Note that, whereas for $\vp_0$ we have the trace relation
\begin{equation*}
\vp_0 (b^* a) = \vp_0 (ab^*) \, , \quad a,b \in A_{\theta} \, ,
\end{equation*}
the functional $\varphi$ satisfies instead
\begin{equation}\label{confstate3}
\varphi (ab) = \varphi (b e^{-h} ae^{h}) = \varphi (b \sigma_i \, (a)) \, , \quad a \in A_{\theta} \, ,
\end{equation}
which is  the KMS condition at $\beta=1$ for the 1-parameter group
 $\sigma_t$, $t \in {\mathbb R}$, of inner automorphisms
\begin{equation*}
\sigma_t (x) = e^{ith} x e^{-ith}
\end{equation*}
Equivalently, $\sigma_t=\modu^{-it}$  where the modular operator $\modu$, given by
\begin{equation*}
\modu (x) = e^{-h} x e^h \, , \qquad x \in A_{\theta}
\end{equation*}
is positive and fulfills
\begin{equation}\label{modoper}
    \langle \modu^{1/2}x, \modu^{1/2}x  \rangle_\varphi=\langle x^*,x^*   \rangle_\varphi
    \qqq \, x\in A_{\theta}.
\end{equation}
The infinitesimal generator of the
$1$-parameter group $\sigma_t$ is the inner derivation $-\lmod$,
\begin{equation*}
-\lmod(x)=-\log \modu(x) = [h,x] \, , \quad x \in A_{\theta}^{\infty} \, .
\end{equation*}
To correct the lack of
unitarity of the action of $A_{\theta}$ on $\cH_\varphi$ by right multiplication, one replaces it by the right action
 \begin{equation}\label{tomita}
 a \in A_{\theta} \mapsto a^{\rm op} := J_\varphi a^* J_\varphi \in \cL (\cH_\vp),
  \end{equation}
 where  $J_\varphi$ is the Tomita antilinear unitary of the GNS representation
 associated to $\vp$; explicitly, with $k=e^{h/2}$,
\begin{equation}\label{tomitaJ}
 J_\vp (a) \, = \modu^{1/2}( a^\ast)=\, k^{-1} a^\ast k  \qqq \, a \in A_{\theta} .
\end{equation}
One thus gets
\begin{equation}\label{tomita1}
    a^{\rm op}\xi =\xi k^{-1} a k \qqq a, \xi \in A_{\theta}^{\infty}.
\end{equation}
The obtained unitary $A_{\theta}^{\infty}$-bimodule is isomorphic to $\cH_0$,
\begin{lem} \label{isophi}
The right multiplication by $k$,
\begin{equation*}
    R_k a=ak  \qqq \, a\in A_{\theta}
\end{equation*}
 extends to an isometry $W :\cH_0 \rightarrow \cH_\varphi$
and gives a unitary $A_{\theta}^{\infty}$-bimodule isomorphism of  $\cH_0$ with $\cH_\varphi$.
\end{lem}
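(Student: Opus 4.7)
The plan is to verify directly that the right multiplication operator $W = R_k$, with $k = e^{h/2}$, preserves inner products, has dense range, and intertwines the left and right actions. Everything reduces to algebraic manipulations that exploit the trace property of $\vp_0$ and the explicit formulas \eqref{tomita} and \eqref{tomita1}.

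First I would verify the isometry property. For $a, b \in A_\theta^\infty$, compute
\[
\langle Wa, Wb\rangle_\vp = \vp\bigl((bk)^*(ak)\bigr) = \vp(kb^*ak) = \vp_0(kb^*ak \cdot e^{-h}) = \vp_0(kb^*ak^{-1}),
\]
since $e^{-h} = k^{-2}$. Now the trace property of $\vp_0$ allows cycling $k^{-1}$ to the left, giving $\vp_0(b^*a) = \langle a, b\rangle_0$. Hence $W$ is isometric on $A_\theta^\infty$ and therefore extends to an isometry $W:\cH_0 \to \cH_\vp$.

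Next I would establish surjectivity. Since $k = e^{h/2}$ is self-adjoint and invertible in $A_\theta^\infty$, for any $b \in A_\theta^\infty$ we have $b = W(b k^{-1})$, so the range of $W$ contains the dense subspace $A_\theta^\infty \subset \cH_\vp$. Combined with isometry, this makes $W$ unitary.

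Finally I would check the bimodule compatibility. For the left action, $W(a\xi) = (a\xi)k = a(\xi k) = a \cdot W(\xi)$, which is immediate from associativity. For the right action, recall that on $\cH_\vp$ it is given by $b^{\rm op}\eta = \eta k^{-1} b k$ according to \eqref{tomita1}, whereas on $\cH_0$ it is just $\xi \cdot b = \xi b$. Thus
\[
W(\xi \cdot b) = (\xi b) k \qquad \text{and} \qquad b^{\rm op} W(\xi) = (\xi k)\, k^{-1} b k = \xi b k,
\]
so the two coincide. This completes the proof that $W$ is a unitary bimodule isomorphism.

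No step presents a real obstacle; the only point that requires minor care is keeping track of which action is being used on each side (the cycling under $\vp_0$ in the isometry check, and the Tomita twist in the right action). The underlying reason the calculation works is that the relation $e^{-h} = k^{-2}$ together with the trace property precisely offsets the KMS-twist \eqref{confstate3} built into $\vp$.
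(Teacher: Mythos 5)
Your proof is correct and follows essentially the same route as the paper's: the same isometry computation using $e^{-h}=k^{-2}$ and the trace property of $\vp_0$, and the same verification that $W$ intertwines the left action trivially and the Tomita-twisted right action via $(\xi k)k^{-1}bk=\xi bk$. The only difference is that you spell out surjectivity explicitly (via $b=W(bk^{-1})$), which the paper leaves implicit.
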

\proof One has
for any
 $a,b \in A_{\theta}$,
\begin{align*}
\langle  R_k(a), R_k(b)\rangle_\vp \, =\, \vp_0((bk)^*(ak)k^{-2}) \, = \,
\vp_0(b^*a)\, = \, \langle a, b\rangle.
\end{align*}
This shows that $W$ is an isometry. By construction it intertwines the left module structures. Moreover one has, using \eqref{tomita1},
$$
W(\xi a)=\xi a k=\xi k k^{-1} a k=W(\xi)k^{-1} a k= a^{\rm op}W(\xi)\qqq a, \xi \in A_{\theta}^{\infty}.
$$
This shows that $W$ intertwines the right module structures.\endproof

\subsection{Modular spectral triples on $\T^2_\theta$} \label{spectrip}
 \hfill\medskip

With the complex structure associated to $\tau \in \C$, $\Im (\tau) > 0$ fixed, the operator associated to the flat metric in the
corresponding conformal class on $\T^2_\theta$ is given by
\begin{equation}\label{eq:spectrip0}
 D= \left(
  \begin{array}{cc}
    0 & \partial^*\\
    \partial & 0\\
  \end{array}
\right)    \quad  \text{acting on } \quad \tilde{\cH} = \cH_0 \oplus \cH^{(1,0)} \, .
\end{equation}
In other words, this is the natural $\T^2_\theta$ version
of the $(\partial + \partial^*)$-operator, which
is isospectral to the usual Spin$_c$ Dirac operator
on the ordinary torus $\T^2$. The left and right actions for the unitary $A_\theta$-bimodule structure of $\tilde{\cH}$ both
give spectral triples. One can take the transpose in the sense of Definition \ref{transposetrip} of the  spectral triple $(A_\theta^{\rm op},\tilde{\cH},D)$ given by the right action of $A_\theta$. This transposed triple is isomorphic to the spectral triple given by the left action of $A_\theta$ in $\bar\cH=\cH_0 \oplus \cH^{(0,1)}$ and the operator
\begin{equation}\label{eq:spectrip0bis}
\bar D= \left(
  \begin{array}{cc}
    0 & \bar\partial^*\\
    \bar\partial & 0\\
  \end{array}
\right)    \quad  \text{acting on }  \cH_0 \oplus \cH^{(0,1)} \, .
\end{equation}
If one disregards the grading $\gamma$ the spectral triples $(A_\theta,\tilde{\cH},D)$
and $(A_\theta,\bar{\cH},\bar D)$ are equivalent but this does not hold as graded spectral triples and in fact the equivalence reverses the grading. One can see this distinction even in the commutative case by looking at the equation
$$
a[D,b]E=0\, , \ a,b\in \cA, \ \ E=\frac{1+\gamma}{2},
$$
which is fulfilled when $b$ is antiholomorphic.

We now
perform a non-trivial conformal change of metric on $\T^2_\theta$.
Let the conformal weight $\vp$ be as above.  The varying structure comes from the operator $\partial_\vp$
  which is given by  $\partial$ on $A_{\theta}^{\infty}$ but is  viewed as an unbounded operator from $\cH_\varphi$ to $\cH^{(1,0)}$,
  \begin{equation}\label{parti}
    \partial_\vp:A_{\theta}^{\infty}\subset\cH_\varphi\to \cH^{(1,0)}, \ \ \partial_\vp(a)=\partial(a)\qqq a\in A_{\theta}^{\infty}.
  \end{equation}
  In order to form the corresponding spectral triple we consider the operator
\begin{equation}\label{eq:spectrip}
 D_\vp= \left(
  \begin{array}{cc}
    0 & \partial_\vp^*\\
    \partial_\vp & 0\\
  \end{array}
\right)    \quad  \text{acting on } \quad \tilde{\cH}_\vp = \cH_\vp \oplus \cH^{(1,0)} \, ,
\end{equation}
 where we view  $\tilde{\cH}_\vp = \cH_\vp \oplus \cH^{(1,0)}$ both as a left module and a right module over  $A_{\theta}^{\infty}$.
Lemmas \eqref{isoform} and \eqref{isophi} show that an $A_{\theta}^{\infty}$-bimodule $\tilde{\cH}_\vp$ is isomorphic to $\cH=\cH_0\oplus \cH_0$ by the  unitary map
\begin{equation}\label{unitar}
    \tilde W(\xi,\eta)=(W(\xi), \psi^{-1}\eta)\in \cH_\vp \oplus \cH^{(1,0)}\qqq \xi,\eta \in \cH_0.
\end{equation}
Let $J$   denote
 the Tomita anti-unitary operator on $\cH_0$ extending the star involution
  $a \mapsto a^*$, $a\in A_{\theta}$. We let
   \begin{equation}\label{jtilde}
    \tilde J=
    \left(
      \begin{array}{cc}
        J & 0 \\
        0 & -J \\
      \end{array}
    \right)
   \end{equation}
    the direct sum of two copies of $\pm J$ acting in $\cH_0\oplus \cH_0$.
\begin{lem}\label{unitlem}
Let $k=e^{h/2}$, where $h=h^\ast \in A_{\theta}^\infty$ is the dilaton
of the conformal weight $\vp$.  We let $R_k$ denote the right multiplication by $k$ in $\cH_0$.
 \begin{itemize}
\item[(i)] The operator $\tilde W^* D_\vp \tilde W$ is equal to the self-adjoint unbounded operator
\begin{equation}\label{selfad}
    \tilde W^* D_\vp \tilde W= \left(
  \begin{array}{cc}
    0 & R_k \delta^*\\
    \delta R_k & 0\\
  \end{array}
\right) \,, \ \ \delta=\delta_1+\bar \tau \delta_2\,, \ \delta^*=\delta_1+ \tau \delta_2.
\end{equation}
\item[(ii)] The operator $\tilde J\tilde W^* D_\vp \tilde W \tilde J$ is equal to the self-adjoint unbounded operator
\begin{equation}\label{selfadbis}
   \tilde J\tilde W^* D_\vp \tilde W \tilde J= \left(
  \begin{array}{cc}
    0 & k \delta\\
    \delta^* k & 0\\
  \end{array}
\right)
\end{equation}
\end{itemize}
\end{lem}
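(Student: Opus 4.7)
\medskip

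\textbf{Plan for the proof of Lemma \ref{unitlem}.}

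The strategy is purely computational: unwind the definitions of $\tilde W$, $W$, $\psi$, and $\partial_\vp$ to conjugate $D_\vp$, then handle the conjugation by $\tilde J$ using the Tomita properties of $J$ together with the interaction of $J$ with the basic derivations.

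For part (i), since $D_\vp$ is off-diagonal and $\tilde W = W \oplus \psi^{-1}$, the matrix $\tilde W^\ast D_\vp \tilde W$ is again off-diagonal, with lower-left entry $\psi\, \partial_\vp\, W$ and upper-right entry its adjoint. On a smooth element $a\in A_\theta^\infty\subset\cH_0$, the definition of $W$ gives $Wa = ak$, the definition \eqref{parti} gives $\partial_\vp(ak)=\partial(ak)$, and Lemma \ref{isoform} gives $\psi\,\partial(ak)=\delta(ak)=\delta R_k(a)$; thus $\psi\,\partial_\vp W=\delta R_k$. Taking the adjoint in $\cH_0$ and using $R_k^\ast=R_k$ (which follows from the trace property of $\vp_0$ and the self-adjointness of $k$: $\langle R_k\xi,\eta\rangle=\vp_0(\eta^\ast \xi k)=\vp_0(k\eta^\ast \xi)=\langle\xi,R_k\eta\rangle$) yields the upper-right entry $R_k\delta^\ast$, establishing \eqref{selfad}.

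For part (ii), I conjugate the matrix from (i) by $\tilde J=\mathrm{diag}(J,-J)$. The off-diagonal entries pick up a minus sign from the two $J$'s with opposite signs, giving $-JR_k\delta^\ast J$ and $-J\delta R_k J$. The two ingredients needed are: first, $JR_kJ=L_k$, the left multiplication by $k$. This follows directly from the antilinearity of $J$ and $Ja=a^\ast$: for $a\in A_\theta$, $JR_kJa=J(a^\ast k)=(a^\ast k)^\ast=ka$. Second, $J\delta J=-\delta^\ast$ and $J\delta^\ast J=-\delta$; this follows from $\delta=\delta_1+\bar\tau\delta_2$ together with the involution rule \eqref{invol}, since
\begin{equation*}
J\delta Ja \,=\, \bigl(\delta_1(a^\ast)+\bar\tau\,\delta_2(a^\ast)\bigr)^\ast \,=\, -\delta_1(a)-\tau\,\delta_2(a) \,=\, -\delta^\ast(a).
\end{equation*}
Combining: $-JR_k\delta^\ast J = -(JR_kJ)(J\delta^\ast J)= -L_k(-\delta)=k\delta$, and $-J\delta R_k J=-(J\delta J)(JR_kJ)=-(-\delta^\ast)L_k=\delta^\ast k$, which is precisely \eqref{selfadbis}.

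The only subtlety is in bookkeeping: one must be careful with the antilinearity of $J$ when splitting composite conjugations into products (the identity $\tilde J M\tilde J = (\tilde J\text{ applied entrywise})$ relies on $J^2=1$ on $\cH_0$ and is fine), and with the sign conventions in \eqref{invol} that deliver the minus sign in $J\delta J=-\delta^\ast$. Aside from these, the proof is a direct unwinding of Lemmas \ref{isoform} and \ref{isophi}.
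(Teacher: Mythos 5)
Your proposal is correct and follows essentially the same route as the paper: part (i) by unwinding $\tilde W^\ast D_\vp \tilde W$ to identify the lower-left entry with $\psi\,\partial_\vp W = \delta R_k$ (the upper-right being its adjoint), and part (ii) by conjugating with $\tilde J$ and invoking the compatibility \eqref{invol} of the involution with the derivations. You merely spell out the details (the sign bookkeeping from $\tilde J=\mathrm{diag}(J,-J)$, the identities $JR_kJ=L_k$ and $J\delta J=-\delta^\ast$) that the paper's one-line justification of (ii) leaves implicit.
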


\proof Let $\xi \in A_{\theta}^\infty\subset \cH_0$. One has $W(\xi)=\xi k=R_k\xi\in \cH_\vp$ and $\partial_\vp W(\xi)=\partial\circ R_k \xi$. Thus
$$
\psi(\partial_\vp W(\xi))=(\delta \circ R_k) \xi
$$
which gives the first statement. The second statement follows from the compatibility \eqref{invol} of the star operation with the derivations $\delta_j$. \endproof

\begin{cor}\label{speclem}
Let $k=e^{h/2}$, with $h=h^\ast \in A_{\theta}^\infty$ the dilaton
of the conformal weight $\vp$.
 \begin{itemize}
\item[(i)] The left action of $A_{\theta}$ on $\tilde{\cH}_\vp$ together with the operator
$D_\vp$ yield a graded spectral triple $(A_{\theta},\tilde{\cH}_\vp, D_\vp)$.

\item[(ii)]  The right action $a\mapsto a^{\rm op}$ of $A_{\theta}$ on $\tilde{\cH}_\vp$
together with the operator $D_\vp$ yield a graded twisted spectral triple
$(A^{\rm op}_{\theta},\tilde{\cH}_\vp ,D_\vp)$, with bounded twisted commutators
\begin{equation}\label{rightbounded}
   D_\vp \,a^{\rm op} - (k^{-1}ak)^{\rm op} D_\vp  \in \cL (\tilde{\cH}_\vp) \, \qqq \,
   a\in A^\infty_{\theta} .
\end{equation}
\item[(iii)] The transposed of the modular spectral triple $(A^{\rm op}_{\theta},\tilde{\cH}_\vp ,D_\vp)$ is isomorphic to the perturbed spectral triple
    \begin{equation}\label{perttrip}
        (A_{\theta},\cH, \dvp), \ \dvp=\left(
  \begin{array}{cc}
    0 & k \delta\\
    \delta^* k & 0\\
  \end{array}
\right)\sim (\bar D)_{(k,\gamma)}\,.
    \end{equation}
\end{itemize}
\end{cor}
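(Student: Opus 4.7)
The plan is to transport the analysis to the flat Hilbert space $\cH = \cH_0 \oplus \cH_0$ via the unitary bimodule isomorphism $\tilde W$ of \eqref{unitar}, reducing all three statements to the inner-twisting framework of Lemma \ref{tiwst1}, the transposition device of Proposition \ref{tiwst2}, and the explicit identifications of $D_\vp$ supplied by Lemma \ref{unitlem}. Under $\tilde W$ the curved Dirac operator becomes the operator of \eqref{selfad}, and further anti-unitary conjugation by $\tilde J$ of \eqref{jtilde} produces the operator $\bar D_\vp$ of \eqref{selfadbis}, which is precisely the inner twist $(\bar D)_{(k,\gamma)}$ of \eqref{dk}.

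For part (i), since $\tilde W$ intertwines the left $\cA$-module structures, it suffices to verify that the right-hand side of \eqref{selfad} defines a spectral triple for the ordinary left action of $\cA$. Left multiplication by $a \in A_\theta^\infty$ commutes with the right multiplication $R_k$, so the ordinary commutator decomposes into bounded multiplications of the form $\delta(a)R_k$ and $R_k\delta^*(a)$. The compact resolvent property is inherited from the flat case because $k = e^{h/2}$ and $k^{-1}$ are bounded: modulo lower-order terms involving derivatives of $k$, the square of this operator is comparable to $R_{k^2}^{1/2}(\delta^*\delta \oplus \delta\delta^*)R_{k^2}^{1/2}$, whose resolvent is compact.

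Parts (ii) and (iii) are intertwined via the inner twist identification. Lemma \ref{unitlem}(ii) matches $D_\vp$ (up to conjugation by $\tilde J\tilde W$) with $(\bar D)_{(k,\gamma)}$, so Lemma \ref{tiwst1} immediately yields a $\sigma$-twisted spectral triple for the \emph{left} action of $\cA$ on $\cH$ with $\sigma(a) = kak^{-1}$. Conjugation by $\tilde J$ converts this left action into the right action $a\mapsto a^{\rm op}$ of \eqref{tomita} on $\tilde{\cH}_\vp$ and transforms the twist into its opposite-algebra counterpart $(k^{-1}ak)^{\rm op}$, giving exactly \eqref{rightbounded} and establishing (ii). For (iii), the transposition formula \eqref{reltom} combined with self-adjointness of $D_\vp$ yields $D_\vp^t = \tilde J\,\tilde W^* D_\vp \tilde W\,\tilde J$ under the identifications of \eqref{unitar} and \eqref{jtilde}; Lemma \ref{unitlem}(ii) then identifies this operator with $\bar D_\vp$, and Proposition \ref{tiwst2} together with \eqref{sigmaprime} shows that the transposed twist matches the left-action twist $a\mapsto kak^{-1}$ produced by Lemma \ref{tiwst1} applied directly to $\bar D$. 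The only delicate point, and the main bookkeeping obstacle, is the compatibility of anti-unitaries, opposite algebras, and twists: one must track the sign flip in the second summand of $\tilde J$ in \eqref{jtilde} to ensure the grading is preserved, follow the powers of $k$ versus $k^{-1}$ carefully when passing between $\cH_\vp$ and $\cH_0$, and verify that the twist transforms as $\sigma^{-1}$ rather than $\sigma$ under transposition, in accordance with \eqref{sigmaprime}.
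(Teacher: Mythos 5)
Your proposal is correct and follows essentially the same route as the paper: part (i) via the derivation property (you check the commutator after transporting by $\tilde W$, while the paper checks $[\partial_\vp,a]$ directly using the equivalence of the norms $\Vert\cdot\Vert_\vp$ and $\Vert\cdot\Vert_0$ --- the same computation), and parts (ii)--(iii) by combining Lemma \ref{unitlem}(ii) with \eqref{reltom}, Lemma \ref{tiwst1} and Proposition \ref{tiwst2}. The bookkeeping you flag (the sign in $\tilde J$, the powers of $k$, and the passage from $\sigma$ to $\sigma^{-1}$ under transposition as in \eqref{sigmaprime}) is precisely what the paper's one-line citations compress, and it does work out as you describe.
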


\proof (i) In order to show that $[D_\vp, \, a]$ is bounded, it suffices
to check that $[\partial_\varphi,a]$ is bounded. In turn, the latter easily
follows from the derivation property of $\partial_\varphi$ and the equivalence
of the norms $\Vert.\Vert_\varphi$ and $\Vert.\Vert_0$.
\medskip

(ii)   This follows from Lemma \ref{tiwst1} and the third statement which we now prove.

(iii) This follows from the second statement of Lemma \ref{unitlem} using \eqref{reltom}.
\endproof

By Corollary \ref{speclem},  the transposed of the modular spectral triple $(A^{\rm op}_{\theta},\tilde{\cH}_\vp ,D_\vp)$ is simply given by the left action of $A_\theta$ on $\cH=\cH_0\oplus \cH_0$ and the operator
\begin{equation}\label{modspectrip}
\dvp=\left(
  \begin{array}{cc}
    0 & k \delta\\
    \delta^* k & 0\\
  \end{array}
\right)
\end{equation}

\begin{defn}\label{define}  The modular spectral triple of weight $\vp$ is
\begin{equation}\label{modspectrip1}
   (A_{\theta}^{\infty}, \cH, \dvp)
\end{equation}
where $\cH=\cH_0\oplus \cH_0$ as a left $A_{\theta}^{\infty}$-module and $\dvp$ is given by
\eqref{modspectrip}.
\end{defn}

\subsection{Laplacians on $\T^2_\theta$} \label{sectlap}
 \hfill\medskip

The spectral invariants of the modular spectral triple of weight $\vp$ are obtained by computing zeta functions and heat expansions, \ie traces of products of an element of $A_\theta$ (acting on the left) by a function of ${\dvp}^2$. Let $\lapa$ be
 the Dolbeault-Laplace operator
for the flat metric,
\begin{align} \label{lap0}
  \lapa\, = \, \delta\, \delta^*
  \, = \,\delta_1^2\, +\, 2 \Re(\tau)\delta_1\delta_2 \, + \, |\tau|^2 \delta_2^2 \,
\end{align}
acting on functions on $\T^2_\theta$.

\begin{lem}\label{laplem}
Let $k=e^{h/2}$, where $h=h^\ast \in A_{\theta}^\infty$ is the dilaton
of the conformal weight $\vp$.
 \begin{itemize}
\item[(i)] One has
\begin{equation}\label{dprimesq}
    \dvp^2=\left(
  \begin{array}{cc}
    k \lapa k & 0\\
    0 & \pertc\\
  \end{array}
\right)\,,  \ \lapa =\delta\,\delta^*, \ \pertc = \delta^* k^2\delta
\end{equation}
\item[(ii)]  The Laplacian  on functions is anti-unitarily equivalent to $\perta=k\lapa k$.
\item[(ii)] The operator $\pertc = \delta^* k^2\delta$  is anti-unitarily equivalent to the Laplacian $\pertb$ on forms of type $(1,0)$.
\end{itemize}
\end{lem}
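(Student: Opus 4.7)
My plan is to proceed in three short steps, each essentially a bookkeeping consequence of the setup already assembled.

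\textbf{Step 1 (part (i)).} I would square the explicit block matrix in \eqref{modspectrip}: a direct multiplication gives
\[
\dvp^2 \;=\; \left(\begin{array}{cc} k\,\delta\delta^\ast\, k & 0 \\ 0 & \delta^\ast k^2 \delta \end{array}\right),
\]
and reading off the diagonal entries --- using $\lapa = \delta\delta^\ast$ from \eqref{lap0} --- yields $\perta = k\lapa k$ on the upper-left and $\pertc = \delta^\ast k^2 \delta$ on the lower-right. This is exactly (i).

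\textbf{Step 2 (reduction of (ii) and (iii) to a block comparison).} By Definition \ref{define} and Corollary \ref{speclem}(iii), the operator $\dvp$ on $\cH = \cH_0\oplus\cH_0$ is obtained from $D_\vp$ on $\tilde\cH_\vp = \cH_\vp\oplus\cH^{(1,0)}$ by conjugation with the anti-unitary intertwiner $\tilde J\tilde W^\ast$ supplied by Lemma \ref{unitlem}(ii). Squaring preserves this conjugation, so
\[
\dvp^2 \;=\; (\tilde J\tilde W^\ast)\, D_\vp^2\, (\tilde W\tilde J).
\]
Since $D_\vp^2$ is block-diagonal with entries $\partial_\vp^\ast \partial_\vp$ and $\partial_\vp \partial_\vp^\ast$, and both $\tilde W$ (see \eqref{unitar}) and $\tilde J$ (see \eqref{jtilde}) are block-diagonal with respect to the orthogonal decompositions of source and target, each diagonal block of $D_\vp^2$ is anti-unitarily equivalent to the corresponding diagonal block of $\dvp^2$ computed in Step 1.

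\textbf{Step 3 (identifying the blocks).} Finally I would observe that the upper-left entry $\partial_\vp^\ast \partial_\vp$ of $D_\vp^2$ is by definition the Laplacian on functions in the conformal geometry of weight $\vp$, while the lower-right entry $\partial_\vp\partial_\vp^\ast$ is exactly $\pertb$, the Laplacian on $(1,0)$-forms. Matching these with the blocks produced in Step 1 yields the anti-unitary equivalences asserted in (ii) and (iii).

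No substantive obstacle arises; the only point requiring mild care is the block-diagonality of $\tilde J\tilde W^\ast$, which is immediate from the definitions in \eqref{unitar} and \eqref{jtilde} and ensures that the single anti-unitary conjugation decouples into two separate anti-unitary equivalences, one for each diagonal block.
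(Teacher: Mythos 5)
Your proposal is correct and follows essentially the same route as the paper, which simply cites Corollary \ref{speclem}: you square the block matrix \eqref{modspectrip} for part (i) and transport the block-diagonal operator $D_\vp^2$ through the anti-unitary $\tilde W\tilde J$ of Lemma \ref{unitlem}(ii) to identify the two diagonal blocks with the Laplacians on functions and on $(1,0)$-forms. The only remark is that your verification that the conjugation decouples blockwise (block-diagonality of $\tilde W$ and $\tilde J$) is exactly the content the paper leaves implicit, so nothing is missing.
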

\proof This follows from Corollary \ref{speclem}.\endproof

\begin{lem} \label{lem:3zetas}
Let $\vp$ be a conformal weight with dilaton $h = h^\ast \in A_\theta^\infty$.
 The zeta function of the Laplacian on functions is equal to the zeta functions of the   operators $\perta$, $\pertb$ and $\pertc$:
\begin{align} \label{eq:3zetas}
\zeta_{\perta}(z) \, = \, \zeta_{\pertb}(z)
 \, = \,\zeta_{\pertc}(z)= \zeta_{k \lapa k} (z) .
\end{align}
\end{lem}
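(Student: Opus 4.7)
The plan is to collapse all four zeta functions onto a single isospectral pair. Two of the three equalities in \eqref{eq:3zetas} are essentially tautological. First, by definition $\perta = k\lapa k$ (part (ii) of Lemma \ref{laplem} together with \eqref{dprimesq}), so $\zeta_{\perta}(z) = \zeta_{k\lapa k}(z)$; moreover Lemma \ref{laplem}(ii) asserts that the Laplacian on functions is anti-unitarily equivalent to $\perta$, so the two share a common zeta function. Similarly, Lemma \ref{laplem}(iii) gives $\zeta_{\pertb}(z) = \zeta_{\pertc}(z)$ by anti-unitary equivalence. These reductions leave only one substantive identity to verify, namely $\zeta_{\perta}(z) = \zeta_{\pertc}(z)$.

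For this the key observation is supplied by Lemma \ref{laplem}(i): $\dvp^2$ is block diagonal with entries $\perta$ and $\pertc$. Setting
\begin{equation*}
T \, := \, k\delta, \qquad T^{\ast} \, = \, \delta^{\ast} k,
\end{equation*}
viewed as closed densely defined operators between the two copies of $\cH_0$ forming $\cH$, one reads off
\begin{equation*}
\perta \, = \, T T^{\ast}, \qquad \pertc \, = \, T^{\ast} T.
\end{equation*}
Because $(A_\theta^\infty, \cH, \dvp)$ is a spectral triple (Corollary \ref{speclem}), $\dvp$ has compact resolvent and $\dvp^2$ has purely discrete spectrum. The standard polar decomposition $T = U|T|$ then produces a partial isometry $U$ from $(\Ker T)^{\perp}$ onto $(\Ker T^{\ast})^{\perp}$ which intertwines $T^{\ast} T\big|_{(\Ker T)^\perp}$ with $T T^{\ast}\big|_{(\Ker T^{\ast})^\perp}$. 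Consequently $TT^{\ast}$ and $T^{\ast}T$ are isospectral away from zero with equal multiplicities, and summing $\lambda^{-z}$ over the common positive spectrum yields $\zeta_{\perta}(z) = \zeta_{\pertc}(z)$.

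The only point that needs a word is that $T = k\delta$ is unbounded, but this causes no real difficulty: $k = e^{h/2}$ lies in $A_\theta^\infty$ and is positive and invertible, so multiplication by $k$ is bounded with bounded inverse, and both the closedness of $T$ and the discreteness of the spectrum of $|T|$ are inherited from $\delta$ (equivalently from $\dvp$). Thus the polar-decomposition argument applies verbatim. The heart of the lemma is therefore the matrix form of $\dvp^2$ together with the elementary fact that $TT^\ast$ and $T^\ast T$ have the same nonzero spectrum; everything else reduces to invoking anti-unitary equivalence.
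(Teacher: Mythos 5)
Your proof is correct and follows essentially the same route as the paper: the decisive point in both is that $\perta = k\lapa k = TT^{\ast}$ and $\pertc = \delta^{\ast}k^2\delta = T^{\ast}T$ for $T = k\delta$ are isospectral away from zero, with the remaining equalities supplied by the anti-unitary equivalences of Lemma \ref{laplem}. The paper merely asserts the common nonzero spectrum, whereas you justify it via polar decomposition; this is a welcome elaboration, not a different argument.
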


\proof The operators $\perta =k\lapa k$ and
$\pertc = \delta^* k^2\delta$ have the same spectrum outside
$0$, which proves the first equality in \eqref{eq:3zetas}. The others
follow from Lemma \ref{laplem}.
\endproof

\section{Conformal invariants} \label{S:conf-inv}

\subsection{Conformal index of a spectral triple} \label{subs:conf-ind}  \hfill\medskip

We digress a little to show that the notion of {\em conformal index}
for a manifold, introduced in~\cite{bo1},
 admits a natural extension to the framework of noncommutative geometry.
\medskip

Let $(\cA , \fH , D)$ be a $p$-summable spectral triple, which
has {\em discrete dimension spectrum} in the sense of~\cite{lif}.
 Fix $h = h^* \in \cA$, and let
 \begin{equation}\label{conf-def}
 D_{sh} \, =\,e^{ \frac{s h}{2} }\,D\,e^{\frac{s h}{2} }       , \qquad  s \in \R .
\end{equation}
Then
  \begin{align*}
 \frac{d}{ds}D_{sh}\,= \, \frac{1}{2} (h D_s  + D_s  h)  ,
 \end{align*}
 hence
   \begin{align*}
 \frac{d}{ds}D_{sh}^2 &\,= \,  \frac{1}{2} \left(h D^2_{sh}  + 2 D_s  h D_s   +   D^2_{sh} h\right)
\end{align*}
 Duhamel's formula for the family $\triangle_s = t D_{sh} ^2$,
  \begin{equation} \label{duhamel}
\frac{de^{- \triangle_s} }{d s} \, = \, - \int_0^1 e^{-u \triangle_s} \,
\frac{d \triangle_s}{d s} \, e^{- (1-u) \triangle_s} \, du \, ,
\end{equation}
allows  to write
\begin{eqnarray} \notag
\frac{d}{d s} \Tr \left(e^{-t D^2_{sh} } \right) &=- \frac{t}{2}\, \Tr \left((h D^2_{sh}  +
2 D_{sh}  h D_{sh}   +   D^2_{sh} h) e^{-t D^2_{sh} }\right) \\ \label{useduhamel}
&=- 2 t  \, \Tr \left(h \, D^2_{sh} \,e^{-t D^2_{sh} } \right)  .
 \end{eqnarray}
Noting that
   \begin{equation*}
- \, \Tr \left(h \, D^2_{sh} \,e^{-t D^2_{sh} } \right)
\, = \,  \,\frac{d}{dt}  \Tr \left(h \, e^{-t D^2_{sh} } \right) ,
\end{equation*}
one obtains the identity
 \begin{equation} \label{exchange}
\frac{d}{d s} \Tr \left(e^{-t D^2_{sh} } \right) \, = \, 2 t  \,\frac{d}{dt}  \Tr \left(h \, e^{-t D^2_{sh} } \right) .
\end{equation}

At this point we make an additional assumption, which stipulates
 \textit{the existence of small time asymptotic expansions of the form
 \begin{equation} \label{asymp1}
  \Tr \left(e^{-t D^2_{sh}} \right)\, \sim_{t \searrow 0} \,
   \sum_{j=0}^\infty {\rm a}_j(D^2_{sh})\, t^{\frac{j - p}{2}}  ,
   \end{equation}
  and more generally, for any $f \in \cA$,
 \begin{equation} \label{asymp+}
  \Tr \left(f\, e^{-t D^2_{sh}} \right)\, \sim_{t \searrow 0} \,
   \sum_{j=0}^\infty {\rm a}_{j}(f, D^2_{sh})\, t^{\frac{j - p}{2}}  ,
   \end{equation}
which moreover can be differentiated term-by-term with respect to
$s \in [-1, 1]$}.

\begin{thm} \label{thmconfind}
Under the above assumptions
the value of the zeta function at the origin $\zeta_{|D|} (0)$ is invariant under conformal
deformations \eqref{conf-def} of the spectral triple $(\cA , \fH , D)$.
\end{thm}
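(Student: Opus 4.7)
The plan is to exploit the identity \eqref{exchange} by substituting the asymptotic expansions \eqref{asymp1} and \eqref{asymp+} on both sides and then matching the coefficient of $t^0$ in the resulting series. Since term-by-term differentiation is permitted in both $s$ and $t$, the left-hand side becomes
\begin{equation*}
\frac{d}{ds}\Tr\left(e^{-tD_{sh}^2}\right) \, \sim_{t\searrow 0} \, \sum_{j=0}^\infty \frac{d\, {\rm a}_j(D_{sh}^2)}{ds}\, t^{\frac{j-p}{2}} ,
\end{equation*}
while the right-hand side becomes
\begin{equation*}
2t \frac{d}{dt}\Tr\left(h\, e^{-tD_{sh}^2}\right) \, \sim_{t\searrow 0} \, \sum_{j=0}^\infty (j-p)\, {\rm a}_{j}(h, D_{sh}^2)\, t^{\frac{j-p}{2}} .
\end{equation*}

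The coefficient of $t^0$ corresponds to $j = p$, and on the right it is multiplied by the factor $(p-p) = 0$. Matching coefficients therefore yields
\begin{equation*}
\frac{d}{ds}\, {\rm a}_p(D_{sh}^2) \, = \, 0 \qqq s \in [-1,1] ,
\end{equation*}
so that ${\rm a}_p(D_{sh}^2)$ is independent of $s$.

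To finish, I would relate ${\rm a}_p(D_{sh}^2)$ to $\zeta_{|D_{sh}|}(0)$ via the standard Mellin transform identity $\zeta_{|D|}(z) = \Gamma(z/2)^{-1}\int_0^\infty t^{z/2-1}\bigl(\Tr(e^{-tD^2}) - \dim\Ker D\bigr)\, dt$, which gives $\zeta_{|D_{sh}|}(0) = {\rm a}_p(D_{sh}^2) - \dim\Ker D_{sh}$. Finally, since $D_{sh} = e^{sh/2} D e^{sh/2}$ with $e^{sh/2}$ invertible, the map $\psi \mapsto e^{-sh/2}\psi$ is a bijection $\Ker D \to \Ker D_{sh}$, so $\dim\Ker D_{sh}$ is also $s$-independent. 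Combining these observations yields the conformal invariance of $\zeta_{|D|}(0)$.

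The only subtle point is justifying the term-by-term differentiation of the asymptotic expansions with respect to $s$, but this is precisely what the additional assumption in the statement guarantees; the computational core reduces to the trivial observation that the $t^0$ coefficient on the right acquires a prefactor of $j - p$ that vanishes at $j = p$.
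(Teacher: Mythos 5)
Your proposal is correct and follows essentially the same route as the paper: differentiate the heat expansion term by term, use the identity \eqref{exchange} to obtain $\frac{d}{ds}{\rm a}_j(D^2_{sh}) = (j-p)\,{\rm a}_j(h,D^2_{sh})$, observe the vanishing at $j=p$, and conclude via the Mellin transform relation \eqref{zeta0}. Your explicit justification that $\dim\Ker D_{sh}$ is $s$-independent (via the bijection $\psi\mapsto e^{-sh/2}\psi$) is a small point the paper states without proof, but otherwise the arguments coincide.
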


\proof
Denote by $|D_{sh}|^{-1}$ the inverse of $|D_{sh}|(1-P_{sh})$ restricted to $\Ker (D_{sh})^\perp$,
where $P_{sh}$ stands for the orthogonal projection onto $\Ker (D_{sh})$,
and consider the zeta function
\begin{align*}
\zeta_{|D_{sh} |} (z) = \Tr ( |D_{sh}|^{-z}) , \quad  \Re z >  p ,
\end{align*}
which is
related to the theta function by the Mellin transform
\begin{align} \label{Mell1}
 \zeta_{|D_{sh}|} (2z) \, = \, \frac{1}{\Gamma (z)} \int_0^\infty t^{z-1} \,
  \left(\Tr (e^{-t D^2_{sh}}) -\dim \Ker D_{sh} \right) \, dt\, .
\end{align}
The asymptotic expansion \eqref{asymp1} ensures that $\zeta_{|D_{sh}|} (z)$ has
meromorphic continuation to $\mathbb{C}$, with only simple poles. Furthermore,
 because of the pole of $\Gamma (z)$ at $z=0$,  $\zeta_{|D_{sh}|} (z)$ is holomorphic at $0$,
and its value at $0$ is
\begin{align} \label{zeta0}
 \zeta_{|D_{sh}|} (0) \, = \,  {\rm a}_p (D^2_{sh}) -\dim \Ker D_{sh} \, = \,
 {\rm a}_p (D^2_{sh}) -\dim \Ker D .
\end{align}
Differentiating term-by-term the asymptotic expansion  \eqref{asymp1}
and applying \eqref{exchange} yields the identities
 \begin{equation} \label{comp1}
\frac{d}{d s}{\rm a}_{j}(D^2_{sh})  \, = \, (j-p) \, {\rm a}_{j}(h, D^2_{sh}) \, ,
\quad j \in \mathbb{Z}^+ .
\end{equation}

In particular,
 \begin{equation*}
\frac{d}{d s} {\rm a}_p (D^2_{sh})  \, = \, 0 \, ,
\end{equation*}
hence
 \begin{equation} \label{c-index}
 \zeta_{|D_{sh}|} (0) \, = \,  {\rm a}_p (D^2) -\dim \Ker D  \, = \,  \zeta_{|D|} (0) .
\end{equation}
\endproof

\medskip

An instance where the above hypotheses are satisfied, and hence the result applies,
is that of the dilaton field rescaling of the mass in the spectral action formalism
for the standard model~\cite{cc3}.

\subsection{Conformal index for $\T^2_\theta$} \label{s:conf-ind-tor}  \hfill\medskip

More to the point, the pseudodifferential calculus for $C^\ast$-dynamical
systems~\cite{C}, and especially the elliptic theory on noncommutative
tori~\cite[\S IV.6]{Co-book}, show that the condition \eqref{asymp+} is  fulfilled
in the case of $\T^2_\theta$. In particular, all the Laplacians in \S \ref{spectrip}
admit meromorphic zeta functions, which have simple poles and are regular at $0$.

\begin{thm} \label{thmconfindlap}
The value at the origin of the zeta function of the Laplacian on functions
 is a conformal invariant,  \ie
\begin{align} \label{confindtor}
\zeta_{\pert} (0) \, = \, \zeta_{\lap}(0) ,
\end{align}
for any conformal weight $\vp$  on $A_\theta$.
\end{thm}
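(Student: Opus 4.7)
The approach is to imitate the Duhamel argument of Theorem \ref{thmconfind} directly for the one-parameter family of positive operators
\[
L_s \, := \, e^{sh/2}\, \lapa\, e^{sh/2}, \qquad s \in [0,1],
\]
on $\cH_0$, interpolating between $L_0 = \lapa$ and $L_1 = \perta$. Note that Theorem \ref{thmconfind} itself cannot be applied verbatim: the square of $\dvp$ is not of the form $e^{h/2} \bar D^2 e^{h/2}$ (an extra factor $e^h$ is inserted between $\delta$ and $\delta^\ast$), so the abstract result would govern a different Laplacian. We work instead directly with the family $L_s$, which is a genuine conformal conjugation of the flat Laplacian.

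Differentiating gives $\frac{d L_s}{ds} = \tfrac{1}{2}(h L_s + L_s h)$. Duhamel's formula \eqref{duhamel} combined with the cyclicity of the trace (as in the derivation of \eqref{useduhamel}) then yields
\[
\frac{d}{ds}\Tr(e^{-tL_s}) \, = \, -t\, \Tr(h L_s\, e^{-tL_s}) \, = \, t\, \frac{d}{dt} \Tr(h\, e^{-tL_s}).
\]
By the pseudodifferential calculus for $\T^2_\theta$ noted at the opening of \S\ref{s:conf-ind-tor}, one has heat expansions
\[
\Tr(e^{-tL_s}) \sim \sum_{j \geq 0} a_j(L_s)\, t^{(j-2)/2}, \qquad \Tr(h\, e^{-tL_s}) \sim \sum_{j \geq 0} a_j(h, L_s)\, t^{(j-2)/2},
\]
which may be differentiated term by term. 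Matching coefficients of $t^{(j-2)/2}$ in the identity above gives
\[
\frac{d}{ds} a_j(L_s) \, = \, \frac{j-2}{2}\, a_j(h, L_s), \qquad j \geq 0,
\]
so in particular $\frac{d}{ds} a_2(L_s) = 0$, i.e.\ $a_2(L_s)$ is independent of $s$.

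To finish, observe that $e^{sh/2}$ is bounded and invertible on $\cH_0$, so $\Ker L_s = e^{-sh/2}\Ker \lapa$ has dimension independent of $s$. The Mellin transform formula \eqref{zeta0} then gives $\zeta_{L_s}(0) = a_2(L_s) - \dim \Ker L_s$, which is constant in $s$; specialization to $s = 0$ and $s = 1$ delivers $\zeta_\perta(0) = \zeta_\lap(0)$. The one delicate point is the justification of term-by-term differentiation of the heat expansions jointly in $s$ and $t$, but this follows from the uniformity in $s \in [0,1]$ of the pseudodifferential symbol estimates on $\T^2_\theta$.
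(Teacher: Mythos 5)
Your proposal is correct and follows essentially the same route as the paper: the paper's proof likewise introduces the interpolating family $\triangle_{sh}=e^{sh/2}\,\lapa\,e^{sh/2}$, applies the Duhamel identity to get $\frac{d}{ds}{\rm a}_{j}(\triangle_{sh})=\frac{1}{2}(j-2)\,{\rm a}_{j}(h,\triangle_{sh})$, and concludes that ${\rm a}_2$ (hence $\zeta(0)$, the kernel dimension being constant) is unchanged. Your explicit remarks on the invariance of $\dim\Ker L_s$ and on why Theorem \ref{thmconfind} cannot be invoked verbatim only make explicit what the paper leaves implicit.
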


\proof
In view of  Lemma \ref{laplem}, one can replace the Laplacian on functions  by $\perta=k \,\lapa \, k$.
Consider the family
 \begin{equation} \label{lapfam}
 \triangle_{sh} \, =\,e^{ \frac{s h}{2} }\,\lapa\,e^{\frac{s h}{2} }       , \qquad  s \in \R .
\end{equation}
Since
  \begin{align} \label{lapfam'}
 \frac{d}{ds}\triangle_{sh} \,= \, \frac{1}{2} \left(h \triangle_{sh}  + \triangle_{sh}  h\right) ,
 \end{align}
by using Duhamel's formula as in \eqref{useduhamel}, one sees
\begin{eqnarray*}
\frac{d}{d s} \Tr \left(e^{-t \triangle_{sh} } \right) =\, -  t  \, \Tr \left(h \, \triangle_{sh} \,
e^{-t \triangle_{sh} } \right)
\, = \, t  \,\frac{d}{dt}  \Tr \left(h \, e^{-t \triangle_{sh} } \right) .
 \end{eqnarray*}
 The variation formulas for
 the coefficients of the heat operator asymptotic expansion yield in this case
 the identities
   \begin{equation} \label{compL}
\frac{d}{d s} {\rm a}_{j}( \triangle_{sh} )  \, = \,
\frac{1}{2} (j-2) \, {\rm a}_{j}(h, \triangle_{sh} ) \, , \quad j \in \mathbb{Z}^+ .
\end{equation}
In particular, ${\rm a}_{2}( \triangle_{sh} ) = {\rm a}_{2}( \lapa)$,
and the proof is achieved in the same way
as that of Theorem \ref{thmconfind}.
\endproof

\begin{rem} {\rm This gives a non-computational proof to the Gauss-Bonnet theorem for the
 noncommutative $2$-torus (\cf~\cite{cc}, \cite{FK}).}
 \end{rem}

\section{Zeta functions and local invariants}\label{main}

We now focus on the zeta function of the modular spectral triple of weight $\vp$, as in Definition \ref{define}, \ie  $
(A_{\theta}^{\infty}, \cH, \dvp)$,
in order to compute its local invariants. In order to state our first main result, \ie Theorem \ref{thmmain}, we shall first introduce several functions which play a key role in the statement of the basic formula.

\begin{figure}
\begin{center}
\includegraphics[scale=0.7]{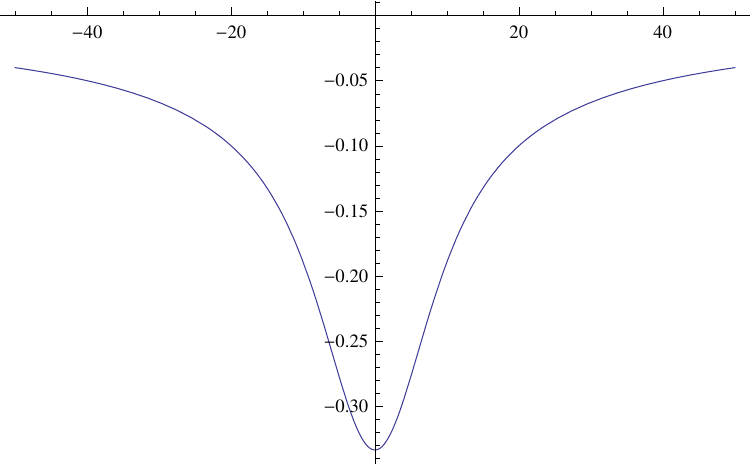}
\caption{Graph of the  function $K$, even and negative with $K(0)=-\frac 13$.}\label{Kfunction}
\end{center}
\end{figure}

\subsection{Curvature functions}\hfill\medskip

In the formulation of Theorem \ref{thmmain} there is an overall factor of $-\frac{\pi}{\tau_2}$ where $\tau_2=\Im(\tau)$ is the imaginary part of $\tau$
but the other functions involved are independent of $\tau$ and we list them below and analyze their elementary properties.

\subsubsection{Functions of one variable}

One always gets an expression of the form $K(\nabla)$ applied to
$$
\lapa (\ell) = \delta_1^2(\ell)+2 \Re(\tau)\delta_1\delta_2(\ell)+|\tau|^2 \delta_2^2(\ell) ,
 \quad \ell = \log k .
$$

For the first half of Laplacian, the function  is
$$
K_0(s)=\frac{2 e^{s/2} \left(2+e^s (-2+s)+s\right)}{\left(-1+e^s\right)^2 s}
=\frac{-2+s\,{\rm coth}(s/2)}{s\,\sh(s/2)}\, .
$$

For the full Laplacian, it is
$$
K(u)= \, \frac{\frac 12-\frac{\sh (u/2)}{u}}{\sh^2 (u/4)}\, .
$$
For the graded case
$$
K_\gamma(u) = \, \frac{\frac 12+\frac{\sh(u/2)}{u}}{\ch^2(u/4)}\, .
$$

\begin{figure}
\begin{center}
\includegraphics[scale=0.7]{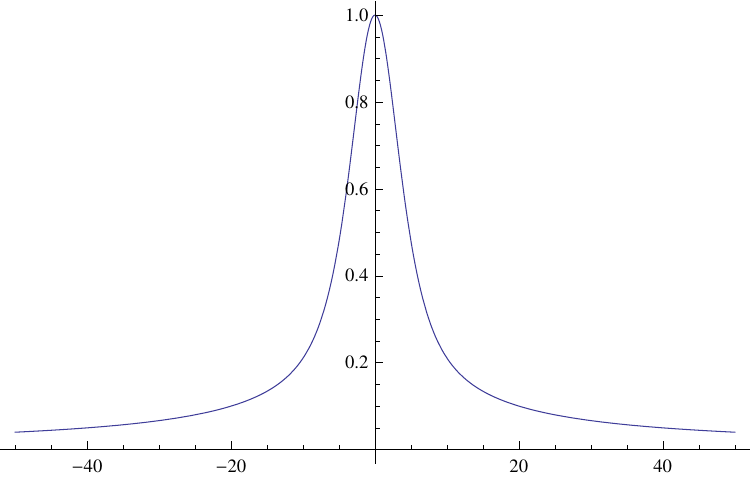}
\caption{Graph of the  function $K_\gamma$, even and positive with $K_\gamma(0)=1$.}
\label{Kgfunction}
\end{center}
\end{figure}

\subsubsection{Functions of two variables}

One always gets an expression of the form $H(\nabla_1,\nabla_2)$ applied to
\begin{equation}\label{basicquadra}
\square_\Re (\ell) :=
(\delta_1(\ell))^2+ \Re(\tau)\left(\delta_1(\ell)\delta_2(\ell)+\delta_2(\ell)\delta_1(\ell)\right)+|\tau|^2 (\delta_2(\ell))^2 , \quad \ell = \log k .
\end{equation}

The functions $H$ of two variables  are:

For the first half of Laplacian
%\begin{tiny}
\begin{align}\label{basicquadra1}
\begin{split}
&H_0(s,t)= \\
&\frac{t (s+t) \ch(s)-s (s+t) \ch(t)+(s-t) (s+t+\sinh(s)+\sinh(t)-\sinh(s+t))}{s t (s+t) \sinh\left(\frac{s}{2}\right) \sinh\left(\frac{t}{2}\right) \sinh\left(\frac{s+t}{2}\right)^2}
\end{split}
\end{align}
%\end{tiny}
For the full Laplacian, the formula of Theorem \ref{thmmain} involves $H_0+H_1$ where
\begin{equation}\label{h1def}
H_1(s,t)=\ch\left(\frac{s+t}{2}\right)H_0(s,t)
\end{equation}
For the graded case, it involves $H_0-H_1$.

\subsubsection{Skew term}

The additional skew term
% involving the commutator of the $\delta_j(\log k)$
is of the form $S(\nabla_1,\nabla_2)$ applied to
\begin{equation}\label{skew}
\square_\Im (\ell) :=
i\,  \Im (\tau)\left(\delta_1(\ell)\delta_2(\ell)-\delta_2(\ell)\delta_1(\ell)\right)  , \quad \ell = \log k .
\end{equation}
It only appears in the second half of Laplacian. The function $S$ is
\begin{equation}\label{skew1}
S(s,t)=\frac{(s+t-t\, \ch(s)-s\, \ch(t)-\sinh(s)-\sinh(t)+\sinh(s+t))}{s\, t\left(\sinh\left(\frac{s}{2}\right) \sinh\left(\frac{t}{2}\right) \sinh\left(\frac{s+t}{2}\right)\right)}
\end{equation}
which is a symmetric function of $s$ and $t$.

\subsubsection{Elementary properties}

We now list the elementary properties of the modular curvature functions of two variables.
(\cf Figures \ref{h0function}, \ref{h1function}, \ref{Sfunction})

\begin{lem} \label{scacurv}
The functions $H_0(s,t)$,  $H_1(s,t)=\ch\left(\frac{s+t}{2}\right)H_0(s,t)$ and $S(s,t)$ fulfill the following properties
\begin{enumerate}
  \item They belong to  $C^\infty_0(\R^2)$.
  \item $H_j(t,s)=-H_j(s,t)$, $S(t,s)=S(s,t)$ and $S(s,t)\geq 0$.
  \item $H_j(-s,-t)=-H_j(s,t)$, $S(-s,-t)=S(s,t)$.
\end{enumerate}
\end{lem}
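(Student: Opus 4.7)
My plan is to verify the three properties by direct manipulation of the closed-form expressions, isolating as the main technical content the positivity claim $S(s,t)\geq 0$. For the symmetries in (2) and (3), I would inspect the explicit formulas term by term, using only $\sinh(-x)=-\sinh(x)$ and $\cosh(-x)=\cosh(x)$. Swapping $s\leftrightarrow t$ in the numerator of $H_0$ reverses its sign while leaving the manifestly symmetric denominator fixed, so $H_0(t,s)=-H_0(s,t)$; since $\cosh((s+t)/2)$ is symmetric, the same antisymmetry is inherited by $H_1$. The numerator and denominator of $S$ are both manifestly symmetric in $s,t$. Under $(s,t)\mapsto(-s,-t)$, each of the six monomials in the numerator of $H_0$ is invariant, whereas the denominator $st(s+t)\sinh(s/2)\sinh(t/2)\sinh((s+t)/2)^2$ accumulates five sign flips (one each from $s,t,s+t,\sinh(s/2),\sinh(t/2)$, the squared factor being unaffected), so $H_j(-s,-t)=-H_j(s,t)$. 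For $S$ the numerator $N(s,t)=2\sinh^2(s/2)(\sinh t-t)+2\sinh^2(t/2)(\sinh s-s)$ is odd under this reflection, as is the denominator $D(s,t)=st\sinh(s/2)\sinh(t/2)\sinh((s+t)/2)$, giving $S(-s,-t)=S(s,t)$.

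For (1), the apparent singularities lie on $\{s=0\}\cup\{t=0\}\cup\{s+t=0\}$. I would exhibit removability by rewriting the functions in terms of the manifestly smooth building blocks $\operatorname{sinhc}(x):=\sinh(x)/x$ (even, strictly positive, $\operatorname{sinhc}(0)=1$) and, for $S$, the smooth odd function
\begin{equation*}
F(x)\,:=\,\frac{\sinh x-x}{\sinh^2(x/2)}\,=\,\frac{2(\sinh x-x)}{\cosh x-1},
\end{equation*}
with $F(0)=0$. Applying the identities
\begin{equation*}
\sinh(s+t)-\sinh s-\sinh t\,=\,4\sinh(s/2)\sinh(t/2)\sinh((s+t)/2),\qquad 1-\cosh x\,=\,-2\sinh^2(x/2),
\end{equation*}
the numerator of $S$ collapses to
\begin{equation*}
N(s,t)\,=\,2\sinh^2(s/2)(\sinh t-t)+2\sinh^2(t/2)(\sinh s-s)\,=\,2\sinh^2(s/2)\sinh^2(t/2)\bigl(F(s)+F(t)\bigr),
\end{equation*}
whose zeros on the singular locus exactly cancel those of $D$. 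An analogous regrouping of $\sinh$ and $\cosh$ terms handles the numerator of $H_0$, and hence $H_1=\cosh((s+t)/2)H_0$. Decay at infinity, together with all derivatives, is elementary: in every direction of $\R^2$ the exponential weight from $\sinh(s/2)\sinh(t/2)\sinh((s+t)/2)^2$ (respectively $\sinh((s+t)/2)$ alone for $S$) dominates the polynomial-times-hyperbolic growth of the numerators, which yields the required decay for all partial derivatives.

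The main obstacle is the positivity $S(s,t)\geq 0$. From the collapsed form of $N$,
\begin{equation*}
S(s,t)\,=\,\frac{\sinh(s/2)\sinh(t/2)}{st}\cdot\frac{2\bigl(F(s)+F(t)\bigr)}{\sinh((s+t)/2)},
\end{equation*}
and the first factor equals $\tfrac14\operatorname{sinhc}(s/2)\operatorname{sinhc}(t/2)>0$. Positivity therefore reduces to showing that $F(s)+F(t)$ has the same sign as $s+t$ (equivalently, as $\sinh((s+t)/2)$). I would prove that $F$ is strictly increasing on $\R$ by direct computation:
\begin{equation*}
F'(x)\,=\,\frac{2\bigl(2-2\cosh x+x\sinh x\bigr)}{(\cosh x-1)^2},
\end{equation*}
and the auxiliary function $h(x):=2-2\cosh x+x\sinh x$ satisfies $h(0)=h'(0)=0$ with $h''(x)=x\sinh x\geq 0$ on all of $\R$, whence $h\geq 0$. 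Strict monotonicity of the odd function $F$ then forces $F(s)+F(t)=0$ precisely on $\{s+t=0\}$ and to carry the sign of $s+t$ elsewhere. A limit computation via L'H\^opital across $s+t=0$ yields the value $\operatorname{sinhc}(s/2)^2 F'(s)>0$, confirming both smoothness and strict positivity on this locus. This closes the positivity argument and, with it, the lemma.
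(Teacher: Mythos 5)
Your argument is correct, and on the one point where the paper's own proof is genuinely weak it is a real improvement. The paper establishes smoothness by expanding the numerators in Taylor series along the three singular lines and at the origin, controls the behaviour at infinity by estimating maxima on the $\ell^1$-spheres $\{|s|+|t|=a\}$, and for the positivity of $S$ merely \emph{asserts} that the minimum of $S$ on each such sphere is attained on the diagonal, where $S(x,x)=\frac{4}{x^2}-\frac{4}{x\sinh x}\geq 0$; that assertion is never justified. Your factorization of the numerator,
\begin{equation*}
m(s,t)=(\cosh s-1)(\sinh t-t)+(\cosh t-1)(\sinh s-s)
=2\sinh^2(s/2)\sinh^2(t/2)\bigl(F(s)+F(t)\bigr),
\end{equation*}
which I have checked (it is the addition formula $\cosh s\sinh t+\cosh t\sinh s=\sinh(s+t)$ combined with $\cosh x-1=2\sinh^2(x/2)$), together with the strict monotonicity of the odd function $F$ (via $h(x)=2-2\cosh x+x\sinh x$, $h(0)=h'(0)=0$, $h''(x)=x\sinh x\geq 0$), reduces positivity to the statement that $F(s)+F(t)$ carries the sign of $s+t$, and thereby supplies a complete proof of $S\geq 0$ (in fact $S>0$). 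Your limiting value $\bigl(\sinh(s/2)/(s/2)\bigr)^2F'(s)$ on the anti-diagonal agrees with the paper's closed form $4(-2+s\coth(s/2))/s^2$ for $S(s,-s)$, a useful consistency check. The symmetry statements (2)--(3) are verified exactly as the paper intends.

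Two points need more care before your write-up is a full replacement for the paper's proof of item (1). First, the ``analogous regrouping'' for the numerator of $H_0$ is not supplied and is not analogous in any easy sense: using your identity $\sinh(s+t)-\sinh s-\sinh t=4\sinh(s/2)\sinh(t/2)\sinh((s+t)/2)$ one gets
$n(s,t)=2(s+t)\bigl(t\sinh^2(s/2)-s\sinh^2(t/2)\bigr)-4(s-t)\sinh(s/2)\sinh(t/2)\sinh((s+t)/2)$,
but the two resulting fractions are \emph{not} individually smooth on $\{s=0\}\cup\{t=0\}$ --- their singularities cancel only in the sum --- so an order-of-vanishing computation along the lines of the paper's expansions is still required there. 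Second, the claim that the exponential weight of the denominator dominates ``in every direction'' fails precisely on the three singular lines: on $t=-s$ the exponentials cancel exactly and $H_1(s,-s)\sim-\tfrac{4}{3s}$, so the decay there is only polynomial and comes from the prefactors, not from exponential domination. The correct statement, as in the paper, uses $\sup\{|s|,|t|,|s+t|\}=\tfrac12(|s|+|t|+|s+t|)$ away from the lines together with separate polynomial estimates near them. Neither point threatens the result, but both must be filled in.
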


\proof The smoothness of $H_0$ is clear outside the three lines $L_1:s+t=0$, $L_2:s=0$ and $L_3:t=0$.
 Let $n(s,t)$ be the numerator of the fraction defining $H_0$. Near the first line one gets
 the expansion
 $$
 n(s,t)=\frac{1}{6} (-2 s-s \ch(s)+3 \sinh(s)) (s+t)^3+\frac{1}{12} (2-2 \ch(s)+s \sinh(s)) (s+t)^4$$ $$+\frac{1}{120} (-2 s-3 s \ch(s)+5 \sinh(s)) (s+t)^5+O(s+t)^6
 $$
 which shows that as long as $(s,t)\neq (0,0)$ the function $H_0$ is smooth at $(s,t)\in L_1$.
 The value of $H_0$ on $L_1$ is given by
 \begin{equation}\label{valuel1}
 H_0(s,-s)=   -\frac{4 \left(3-3 e^{2 s}+s+4 e^s s+e^{2 s} s\right)}{3 \left(\left(-1+e^s\right)^2 s^2\right)}
 \end{equation}
\begin{figure}
\begin{center}
\includegraphics[scale=1.0]{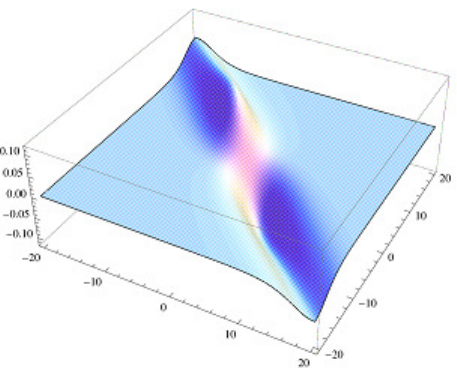}
\caption{Graph of the  function $H_0$.}\label{h0function}
\end{center}
\end{figure}

 Near the second line $L_2$ one gets the expansion
 $$
 n(s,t)=\frac{1}{2} \left(4+t^2-4 \ch(t)+t \sinh(t)\right) s^2+\frac{1}{6} (2 t+t \ch(t)-3 \sinh(t)) s^3+O(s)^4
 $$
 which gives the smoothness for $(s,t)$ on the second line (and the third similarly). One needs to look carefully at what happens at the crossing point $(s,t)=(0,0)$.
 One finds that the Taylor expansion of $H_0$ at the point $(0,0)$ is of the form
$$
H_0(s,t)=-\frac{s}{45}+\frac{t}{45}+\frac{s^3}{504}+\frac{s^2 t}{840}-\frac{s t^2}{840}-\frac{t^3}{504}-\frac{s^3 t^2}{6720}+\frac{s^2 t^3}{6720}$$ $$-\frac{47 s^4 t}{201600}+\frac{47 s t^4}{201600}-\frac{67 s^5}{604800}+\frac{67 t^5}{604800}+\ldots
$$

\medskip

Similarly we let $m(s,t)$ be the numerator of the fraction defining $S(s,t)$
 $$
 m(s,t)=(s+t-t\, \ch(s)-s\, \ch(t)-\sinh(s)-\sinh(t)+\sinh(s+t))
 $$
 and get the expansion near the line $L_1$ in the form
$$
m(s,t)=(2-2 \ch(s)+s \sinh(s)) (s+t)+\frac{1}{2} (-s \ch(s)+\sinh(s)) (s+t)^2$$ $$+\frac{1}{6} (1-\ch(s)+s \sinh(s)) (s+t)^3+O(s+t)^4
$$
Near the second line $L_2$ one gets the expansion
$$
m(s,t)=\frac{1}{2} (-t+\sinh(t)) s^2+\frac{1}{6} (-1+\ch(t)) s^3+O[s]^4
$$
The denominator of $S$ is
$$
s\, t\,\sinh\left(\frac{s}{2}\right) \sinh\left(\frac{t}{2}\right) \sinh\left(\frac{s+t}{2}\right)
$$
and this gives the smoothness outside the origin. At $(0,0)$ one has the Taylor expansion
$$
S(s,t)=\frac{2}{3}-\frac{s^2}{45}-\frac{s t}{30}-\frac{t^2}{45}+\frac{s^4}{1260}+\frac{s^3 t}{504}+\frac{s^2 t^2}{378}+\frac{s t^3}{504}+\frac{t^4}{1260}+\ldots
$$

\medskip

\begin{figure}
\begin{center}
\includegraphics[scale=1.1]{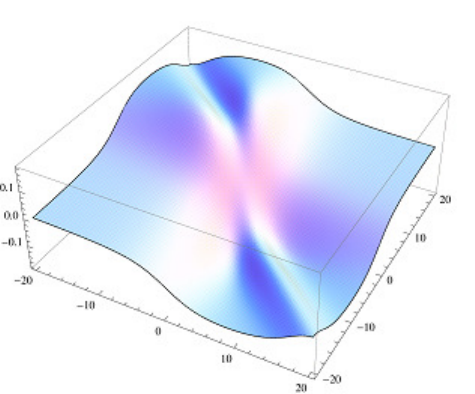}
\caption{Graph of the  function $H_1$.}\label{h1function}
\end{center}
\end{figure}

We now look at the behavior at $\infty$. It is enough to show that the function
 $H_1(s,t)=\ch\left(\frac{s+t}{2}\right)H_0(s,t)$ tends to $0$ at $\infty$.
 We write $H_1(s,t)$ as the fraction
 \begin{equation}\label{numdemh1}
   \frac{t (s+t) \ch(s)-s (s+t) \ch(t)+(s-t) (s+t+\sinh(s)+\sinh(t)-\sinh(s+t))}{s t (s+t) \sinh\left(\frac{s}{2}\right) \sinh\left(\frac{t}{2}\right) {\rm tanh}\left(\frac{s+t}{2}\right)\sinh\left(\frac{s+t}{2}\right)}
 \end{equation}
  First note the equality
\begin{equation}\label{maxsum}
    \sup\{|s|,|t|,|s+t|\}=\frac 12\left( |s|+|t|+|s+t|    \right)\qqq s,t\in \R.
\end{equation}
which shows that away from the lines $L_j$ the numerator and denominator have the same exponential increase. Let $||(s,t)||_1=|s|+|t|$. The maximum of $|H_1(s,t)|$ on the sphere $S_a=\{(s,t)\mid ||(s,t)||_1=a\}$ is reached on the interval
$$
I_a=\{(s,-a+s)\mid s\in [\frac a2,a]\}
$$

\begin{figure}
\begin{center}
\includegraphics[scale=1.1]{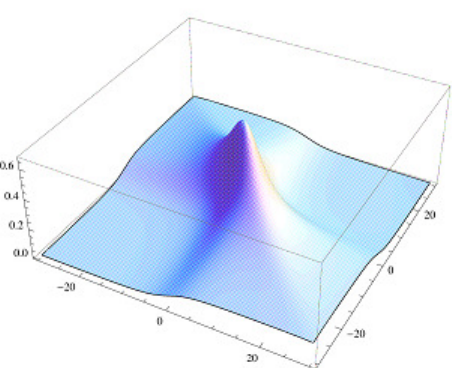}
\caption{Graph of the  function $S$.}\label{Sfunction}
\end{center}
\end{figure}

Away from the boundary of this interval one can approximate the denominator of $H_1$  by the product of the leading exponentials which gives
$$
\frac 18 e^{\frac{s}{2}-\frac{t}{2}+\frac{s+t}{2}} s t (s+t)
$$
Using this approximation and neglecting terms which are suppressed by an exponential
one reduces the function $H_1(s,t)$ well inside the interval $I_a$ to the fraction
$$
r(s,t)=-\frac{4 ((-1+t) t+s (1+t))}{s t (s+t)}
$$
On $I_a$ the function $r(s,t)$ reaches its minimum at $(s,t)=(\frac a2+ v\sqrt{a} ,-\frac a2+ v\sqrt{a} )$ where $v\sim \frac{1}{\sqrt 2}$ fulfills
$$
1-2 v^2-\frac{12 v^2}{a}+\frac{8 v^3}{\sqrt{a}}-\frac{8 v^4}{a}=0
$$
One finds in this way that the maximum of $|H_1(s,t)|$ on the sphere $S_a=\{(s,t)\mid ||(s,t)||_1=a\}$ is of the order of $\frac 8a$. One needs to control the size of the function
$H_1(s,t)$ in the neighborhood of the zeros of the denominator.
The restriction of $H_1(s,t)$ to the anti-diagonal $t=-s$ is given by the odd function
$$
H_1(s,-s)=(-\frac{4 s}{3}-\frac{2}{3} s \ch(s)+2 \sinh(s))/s^2\sh(s/2)^2
$$
which is equivalent to $-\frac 43 \frac 1s$ when $s\to\pm \infty$.
The restriction of $H_1(s,t)$ to the axis $t=0$ is given by
$$
H_1(s,0)=-\frac{(4+s^2-4 \,\ch(s)+s \,\sh(s))\ch(s/2)}{s^2\sh(s/2)^3}
$$
which is equivalent to $- \frac 2s$ when $s\to\pm \infty$.

The restriction of $S(s,t)$ to the anti-diagonal $t=-s$ is given by
$$
\frac{4 \left(-2+s \,\text{cotanh}(\frac{s}{2})\right)}{s^2}
$$
which behaves like $\frac 4s$ for $s\to \pm \infty$ and this gives the behavior of the
maximum of $|S(s,t)|$ on the sphere $S_a=\{(s,t)\mid ||(s,t)||_1=a\}$. The minimum of $S(s,t)$ on the sphere $S_a$ is reached on the diagonal $s=t$ where the function reduces to
$$
S(x,x)=\frac{4}{x^2}-\frac{4}{x \sinh(x)}\geq 0
$$
The other properties can be checked in a straightforward manner.
\endproof

\subsection{Local curvature functionals} \hfill\medskip

 With the above notation, we are now ready to express in local terms the value at the
 origin of the zeta functions of the modular spectral triple of weight $\vp$
 $$
(A_{\theta}^{\infty}, \cH, \dvp).$$

For each $a\in A_{\theta}^{\infty}$ we consider the  zeta function
\begin{align} \label{zetaa}
\zeta_{\pert} (a, z) = \Tr (a\, \pert^{-z}\, (1-P_\vp)) , \quad  \Re z >  2 ,
\end{align}
where $P_\vp$ stands for the orthogonal projection onto $\Ker \pert$. It is
related to the theta function by the Mellin transform
\begin{align} \label{Mell2}
 \zeta_{\triangle_\vp} (a, z) \, = \, \frac{1}{\Gamma (z)} \int_0^\infty t^{z-1} \,
  \left(\Tr (a\, e^{-t \triangle_\vp}) -\Tr (P_\vp \, a \, P_\vp) \right) \, dt\, .
\end{align}
As in the untwisted case, \cf \eqref{zeta0}, its value at $0$ is related to the
constant term in the asymptotic expansion \eqref{asymp1}, via
\begin{align}  \label{zetaa0}
 \zeta_{\triangle_\vp}(a, 0) \, = \,  {\rm a}_2 (a, \triangle_\vp) \, - \, \Tr (P_\vp \, a \, P_\vp) .
\end{align}
The computation of the constant term $ {\rm a}_2 (a, \triangle_\vp)$ is quite
 formidable, as could be expected from the already laborious calculations
 performed in \cite{cc} and \cite{FK} in the untwisted case, \ie $a=1$.
  For the clarity of the exposition,
we postpone giving the technical details until \S \ref{tech}.

 On the other hand, the additional term is very easy to compute. Indeed,
 $\Ker \perta = \Ker (\delta k)$ is one dimensional and one has (with $\vp_n$ the associated state)
 \begin{equation}\label{projker0}
   \Tr (P_\vp \, a \, P_\vp)=\vp_0(ak^{-2})/\vp_0(k^{-2})
 = \frac{\vp (a)}{\vp(1)}=\vp_n(a) \,.
 \end{equation}
 One deals in a similar manner with the Laplacian $\pertc$ and one lets $P^{(0,1)}$ be the orthogonal projection on its one-dimensional kernel, $\Ker \pertc = \Ker( k \delta^*)$ which is independent of $k$,
 and
consists of the constant multiples of the unit $1 \in A_\theta$, so that
\begin{equation} \label{projker}
 \Tr (P^{(0,1)} \, a \, P^{(0,1)}) =  \vp_0 (a) .
\end{equation}

\begin{thm}\label{thmmain}
Let $\vp$ be a conformal weight with dilaton
  $h = h^\ast \in A_\theta^\infty$ and let $k=e^{h/2}$.
The value at the origin of the zeta function associated to the modular
spectral triple of weight $\vp$
is given for any $a\in A_{\theta}^{\infty}$ by the expression
\begin{eqnarray} \notag
    \Tr(a|\dvp|^{-z})\big|_{z=0}&=&-\frac{\pi}{\tau_2}\vp_0(a\left(K(\lmod)(\lapa(\log k))+H(\nabla_1,\nabla_2)(\square_\Re (\log k)\right.) \\ \label{zetavalue1}
   &+&\left. S(\nabla_1,\nabla_2)
(\square_\Im (\log k))\right) -\vp_n(a)  \, - \, \vp_0 (a) \, ,
 \end{eqnarray}
 where $H=H_0+H_1$
 and for its graded version by
\begin{eqnarray} \notag
\Tr(\gamma a|\dvp|^{-z})\big|_{z=0}&=&-\frac{\pi}{\tau_2}\vp_0(a\left(K_\gamma(\lmod)(\lapa(\log k))+
   H_\gamma(\nabla_1,\nabla_2)(\square_\Re (\log k) \right.) \\ \label{zetavalue2}
   &-&\left. S(\nabla_1,\nabla_2)
(\square_\Im (\log k))\right) -\vp_n(a) \, + \, \vp_0 (a)   .
 \end{eqnarray}
 where $H_\gamma=H_0-H_1$.
\end{thm}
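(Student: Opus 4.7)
The plan is to reduce the computation to two separate heat coefficients via Lemma~\ref{laplem}. Since $\dvp^2 = \perta\oplus\pertc$ on $\cH=\cH_0\oplus\cH_0$, the left action of $a$ is $a\oplus a$ and the grading is $1\oplus(-1)$, the identity~\eqref{zetaa0} applied in each summand gives
\begin{equation*}
\Tr(a|\dvp|^{-z})\big|_{z=0} \,=\, {\rm a}_2(a,\perta)+{\rm a}_2(a,\pertc)-\vp_n(a)-\vp_0(a),
\end{equation*}
with the kernel contributions supplied by~\eqref{projker0} and~\eqref{projker}; the graded version is the corresponding difference, with the sign of the $\vp_0(a)$ term flipping accordingly. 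Hence the assertion reduces to local formulas for ${\rm a}_2(a,\perta)$ and ${\rm a}_2(a,\pertc)$ whose sum (resp.\ difference) reproduces the asserted $K,H$ (resp.\ $K_\gamma,H_\gamma$), with the skew $S\,\square_\Im$-contribution arising only from $\pertc$.

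For each of these Laplacians I would follow the symbolic method of \cite{cc,Paula,FK}. Using Cauchy's formula $e^{-t\pert}=\tfrac{1}{2\pi i}\oint_\Gamma e^{-t\lambda}(\pert-\lambda)^{-1}d\lambda$ and the pseudodifferential calculus of~\cite{C} on $\T^2_\theta$, one expands the total symbol of the Laplacian as $p_2+p_1+p_0$ in $\xi\in\R^2$: the order-two piece is $k^2\,|\xi|_\tau^2$ for $\perta$ and $(\xi_1+\tau\xi_2)\,k^2\,(\xi_1+\bar\tau\xi_2)$ for $\pertc$, where $|\xi|_\tau^2=\xi_1^2+2\Re(\tau)\xi_1\xi_2+|\tau|^2\xi_2^2$. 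One then solves the standard recursion for the parametrix symbol $b_0+b_1+b_2+\cdots$ of $(\pert-\lambda)^{-1}$ with $b_0=(p_2-\lambda)^{-1}$, and computes
\begin{equation*}
{\rm a}_2(a,\pert)\;=\;\frac{1}{(2\pi)^2}\int_{\R^2}\vp_0\!\Bigl(a\cdot\frac{1}{2\pi i}\oint_\Gamma e^{-\lambda}\,b_2(\xi,\lambda)\,d\lambda\Bigr)\,d\xi.
\end{equation*}
The $\lambda$-integral converts products of $b_0$ into elementary functions of $p_2$, and the Gaussian $\xi$-integral produces the overall prefactor $\pi/\tau_2$ through $\int_{\R^2}e^{-|\xi|_\tau^2}d\xi=\pi/\tau_2$.

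The output is a sum of on the order of a thousand monomials of the form $\vp_0(a\,k^{\alpha_0}X_1 k^{\alpha_1}X_2 k^{\alpha_2})$, where each $X_i$ is $\delta_i(h)$, $\delta_i\delta_j(h)$, or a product of two such. The rearrangement Lemma of \S\ref{sectrearang} is now applied: it moves all $k$-factors to the right of $X_1 X_2$ inside $\vp_0$ and replaces the intercalated $k^{\alpha_i}$'s by the action of an explicit smooth function of the commuting operators $\nabla_1,\nabla_2$ on the tensor $X_1\otimes X_2$, with $\nabla=\log\modu=-{\rm ad}(h)$. After this reorganization, the second-derivative monomials collect into $\lapa(\log k)$ multiplied by a function of $\nabla$, while the quadratic first-derivative monomials split into the symmetric and antisymmetric forms $\square_\Re$ and $\square_\Im$, each acted on by a function of $(\nabla_1,\nabla_2)$. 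The positions of the two $k$-factors in the leading symbol of $\perta$ versus $\pertc$---on the outside versus sandwiched between the $\delta$'s---are precisely what produce the extra $\ch\!\bigl(\tfrac{\nabla_1+\nabla_2}{2}\bigr)$ factor of~\eqref{h1def} distinguishing $H_1$ from $H_0$, and what activate the skew term $S\,\square_\Im$ which vanishes for $\perta$ on symmetry grounds.

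The main obstacle is the combinatorial bookkeeping. Expanding $b_2$ yields roughly a thousand noncommutative monomials, and collecting them into the compact closed-form expressions~\eqref{k0funct}, \eqref{basicquadra1} and~\eqref{skew1} is infeasible by hand; it requires the computer-assisted calculation mentioned in the introduction, cross-checked against the independent computation of~\cite{FK1}. The functional relation~\eqref{transforulepre} established later in the paper then furnishes an a priori internal consistency check on $K_0$ and $H_0$ once their explicit forms have been identified.
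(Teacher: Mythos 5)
Your proposal follows essentially the same route as the paper: the reduction to ${\rm a}_2(a,\perta)$ and ${\rm a}_2(a,\pertc)$ via Lemma~\ref{laplem} and the kernel projections \eqref{projker0}, \eqref{projker}, followed by the parametrix symbol recursion for $b_2$, the $\lambda$- and $\xi$-integrations, and the Rearrangement Lemma of \S\ref{sectrearang}, with the final collection of roughly a thousand monomials delegated to computer assistance exactly as in \S\ref{tech}. The only ingredient you gloss over is the expansional formula of \S\ref{expansect}, which converts the derivatives of $k=e^{h/2}$ produced by the symbol calculus into functions of the modular operator applied to derivatives of $\log k$, but this belongs to the bookkeeping you correctly identify as the main burden.
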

\medskip

In order to check the normalization constants we compare this result with the classical formula
for the value at $0$ of the zeta function of the Laplacian on a closed surface $\Sigma$
\begin{equation}\label{lapclass0}
\zeta (0)+{\rm Card} \{ j \mid \lambda_j = 0 \} = \frac{1}{12 \pi}
\int_\ries \, R \sqrt{g}d^2x = \frac 16 \,\chi (\ries)  \, ,
\end{equation}
where $R$ is the scalar curvature (normalized as being $1$ for the unit two sphere) and $\chi (\ries) $ the
 Euler-Poincar\'e characteristic. One double checks this formula for the unit two sphere, whose Laplacian spectrum is the set $\{n^2+n\mid n\in \Z_+\}$ where the eigenvalue $n^2+n$ has multiplicity $2n+1$. One has
 $$
 \sum_{\Z_+}(2n+1)e^{-t(n^2+n)}\sim \frac 1t +\frac 13 +O(t)
 $$
 whose constant term $\frac 13$ agrees with the right hand side $\frac 16 \,\chi (S^2)$ of \eqref{lapclass0}.
   In a local form one has
 \begin{equation}\label{lapclass}
   \Tr(a\Delta^{-z})\big|_{z=0}+\Tr(aP)=\frac{1}{12 \pi}
\int_\ries a\, R \sqrt{g}d^2x
 \end{equation}
 where $P$ is the orthogonal projection on the kernel of the Laplacian. To compare this formula with Theorem \ref{thmmain} we take the half sum of \eqref{zetavalue1} and \eqref{zetavalue2} in
 the commutative case. This reduces to
 \begin{equation}\label{comcase}
   \Tr(a\Delta^{-z})\big|_{z=0}+\Tr(aP)=
   -\frac{\pi}{\tau_2}\vp_0(a K_0(0)\lapa(\log k))
 \end{equation}
 One has $K_0(0)=\frac 13$ and $\vp_0$ is the state associated to the volume form of the flat two torus with integral spectrum. To check the overall normalization we take $\tau=i$ so that $\tau_2=1$. The torus has coordinates $x_j$ with period $2\pi$, Riemannian metric $ds^2=dx_1^2+dx_2^2$, and the spectrum of the Laplacian is the set $\{n^2+m^2\mid n,m\in \Z\}$. Thus
 \begin{equation}\label{phizer}
   \vp_0(a)=\frac{1}{(2\pi)^2}\int a dx_1dx_2\,, \ \ \lapa(f)=-(\partial_1^2+\partial_2^2)f\,.
 \end{equation}
 We now consider the Riemannian metric $g=k^{-2}(dx_1^2+dx_2^2)$. Its volume form is $\sqrt{g}
 d^2x= k^{-2}dx_1dx_2$. The scalar curvature is
 \begin{equation}\label{scalcurvconf}
    R= k^2 (\partial_1^2+\partial_2^2)\log k\,.
 \end{equation}
 To check this, take the stereographic coordinates on the unit two sphere, so that the metric becomes $g=k^{-2}(dx_1^2+dx_2^2)$ with $k=\frac 12(1+x_1^2+x_2^2)$. Then \eqref{scalcurvconf} gives $R=1$ as required. Thus \eqref{lapclass} gives for arbitrary $k$ the local form
 \begin{equation}\label{lapclass1}
   \Tr(a\Delta^{-z})\big|_{z=0}+\Tr(aP)=\frac{1}{12 \pi}
\int_{T^2} a\, (\partial_1^2+\partial_2^2)(\log k) d^2x
 \end{equation}
 and this agrees with the right hand side of \eqref{comcase} which is, using \eqref{phizer},
 $$
 -\frac{\pi}{\tau_2}\vp_0(a K_0(0)\lapa(\log k))=\pi \frac 13 \vp_0(a\, (\partial_1^2+\partial_2^2)(\log k))=
 \frac{1}{12 \pi}
\int_{T^2} a\, (\partial_1^2+\partial_2^2)(\log k) d^2x
 $$

The fact that the local curvature expressions, occurring in \eqref{zetavalue1} on the one hand and
those occurring in \eqref{zetavalue2} on the other hand, are sharply different stands in stark
contrast with the case of the ordinary torus. For the latter they reduce to
\begin{align*}
-K(0)\, \lapa(\log k) \, k^2 \, = \,
\frac 16\, \lapa(h)\,e^h,  \quad \text{resp.} \quad - K_\g(0) \, \lapa(\log k) \, k^2 \, = \,
-\frac 12 \, \lapa(h) \, e^h ,
\end{align*}
and thus are both constant multiples of the
Gaussian curvature of the conformal metric.

\section{Log-determinant functional and scalar curvature}

In this section we develop the analogue of the Osgood-Phillips-Sarnak
functional~\cite{OPS}, which is a scale invariant version of the log-determinant
of the Laplacian. We then compute its gradient, whose corresponding flow for Riemann
surfaces exactly reproduces Hamilton's Ricci flow~\cite{Ham}, and therefore yields
the appropriate analogue of the scalar curvature.

\subsection{Variation of the log-determinant} \hfill\medskip

The Ray-Singer zeta function regularization of the determinant of a Laplacian~\cite{RS1},
as well as the related notion of analytic torsion~\cite{RS2}, make perfect sense
in the case of noncommutative $2$-tori, due to the existence of the appropriate
pseudodifferential calculus~\cite{C}. Thus,
\begin{align*}
 \log \Det^\prime (\perta) \, =\,  - \zeta^\prime_{\perta} (0) , \quad
 \text{resp.} \quad  \log \Det^\prime (\pertc) \, =\,  - \zeta^\prime_{\pertc} (0) ,
\end{align*}
are well-defined. Because $\perta = k\delta \delta^* k$
 and $\pertc = \delta^* k^2 \delta$ have the same spectrum outside $0$,
 and so the corresponding zeta functions coincide, the two log-determinants are
 in fact equal.
 \medskip

In order to compute the above determinant, let us
 consider again the $1$-parameter family of Laplacians, \cf \eqref{lapfam},
 \begin{equation*}
 \triangle_{sh} := \, k^s\,\lapa\,k^s
 \, =\,e^{ \frac{s h}{2} }\,\lapa\,e^{\frac{s h}{2} }       , \qquad  s \in \R .
\end{equation*}
Differentiating the corresponding family of zeta functions
and taking into account \eqref{lapfam'}  one
obtains, for $\Re z >  p\,$,
\begin{align*}
\begin{split}
&\frac{d}{ds} \zeta_{\triangle_{sh}} (z)\,= \, \frac{1}{\Gamma (z)} \int_0^\infty t^{z}
 \,\frac{d}{dt}  \Tr \left(h e^{-t \triangle_{sh}} (1-P_{sh})\right) \, dt  \\
&= \frac{1}{\Gamma (z)} \, t^{z}  \, \Tr \left(h e^{-t \triangle_{sh}} (1-P_{sh})\right) \bigg|_0^\infty \, - \,
  \frac{z}{\Gamma (z)} \int_0^\infty t^{z-1} \,  \Tr\left(h e^{-t \triangle_{sh}} (1-P_{sh})\right)\, dt \\
 &=\, -   \frac{z}{\Gamma (z)} \int_0^\infty t^{z-1} \,  \Tr\left(h e^{-t \triangle_{sh}} (1-P_{sh})\right)\, dt
 \, = : \, - z \, \zeta_{\triangle_{sh}} (h, z).
  \end{split}
\end{align*}
By meromorphic continuation one obtains the identity
\begin{align} \label{dMellLap}
\frac{d}{ds} \zeta_{\triangle_{sh}} (z)\, = \,
- z \, \zeta_{\triangle_{sh}} (h, z) , \quad \forall \, z \in \mathbb{C} ,
\end{align}
and taking
$\displaystyle \frac{d}{dz} \bigg|_{z=0}$ yields the variation formula
\begin{align} \label{vardet2}
- \frac{d}{ds} \zeta^\prime_{\triangle_{sh}} (0)\, =
 \, \zeta_{\triangle_{sh}} (h, 0)  .
\end{align}
Applying Theorem \ref{thmmain} to the conformal weights $\vp_s$ with dilaton $sh$,
and retaining only the even part  of the
expression in the right hand side, yields
\begin{align} \label{varK}
\begin{split}
- \frac{d}{ds} \zeta^\prime_{\triangle_{sh}} (0)\, =-\frac{\pi}{\tau_2}\, &
\,  \vp_0\left( h\big(s K_0(s\lmod)(\triangle (\frac{h}{2})) +
s^2 H_0(s\nabla_1, s\nabla_2)(\square_\Re(\frac{h}{2}))\big)\right) \\
 &- \, \vp_n(h) ,
 \end{split}
\end{align}
One has $\vp_n(h)=-\frac{d}{ds} \log \vp_0 (e^{-sh})$
and so by integration along the interval $0 \leq s \leq 1$ one obtains the following
{\em conformal variation formula}.
\begin{lem}\label{inside0}
Let $\vp$ be a conformal weight with dilaton
  $h = h^\ast \in A_\theta^\infty$. Then
\begin{align} \label{detK}
\log \Det^\prime (\perta)&= \,\log \Det^\prime \D \, + \, \log \vp (1) \\ \notag
& - \frac{\pi}{\tau_2} \int_0^1 \vp_0\left( h\big(s K_0(s\lmod)(\triangle (\log k)) +
s^2 H_0(s\nabla_1,s\nabla_2)(\square_\Re(\log k))\big)\right) ds
\end{align}
\end{lem}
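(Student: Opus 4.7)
The lemma is essentially obtained by integrating the infinitesimal identity \eqref{varK} over the interval $s \in [0,1]$. At $s=0$ the family $\triangle_{sh} = e^{sh/2}\lapa\,e^{sh/2}$ coincides with $\lapa = \D$, and at $s=1$ it equals $\perta = k\lapa k$. Since $\log \Det^\prime (\triangle_{sh}) = -\zeta^\prime_{\triangle_{sh}}(0)$, the fundamental theorem of calculus yields
\begin{equation*}
\log \Det^\prime (\perta) \, - \, \log \Det^\prime (\D) \, = \, -\int_0^1 \frac{d}{ds}\zeta^\prime_{\triangle_{sh}}(0)\,ds,
\end{equation*}
and the variation formula \eqref{vardet2} rewrites this as $\int_0^1 \zeta_{\triangle_{sh}}(h,0)\,ds$. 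The whole task is therefore to identify this integrand with the sum of terms on the right-hand side of \eqref{detK}.

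To this end, I would apply Theorem \ref{thmmain} to the conformal weight $\vp_s$ with dilaton $sh$. For this weight $\log k_s = sh/2$, the modular derivation is $s\lmod$, and the basic local invariants scale homogeneously as
\[
\lapa(\log k_s) \, = \, s\,\lapa(h/2), \qquad \square_\Re(\log k_s) \, = \, s^2\, \square_\Re(h/2).
\]
Specializing \eqref{zetavalue1} to the component on functions (the skew $\square_\Im$ term does not occur for $\perta$) and taking $a = h$, and then subtracting the kernel correction $\vp_{s,n}(h)$ according to \eqref{zetaa0} and \eqref{projker0}, one obtains precisely the right-hand side of \eqref{varK}.

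The final step is to integrate the kernel contribution. Observing that
\[
\vp_{s,n}(h) \, = \, \frac{\vp_0(h\, e^{-sh})}{\vp_0(e^{-sh})} \, = \, -\frac{d}{ds}\log \vp_0(e^{-sh}),
\]
one gets $-\int_0^1 \vp_{s,n}(h)\,ds \, = \, \log \vp_0(e^{-h}) - \log \vp_0(1) \, = \, \log \vp(1)$. Substituting $h/2 = \log k$ in the curvature integral then assembles the three pieces into \eqref{detK}.

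The only non-routine element of the proof is the justification that the small-time heat expansion of $\triangle_{sh}$ depends smoothly enough on $s$ for \eqref{dMellLap} and hence \eqref{vardet2} to hold uniformly on $[0,1]$. This is where one must invoke the elliptic pseudodifferential calculus on $\T^2_\theta$ recalled in \S\ref{s:conf-ind-tor}, which guarantees that the symbols (and thus the coefficients ${\rm a}_j$) depend smoothly on the dilaton; no further subtlety arises.
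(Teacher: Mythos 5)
Your proposal is correct and follows essentially the same route as the paper: the variation formula \eqref{vardet2} for the family $\triangle_{sh}$, the identification of $\zeta_{\triangle_{sh}}(h,0)$ via Theorem \ref{thmmain} applied to the dilaton $sh$ (keeping only the even/functions part, where the skew term cancels and the relevant functions are $K_0$ and $H_0$), the observation that $\vp_{s,n}(h)=-\frac{d}{ds}\log\vp_0(e^{-sh})$, and integration over $s\in[0,1]$. Your closing remark on the smooth $s$-dependence of the heat coefficients is exactly the point the paper delegates to the pseudodifferential calculus of \S\ref{s:conf-ind-tor}, so no gap remains.
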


We now show that this formula simplifies much further.
\begin{lem}\label{inside}
For $f(u)$ a function in Schwartz space one has
\begin{equation}\label{inside1}
    \vp_0(h f(\lmod)(a))=f(0)\vp_0(ha)\qqq a \in A_\theta^\infty.
\end{equation}
\end{lem}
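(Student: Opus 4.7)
The plan is to exploit the fact that $\nabla = \log \Delta$ is the inner derivation implemented by $-h$ (since $\Delta(x) = e^{-h} x e^{h} = e^{\mathrm{ad}(-h)}(x)$), together with the trace property of $\varphi_0$. The identity is really a statement that $h$ kills all ``positive frequency'' components of $f(\nabla)(a)$ under the trace.

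First I would verify the claim on the dense subspace of polynomials in $\nabla$. The key computation is that $\varphi_0(h\,[h,b]) = \varphi_0(h^2 b) - \varphi_0(hbh) = 0$ for every $b \in A_\theta^\infty$ by the trace property of $\varphi_0$. Since $\nabla(x) = -[h,x]$, this gives $\varphi_0(h \nabla(b)) = 0$, and applying this to $b = \nabla^{n-1}(a)$ yields
\begin{equation*}
\varphi_0(h\,\nabla^n(a)) \; = \; 0 \qquad \text{for all } n \geq 1.
\end{equation*}
Consequently for any polynomial $p(u) = \sum_n c_n u^n$ one has $\varphi_0(h\,p(\nabla)(a)) = c_0\, \varphi_0(h a) = p(0)\,\varphi_0(h a)$, which is the sought-for identity in the polynomial case.

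To handle a general Schwartz function $f$, I would pass to the Fourier representation of the functional calculus. Writing $f(u) = \int_{\mathbb{R}} \check{f}(t)\,e^{itu}\,dt$ and using $\Delta^{it}(a) = e^{-ith} a\, e^{ith}$, one gets
\begin{equation*}
f(\nabla)(a) \; = \; \int_{\mathbb{R}} \check{f}(t)\, \Delta^{it}(a)\, dt \; = \; \int_{\mathbb{R}} \check{f}(t)\, e^{-ith} a\, e^{ith}\, dt .
\end{equation*}
Applying $\varphi_0(h \,\cdot\,)$ and using the trace property together with the fact that $h$ commutes with $e^{\pm ith}$, one finds $\varphi_0(h\, e^{-ith} a\, e^{ith}) = \varphi_0(e^{ith} h\, e^{-ith} a) = \varphi_0(h a)$, so
\begin{equation*}
\varphi_0(h\, f(\nabla)(a)) \; = \; \varphi_0(h a)\int_{\mathbb{R}} \check{f}(t)\, dt \; = \; f(0)\, \varphi_0(h a),
\end{equation*}
which is exactly \eqref{inside1}.

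The only real subtlety is a technical one: justifying the interchange of $\varphi_0$ with the Fourier integral, and checking that the spectral-calculus definition of $f(\nabla)$ on $A_\theta^\infty$ agrees with the Fourier formula. This is straightforward because $\nabla$ is a bounded derivation on $A_\theta^\infty$ (being inner, implemented by $h \in A_\theta^\infty$), so $e^{it\nabla}$ is a one-parameter group of automorphisms with polynomial growth of its seminorms in $t$, and integration against the rapidly decreasing function $\check{f}$ converges in every Fr\'echet seminorm on $A_\theta^\infty$. The trace $\varphi_0$ is continuous on $A_\theta^\infty$, so the interchange is legitimate. With these points in place the proof is complete.
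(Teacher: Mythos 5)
Your proof is correct and follows essentially the same route as the paper: write $f(\nabla)$ via the Fourier transform as an integral of the modular automorphisms $\Delta^{it}$, then use that these fix $h$ and preserve the trace $\varphi_0$ to reduce to $f(0)\varphi_0(ha)$. The preliminary polynomial computation is a harmless (and not needed) warm-up, and your closing remarks on interchanging $\varphi_0$ with the integral address the same point the paper dispatches implicitly.
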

\proof The derivation $\lmod$ is given by the commutator with $-h$ and $\sigma_t=\modu^{-it}$
fixes $h$ and preserves the trace $\vp_0$. Thus writing $f$ as a Fourier transform
$$
\vp_0(h f(\lmod)(a))=\int g(t)\vp_0(h \sigma_t(a))dt=
\int g(t)\vp_0( \sigma_t(ha))dt=f(0)\vp_0(ha).
$$
\endproof
We thus get that
$$
\vp_0\left( h\big(s K_0(s\lmod)(\triangle (\frac{h}{2})\right)=\frac 12 K_0(0)
s\vp_0(h\triangle h)=\frac s6 \vp_0(h\triangle h)
$$
and integrating from $0$ to $1$ we obtain, under the hypothesis of Lemma \ref{inside0},
\begin{align}\label{inside2}
    \log \Det^\prime (\perta)&= \,\log \Det^\prime \D \, + \, \log \vp (1)- \frac{\pi}{12\tau_2} \vp_0(h\triangle h)  \\ \notag
& - \frac{\pi}{\tau_2} \int_0^1 \vp_0\left( h
s^2 H_0(s\nabla_1,s\nabla_2)(\square_\Re(\log k))\right) ds
\end{align}
Let us now simplify the last term of \eqref{inside2}. By construction the expression
$S=\square_\Re(\log k)$ can be expressed as a linear combination of squares of elements of $A_\theta^\infty$. Thus we really need to understand the quadratic form
\begin{equation}\label{quadform}
    Q(x)=\int_0^1 \vp_0\left( h
 H_0(s\nabla_1,s\nabla_2)(xx)\right)s^2 ds
\end{equation}
where writing $H_0$ as a Fourier transform
\begin{equation}\label{fourierh}
    H_0(u,v)=\int g(a,b)e^{-i(au+bv)}da db
\end{equation}
one has
$$
H_0(s\nabla_1,s\nabla_2)(xx)=\int g(a,b)\sigma_{sa}(x)\sigma_{sb}(x) da db.
$$
\begin{lem}\label{doublelem}
Let $k(u,v)$ be a Schwartz function, such that $k(v,u)=-k(u,v)$,  then
\begin{equation}\label{dlem1}
    \vp_0(h k(\nabla_1,\nabla_2)(xx))=-\frac 12\vp_0(\ell(\lmod)(x) x)
\end{equation}
where the function $\ell$ is given by
\begin{equation}\label{LL}
    \ell(u)=u \,k(u,-u)
\end{equation}
\end{lem}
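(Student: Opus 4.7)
The plan is to Fourier-expand $k$, use the antisymmetry to replace $k(\nabla_1,\nabla_2)(xx)$ by a commutator, apply a basic trace identity to pull out a factor of $\nabla$, and then collapse one of the two Fourier variables using the modular invariance of $\vp_0$. Write $k(u,v)=\int g(a,b)\,e^{-i(au+bv)}\,da\,db$; then the antisymmetry $k(v,u)=-k(u,v)$ transfers to $g(b,a)=-g(a,b)$. Since the modular automorphism group $\sigma_t:=e^{-it\nabla}$ is inner (hence $\vp_0\circ\sigma_t=\vp_0$) and fixes $h$, one has
\[
k(\nabla_1,\nabla_2)(xx)=\int g(a,b)\,\sigma_a(x)\sigma_b(x)\,da\,db=\frac12\int g(a,b)\,[\sigma_a(x),\sigma_b(x)]\,da\,db.
\]

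Next, the elementary identity $\vp_0(h[A,B])=\vp_0([h,A]B)=-\vp_0(\nabla(A)B)$ (which follows at once from the trace property of $\vp_0$ and the relation $-\nabla(A)=[h,A]$), combined with $[\nabla,\sigma_a]=0$, gives
\[
\vp_0\bigl(h[\sigma_a(x),\sigma_b(x)]\bigr)=-\vp_0\bigl(\sigma_a(\nabla x)\,\sigma_b(x)\bigr)=-\vp_0\bigl(\nabla(x)\,\sigma_{b-a}(x)\bigr),
\]
where the last equality uses the $\sigma_{-a}$-invariance of $\vp_0$ together with $\sigma_{-a}(h)=h$. Setting $c=b-a$ collapses the double integral to
\[
\vp_0\bigl(h\,k(\nabla_1,\nabla_2)(xx)\bigr)=-\frac12\int\phi(c)\,\vp_0\bigl(\nabla(x)\,\sigma_c(x)\bigr)\,dc,\qquad \phi(c):=\int g(a,a+c)\,da.
\]

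It remains to match this against the right-hand side. A short Fourier check identifies $\phi(c)=-\tilde g(c)$, where $\tilde g$ is defined by $k(u,-u)=\int\tilde g(s)\,e^{-isu}\,ds$; the sign is forced by $\tilde g$ being odd, which itself reflects $k(-u,u)=-k(u,-u)$. On the other hand, since $\ell(u)=u\,k(u,-u)$ and $\nabla$ commutes with every $\sigma_s$, one has $\ell(\nabla)(x)=\int\tilde g(s)\,\nabla(\sigma_s(x))\,ds$; pairing with $x$ and using the integration-by-parts rule $\vp_0(\nabla(A)B)=-\vp_0(A\nabla(B))$ together with the trace property yields
\[
\vp_0\bigl(\ell(\nabla)(x)\,x\bigr)=-\int\tilde g(s)\,\vp_0\bigl(\nabla(x)\,\sigma_s(x)\bigr)\,ds.
\]
Comparing the two one-variable integrals produces the claimed identity. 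The main technicality, rather than a real obstacle, is the justification of the Fourier manipulations for Schwartz $k$; this is routine since $\sigma_t$ acts smoothly and uniformly boundedly on $A_\theta^\infty$, making all integrals absolutely convergent.
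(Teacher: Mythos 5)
Your proof is correct and follows essentially the same route as the paper's: Fourier-expand $k$, use the antisymmetry of $g$ together with the trace property of $\vp_0$ to turn the expression into one involving the derivation $\nabla$, and collapse the two Fourier variables to one via the modular invariance of $\vp_0$. The only cosmetic difference is that the paper recognizes $\int g(a,b)\,\sigma_{a-b}(x)\,da\,db$ directly as $k(\nabla,-\nabla)(x)$, which bypasses your auxiliary one-variable transform $\tilde g$ and the sign bookkeeping via its oddness.
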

\proof
Let us define the function of two variables
\begin{equation}\label{innerf}
    w(a,b)=\vp_0(h \sigma_{a}(x)\sigma_{b}(x))
\end{equation}
so that, with $k(u,v)=\int g(a,b)e^{-i(au+bv)}da db$ one has
\begin{equation}\label{quadform1}
\vp_0(h k(\nabla_1,\nabla_2)(xx))= \int g(a,b) w( a,b)  da db
\end{equation}
One has, using $\sigma_c(h)=h$ for all $c$,
\begin{itemize}
  \item $w(a+c,b+c)=w(a,b)$ for all $c$.
  \item $w(a,b)-w(b,a)=-\vp_0(\lmod(\sigma_{a}(x))\sigma_{b}(x))$
\end{itemize}
where the last equality follows, using the trace property of $\vp_0$, from
$$
\vp_0(h \sigma_{a}(x)\sigma_{b}(x)-h\sigma_{b}(x)\sigma_{a}(x))=
\vp_0([h, \sigma_{a}(x)]\sigma_{b}(x))\,.
$$
Thus one gets, using the antisymmetry of $k(u,v)$ and $g(a,b)$,
$$
\vp_0(h k(\nabla_1,\nabla_2)(xx))=-\frac 12  \int g(a,b) \vp_0(\lmod(\sigma_{a-b}(x)) x)  da db
$$
Moreover
$$
k(u,-u)=\int g(a,b)e^{-i(au-bu)}da db
$$
and one gets
$$
\vp_0(h k(\nabla_1,\nabla_2)(xx))=-\frac 12 \vp_0(\lmod(k(\lmod,-\lmod)(x)) x)
$$
which is the required equality.
\endproof
We used the hypothesis that $k$ is a Schwartz function in order to use freely the Fourier transform but since the spectrum of the operator $\lmod$ is bounded this hypothesis is not needed as long as we deal with smooth functions.
It follows using $k(u,v)=H_0(su,sv)$  that
\begin{equation}\label{dlem2}
    \vp_0(h H_0(s\nabla_1,s\nabla_2)(xx))=\frac 1s\vp_0(L_0(s\lmod)(x) x)
\end{equation}
where the function $L_0$ is given by
\begin{equation}\label{LL1}
    L_0(u)=-\frac 12 u H_0(u,-u)
\end{equation}
We now need to compute the integral in the variable $s$ of \eqref{quadform}.
\begin{lem}\label{compute}
\begin{equation}\label{qcompute}
    Q(x)=\vp_0(K_2(\lmod)(x)x)
\end{equation}
where the function $K_2$ is given by
\begin{equation}\label{k2}
    K_2(v)=\frac{1}{3}+\frac{4}{v^2}-\frac{2 {\rm coth}(\frac{v}{2})}{v}
\end{equation}
\end{lem}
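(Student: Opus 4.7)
The plan is to reduce the double-variable quadratic form $Q$ of \eqref{quadform} to a single-variable functional calculus expression in $\lmod$ by applying the antisymmetry trick of Lemma \ref{doublelem}, and then to verify the resulting scalar identity by elementary computation.

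First, I would invoke \eqref{dlem2}, which is the output of Lemma \ref{doublelem} applied to the antisymmetric kernel $H_0(su,sv)$: it yields
\begin{equation*}
\vp_0\!\left(h\,H_0(s\nabla_1,s\nabla_2)(xx)\right) \;=\; \frac{1}{s}\,\vp_0\!\left(L_0(s\lmod)(x)\,x\right),
\qquad L_0(u) = -\tfrac{1}{2}\,u\,H_0(u,-u).
\end{equation*}
Substituting this into \eqref{quadform} gives
\begin{equation*}
Q(x) \;=\; \int_0^1 s\,\vp_0\!\left(L_0(s\lmod)(x)\,x\right) ds \;=\; \vp_0\!\left(\Big(\!\int_0^1 s\,L_0(s\lmod)\,ds\Big)(x)\,x\right),
\end{equation*}
where I swap the scalar Bochner integral with the functional calculus in $\lmod$ (justified since $\lmod$ is bounded and $L_0$ is smooth on a neighbourhood of its spectrum). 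So the lemma reduces to the scalar identity
\begin{equation*}
\int_0^1 s\,L_0(sv)\,ds \;=\; K_2(v), \qquad v\in\R.
\end{equation*}

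Second, I would make the substitution $w=sv$ to rewrite the left-hand side as $v^{-2}\!\int_0^v w\,L_0(w)\,dw$, so that the identity becomes
\begin{equation*}
\bigl(v^2 K_2(v)\bigr)' = v\,L_0(v), \qquad \lim_{v\to 0} v^2 K_2(v)=0.
\end{equation*}
The boundary condition is automatic because $K_2(v)=O(v^2)$ near $0$ (the small-$v$ expansion of $\coth(v/2)$ cancels the poles $4/v^2$ and leaves the leading $v^2/180$ term). For the derivative identity, I would first simplify $H_0(u,-u)$ from \eqref{valuel1} using $(e^u-1)^2=4e^u\sinh^2(u/2)$ and $1+4e^u+e^{2u}=2e^u(\ch(u)+2)$ to obtain the compact form
\begin{equation*}
L_0(v) \;=\; \frac{v\,\ch(v)+2v-3\sinh(v)}{3v\,\sinh^2(v/2)}.
\end{equation*}
Then, differentiating $v^2 K_2(v)=\frac{v^2}{3}+4-2v\coth(v/2)$ directly, using $(\coth(v/2))'=-\tfrac12\sinh^{-2}(v/2)$, and clearing denominators via $2\ch(v/2)\sinh(v/2)=\sinh(v)$ and $2\sinh^2(v/2)=\ch(v)-1$, gives exactly $v L_0(v)$.

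The only step requiring care is the interchange of the Bochner integral with functional calculus; once that is in place, the remaining work is elementary hyperbolic-function manipulation. I do not expect any genuine obstacle: the whole argument is a direct consequence of Lemma \ref{doublelem} plus a one-variable calculus verification, and the concrete form of $K_2$ is essentially forced by the antiderivative relation $(v^2 K_2)'=vL_0$.
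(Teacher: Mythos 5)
Your proposal is correct and follows essentially the same route as the paper: apply \eqref{dlem2} to reduce $Q(x)$ to $\int_0^1 s\,\vp_0(L_0(s\lmod)(x)x)\,ds$, rewrite the scalar integral as $v^{-2}\int_0^v uL_0(u)\,du$, and verify the closed form by checking that the derivative of $v^2K_2(v)$ equals $vL_0(v)$. Your compact expression for $L_0$ agrees with the paper's $L_0(u)=\frac{2}{3}-\frac{2\coth(u/2)}{u}+\sinh^{-2}(u/2)$, and your extra remarks on the boundary condition at $v=0$ and the interchange of integral with functional calculus only add detail the paper leaves implicit.
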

\proof One has, by \eqref{valuel1},
$$
L_0(u)=\frac{2}{3}-\frac{2 {\rm coth}\left(\frac{u}{2}\right)}{u}+\sinh\left(\frac{u}{2}\right)^{-2}
$$
Thus, using \eqref{dlem2},
$$
Q(x)=\int_0^1 \vp_0\left( h
 H_0(s\nabla_1,s\nabla_2)(xx)\right)s^2 ds=\int_0^1 \vp_0(L_0(s\lmod)(x) x)s ds
$$
which gives \eqref{qcompute} for
$$
K_2(v)=\int_0^1 L_0(sv)sds=v^{-2}\int_0^v uL_0(u)du,
$$
 and one checks that this agrees with \eqref{k2} by showing that
 the derivative of $v^2 K_2(v)$ is $vL_0(v)$.
\endproof
We can thus simplify \eqref{inside2} and obtain, using $\log k=\frac h2$,
\begin{equation}\label{inside3}
   \log \Det^\prime (\perta)= \,\log \Det^\prime \D \, + \, \log \vp (1)- \frac{\pi}{12\tau_2} \vp_0(h\triangle h)   - \frac{\pi}{4\tau_2}  \vp_0\left(
K_2(\nabla_1 )(\square_\Re(h))\right)
\end{equation}

\begin{rem} {\rm Since $\triangle$ is isospectral to the Dolbeault-Laplacian on $\T^2$
whose spectrum is the set of $|n+m\tau|^2$ for $n,m\in \Z$, $(n,m)\neq (0,0)$,
$\,\log \Det^\prime \D$ remains the same as for the ordinary $2$-torus,
and is computed by the Kronecker limit formula, as in \cite[Theorem 4.1]{RS2};
explicitly,
\begin{align*}
 \log \Det^\prime \D   \, =- \frac{d}{ds} \big|_{s=0} \sum_{(n,m)\neq (0,0)}|n+m\tau|^{-2s}=\, \newval ,
\end{align*}
where  $\eta$ is the Dedekind eta function
\begin{align*}
\eta (\tau) \, = \, e^{\frac{\pi \, i}{12} \, \tau} \, \prod_{n>0} \left(1-e^{2\pi i n \tau} \right) .
\end{align*}
}
\end{rem}
 \bigskip

Recalling that  the logarithm of analytic torsion for a complex curve~\cite{RS2} is given by
  $$
\frac{1}{2} \sum_{q=0}^1 (-1)^q \log \Det^\prime (\triangle^{(q,0)}) \, =
\,- \frac{1}{2} \log \Det^\prime (\pertb) \,  ,
 $$
it is perhaps not surprising that the Osgood-Phillips-Sarnak functional~\cite[\S 2.1]{OPS}
involves the negative of the log-determinant. By analogy, we define the
translation invariant functional $F$
on the space of selfadjoint elements of $A^\infty_\theta$ by the formula
\begin{align} \label{Func1}
F (h): = \,- \log \Det^\prime (\perta) + \log \vp (1)
 \, = \,- \log \Det^\prime \left(e^{\frac{h}{2}} \D e^{\frac{h}{2}}\right) + \log \vp_0 (e^{-h}) .
\end{align}
Its invariance under rescaling is due to the fact that $\zeta_{\D_\vp}(0) = -1$, \cf Theorem
  \ref{thmconfindlap}.  Indeed, since $\quad \triangle_{h+c} = e^c \triangle_h$ for any
  $\, c \in \R$, one has
  \begin{align*}
\zeta_{\triangle_{h+c}} (z) \, = \, e^{-cz} \, \zeta_{\triangle_h} (z),
\end{align*}
it follows that
\begin{align*}
\begin{split}
F (h+c) &= \, \zeta^\prime_{\triangle_{h+c}} (0) \, + \, \log \vp_0 (e^{-h-c}) \, = \,
 - c \, \zeta_{\triangle_{h}} (0)\, + \, \zeta^\prime_{\triangle_{h}} (0) \,  \\
 & \,+ \log \vp_0 (e^{-c})  \, + \, \log \vp_0 (e^{-h}) \, = \, F(h) .
\end{split}
\end{align*}

\begin{thm} \label{thmmain3}
 The functional $F(h)$ has the expression
\begin{equation} \label{Func2}
F (h)= \,-\newval
+ \frac{\pi}{4\tau_2}  \vp_0\left(
(K_2-\frac 13)(\nabla_1 )(\square_\Re(h))\right) .
\end{equation}
One has $F(h)\geq F(0)$ for all $h$ and equality holds if and only if
$h$ is a scalar.
\end{thm}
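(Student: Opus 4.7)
The plan is to derive the formula \eqref{Func2} directly from the simplified log-determinant expression \eqref{inside3}, and then reduce the inequality to positivity of $\tilde K_0$ via a spectral decomposition of $\nabla$ on $\cH_0$. First, substituting $F(h) = -\log\Det'(\perta) + \log\vp(1)$ into \eqref{inside3} and using $\log\Det'\D = \newval$ from the preceding Remark cancels the $\log\vp(1)$ terms and yields
\[
F(h) = -\newval + \tfrac{\pi}{12\tau_2}\vp_0(h\triangle h) + \tfrac{\pi}{4\tau_2}\vp_0\bigl(K_2(\nabla_1)(\square_\Re(h))\bigr).
\]
A term-by-term use of \eqref{int-by-parts} together with the trace property of $\vp_0$ yields $\vp_0(h\triangle h) = -\vp_0(\square_\Re(h))$; since scalar shifts of $K_2$ commute with the functional calculus of $\nabla_1$, absorbing $\tfrac{\pi}{12\tau_2}\vp_0(h\triangle h) = -\tfrac{\pi}{4\tau_2}\cdot\tfrac{1}{3}\vp_0(\square_\Re(h))$ into the $K_2$ term produces the combination $K_2-\tfrac{1}{3}$, giving \eqref{Func2}.

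For the inequality, the algebraic identity $K_2(v) - \tfrac{1}{3} = -\tfrac{1}{2}\tilde K_0(v)$, verified directly from $K_2(v) = \tfrac{1}{3} + \tfrac{4}{v^2} - \tfrac{2\coth(v/2)}{v}$ and $\tilde K_0(v) = -\tfrac{8}{v^2} + \tfrac{4\coth(v/2)}{v}$, together with $F(0) = -\newval$, gives
\[
F(h) - F(0) = -\tfrac{\pi}{8\tau_2}\,\vp_0\bigl(\tilde K_0(\nabla_1)(\square_\Re(h))\bigr),
\]
and it suffices to show this is non-negative with equality only for scalar $h$. Strict positivity of $\tilde K_0$ on $\R$ is a calculus check: writing $\tilde K_0(v) = \tfrac{4}{v}(\coth(v/2) - \tfrac{2}{v})$, the inequality $v\coth(v/2) > 2$ on $v>0$ follows from the all-positive Taylor expansion $2t\cosh(t) - 2\sinh(t) = \sum_{n\geq 1}\tfrac{4n}{(2n+1)!}t^{2n+1}$ (with $t = v/2$); evenness and $\tilde K_0(0) = 2/3$ handle the rest.

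For the bilinear form, decompose $\delta_j(h)\in\cH_0$ spectrally against the bounded self-adjoint operator $\nabla$ on $\cH_0$ (the inner derivation implemented by $-h$), writing $\delta_j(h) = \int \xi_j(\alpha)\,d\mu(\alpha)$ with $\nabla(\xi_j(\alpha)) = \alpha\,\xi_j(\alpha)$. The skew-adjointness $\delta_j(h)^* = -\delta_j(h)$ (from $h=h^*$ and \eqref{invol}) together with $\nabla(x^*) = -\nabla(x)^*$ forces $\xi_j(-\alpha) = -\xi_j(\alpha)^*$, and the trace property of $\vp_0$ makes $\vp_0(ab)$ vanish unless the $\nabla$-eigenvalues of $a,b$ are opposite. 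Combining these one rearranges
\[
-\vp_0\bigl(\tilde K_0(\nabla_1)(\square_\Re(h))\bigr) \;=\; \int \tilde K_0(\alpha)\,\Bigl(\bigl\|\xi_1(\alpha) + \Re(\tau)\,\xi_2(\alpha)\bigr\|^2 + \tau_2^2\,\|\xi_2(\alpha)\|^2\Bigr)\,d\mu(\alpha),
\]
with $\|\,\cdot\,\|$ the $\vp_0$-norm on $\cH_0$. The inner bracket is the Gram form of the matrix $\bigl(\begin{smallmatrix}1 & \Re\tau \\ \Re\tau & |\tau|^2\end{smallmatrix}\bigr)$ (determinant $\tau_2^2 > 0$, hence positive definite), so both factors are non-negative and the integral is $\geq 0$. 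Equality combined with strict positivity of $\tilde K_0$ and of the Gram form forces $\xi_j(\alpha)=0$ for all $\alpha,j$, so $\delta_1(h) = \delta_2(h) = 0$; since the center of $A_\theta$ with $\theta$ irrational is $\C\cdot 1$, this means $h$ is scalar. The converse follows from the scale-invariance of $F$ already established.

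The main obstacle is the spectral rearrangement displayed above. One must carefully track that $\nabla_1$ acts only on the first factor of each product $\delta_j(h)\delta_k(h)$ in $\square_\Re(h)$, so that $\tilde K_0(\nabla_1)(\delta_j(h)\delta_k(h)) = \tilde K_0(\nabla)(\delta_j(h))\cdot\delta_k(h)$. The cleanest implementation represents $\tilde K_0(\nabla)$ as a Fourier integral against the modular automorphism group $\sigma_t = e^{-it\nabla}$, and exploits $\sigma_t$-invariance of the trace $\vp_0$ to reduce the computation to the displayed spectral sum.
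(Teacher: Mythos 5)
Your proposal is correct and follows essentially the same route as the paper: derive \eqref{Func2} from \eqref{inside3} via the integration-by-parts identity $\vp_0(h\triangle h)=-\vp_0(\square_\Re(h))$, then reduce the inequality to the strict positivity of $\tilde K_0=-2(K_2-\tfrac13)$ combined with the positive-definiteness of the quadratic form attached to $\tau$. The only difference is presentational --- the paper packages the positivity by introducing the modified inner product $\langle a,b\rangle_k=\langle(\tfrac13-K_2)(\lmod)(a),b\rangle$ and writing $F(h)-F(0)=\tfrac{\pi}{4\tau_2}\langle\delta(h),\delta(h)\rangle_k$, whereas you diagonalize $\lmod$ spectrally; these are equivalent, and your explicit Taylor-series verification that $\tilde K_0>0$ is a welcome detail the paper leaves to inspection.
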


\begin{proof}  One obtains using \eqref{inside3} and the classical value for $h=0$
\begin{eqnarray} \label{Func2bis}
F (h)&=& \,-\newval
+\frac{\pi}{12\tau_2} \vp_0(h\triangle h) \\ \notag
&+& \frac{\pi}{4\tau_2}  \vp_0\left(
K_2(\nabla_1 )(\square_\Re(h))\right) .
\end{eqnarray}
Using integration-by-parts with respect to the derivation $\delta^\ast$, one sees that
\begin{align} \label{positive}
\vp_0\big(h\,  \triangle(h)\big)  \, = \, \vp_0\big(h\, \delta^\ast (\delta(h))\big)
\, = \, - \vp_0\big(\delta^\ast (h) \, \delta(h)\big) \, = \,
 \vp_0\big(\delta(h)^\ast  \, \delta(h)\big) .
\end{align}
Note that the skew term
$$
\vp_0(-i\tau_2\delta_2(h)^*\delta_1(h)+i\tau_2 \delta_1(h)^*\delta_2(h))
=-i\tau_2\vp_0([\delta_1(h),\delta_2(h)])=0
$$
vanishes. Thus one has
\begin{equation}\label{deldel}
 \vp_0\big(h\,  \triangle(h)\big)=-\vp_0(\square_\Re(h)).
\end{equation}
This, together with \eqref{Func2bis} gives \eqref{Func2}.

\begin{figure}
\begin{center}
\includegraphics[scale=0.7]{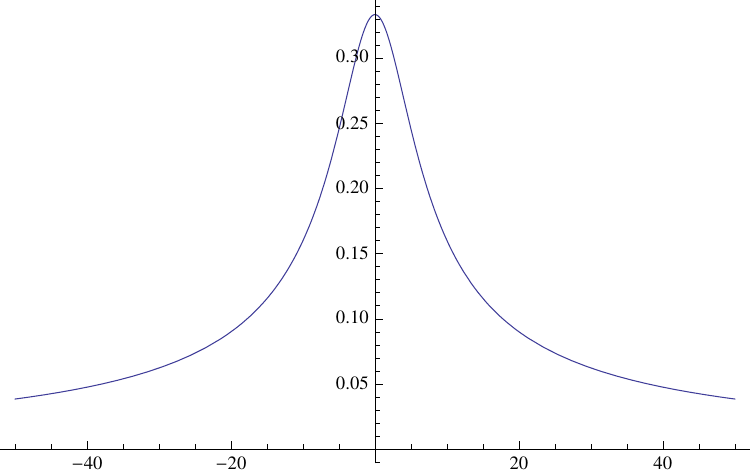}
\caption{Graph of the  function $\frac 13-K_2$, even and positive.}\label{K2function}
\end{center}
\end{figure}

In the Hilbert space $\cH_0$ with inner product \eqref{inprod}, the operator $\lmod$ which is given by $a\mapsto -[h,a]$ is bounded and self-adjoint since
$$
\langle \lmod(a),b\rangle =-\vp_0(b^*[h,a])=
\vp_0((-[h,b])^*a)=\langle a, \lmod(b)\rangle
$$
Since the function $\frac 13-K_2$ is even and strictly positive, it follows that the equality
\begin{equation}\label{scalk}
    \langle a,b\rangle_k=\langle (\frac 13-K_2)(\lmod)(a),b\rangle
\end{equation}
defines a non degenerate positive symmetric inner product on $\cH_0$. Now by \eqref{Func2}, and the skew adjointness $\delta_j(h)^*=-\delta_j(h)$ one has
$$
 \vp_0\left(
(K_2-\frac 13)(\nabla_1 )(\square_\Re(h))\right)=
\langle \delta_1(h),\delta_1(h)\rangle_k + 2 \Re(\tau) \langle \delta_1(h),\delta_2(h)\rangle_k+ |\tau|^2 \langle \delta_2(h),\delta_2(h)\rangle_k
$$
and thus
\begin{equation}\label{pospos}
F (h)= F(0)
+ \frac{\pi}{4\tau_2}\langle \delta(h),\delta(h)\rangle_k
\end{equation}
This shows that $F(h)\geq F(0)$ and moreover the equality holds only if $\delta(h)=0$ which implies that $h$ is a constant.
\end{proof}

 \begin{rem}{\em Note that
 the first term in the right hand side of \eqref{Func2bis} matches the
 expression of the functional in the commutative case (\cf \cite[\S 3]{OPS}),
while the second term is of a purely modular nature and is highly non-linear.}
 \end{rem}

 \bigskip

 \subsection{Gradient flow and scalar curvature} \hfill \medskip

Identifying the tangent space to the
dilatons with the selfadjoint elements of $A_\theta^\infty$, and
using the  inner product given by $\vp_0$, we
shall define the gradient of the
functional $F$ by means of the G\^ateaux differential:
\begin{align} \label{gradF}
\vp_0 (a\,  \grad_h F  ) \, = \,
\langle \grad_h F , a \rangle \, = \, \frac{d}{d\ve}\big|_{\ve=0}F(h + \ve a) , \qquad
\forall \, a = a^\ast \in A_\theta^\infty ,
\end{align}

\begin{thm}  \label{gradthm}
The gradient of $F$ is given by the expression
 \begin{equation} \label{gradF2}
 \grad_h F = \frac{\pi}{4\tau_2}\left( \tilde K_0(\lmod)(\lapa(h))+\frac 12\tilde H_0(\nabla_1,\nabla_2)(\square_\Re (h))\right)
 \end{equation}
 where the functions $\tilde K_0$ and $\tilde H_0$ are directly related to $K_0$, $K_2$ and $H_0$ by
 \begin{equation}\label{tildekh}
  \tilde K_0(s)=4\frac{\sinh(s/2)}{s} K_0(s)= -2\left(K_2(s)-\frac 13\right)
 \end{equation}
 and
 \begin{equation}\label{tildekh1}
  \tilde H_0(s,t))=4\frac{\sinh((s+t)/2)}{s+t} H_0(s,t).
 \end{equation}
\end{thm}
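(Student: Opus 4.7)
The plan is to differentiate $F(h) = -\log\Det^\prime(\perta) + \log\vp(1)$ in an arbitrary direction $a = a^\ast \in A_\theta^\infty$ and to read $\grad_h F$ off the defining relation \eqref{gradF}. The contribution from $\log\vp(1) = \log\vp_0(e^{-h})$ is immediate: Duhamel's formula for $\partial_\varepsilon e^{-(h+\varepsilon a)}$ together with the trace property of $\vp_0$ collapses the $u$-integral to $-\vp_0(ae^{-h}) = -\vp(a)$, yielding $\partial_\varepsilon\big|_{0}\log\vp(1) = -\vp_n(a)$.

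For the log-determinant I adapt the Duhamel argument of \eqref{useduhamel}--\eqref{exchange} to the generic variation $h + \varepsilon a$. Setting $\triangle_\varepsilon = e^{(h+\varepsilon a)/2}\,\lapa\, e^{(h+\varepsilon a)/2}$ and $\dot k := \partial_\varepsilon\big|_{0}\, e^{(h+\varepsilon a)/2}$, I decompose $\partial_\varepsilon\big|_{0}\triangle_\varepsilon = \beta\,\perta + \perta\,\alpha$ with $\alpha := k^{-1}\dot k$, $\beta := \dot k\, k^{-1}$; cyclicity of the trace and $\perta\,e^{-t\perta} = -\tfrac{d}{dt}e^{-t\perta}$ give
\begin{equation*}
\partial_\varepsilon\big|_{0} \Tr\bigl(e^{-t\triangle_\varepsilon}\bigr) \;=\; t\,\frac{d}{dt}\Tr\bigl((\alpha+\beta)\,e^{-t\perta}\bigr),
\end{equation*}
and the Mellin argument of \eqref{dMellLap}--\eqref{vardet2} then yields $\partial_\varepsilon\big|_{0}\log\Det^\prime(\triangle_\varepsilon) = \zeta_{\perta}(\alpha+\beta,0) = {\rm a}_2(\alpha+\beta,\perta) - \vp_n(\alpha+\beta)$ via \eqref{zetaa0}. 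The structural step is to recognize $\alpha+\beta$ as a function of $\lmod$ applied to $a$: the integral formula $\dot k = \tfrac12\int_0^1 e^{uh/2}a\, e^{(1-u)h/2}\,du$ combined with $\sigma_{iv/2}(a) = e^{v\lmod/2}(a)$ gives
\begin{equation*}
\alpha+\beta \;=\; \int_0^1 \cosh(v\lmod/2)(a)\,dv \;=\; \frac{2\sinh(\lmod/2)}{\lmod}(a).
\end{equation*}
Since $\vp_n$ is $\sigma_t$-invariant, Lemma \ref{inside} gives $\vp_n(f(\lmod)(a)) = f(0)\vp_n(a)$; for $f(s) = 2\sinh(s/2)/s$ one has $f(0)=1$, so $\vp_n(\alpha+\beta) = \vp_n(a)$ and this cancels the $-\vp_n(a)$ coming from $\log\vp(1)$.

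It then suffices to rewrite ${\rm a}_2(\alpha+\beta,\perta)$ via the local formula \eqref{a2term}. Setting $X := K_0(\lmod)(\lapa(h)) + \tfrac12 H_0(\nabla_1,\nabla_2)(\square_\Re(h))$, linearity in the first argument gives ${\rm a}_2(\alpha+\beta,\perta) = -\tfrac{\pi}{2\tau_2}\vp_0\bigl((\alpha+\beta)\, X\bigr)$. The trace property makes $\lmod$ antisymmetric under $\vp_0$, so for any even function $f$ one has $\vp_0(f(\lmod)(a)\, y) = \vp_0(a\, f(\lmod)(y))$. Applying this with $f(s) = 2\sinh(s/2)/s$ transports the Duhamel factor across $\vp_0$, and the defining identities \eqref{tildekh}--\eqref{tildekh1} (with $\lmod = \nabla_1+\nabla_2$ on products) yield $\tfrac{2\sinh(\lmod/2)}{\lmod}K_0(\lmod) = \tfrac12 \tilde K_0(\lmod)$ and $\tfrac{2\sinh(\lmod/2)}{\lmod}H_0(\nabla_1,\nabla_2) = \tfrac12 \tilde H_0(\nabla_1,\nabla_2)$. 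Reassembling gives exactly the claimed formula for $\grad_h F$, and the identity $\tilde K_0(s) = -2(K_2(s)-\tfrac13)$ is then a direct algebraic verification from \eqref{k0funct} and \eqref{k2}.

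The main obstacle is the identification $\alpha + \beta = \tfrac{2\sinh(\lmod/2)}{\lmod}(a)$: one must symmetrize the two Duhamel contributions into a $\cosh$ integral of $e^{v\lmod/2}(a)$ over $v \in [0,1]$, which is exactly what converts bare operator bookkeeping into a function of the modular derivation. This $\tfrac{2\sinh(\lmod/2)}{\lmod}$ factor---trivial in the ray-variation case where $\alpha+\beta = h$---is precisely what upgrades $K_0$ and $H_0$ to $\tilde K_0$ and $\tilde H_0$ in the gradient.
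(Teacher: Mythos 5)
Your proposal is correct and follows essentially the same route as the paper: Duhamel variation of the heat trace, the Mellin/zeta argument, cancellation of the kernel-projection term against the area term, and then Theorem \ref{thmmain} with the smearing factor $\tfrac{2\sinh(\lmod/2)}{\lmod}$ transported across $\vp_0$ to upgrade $K_0, H_0$ to $\tilde K_0, \tilde H_0$. The only cosmetic difference is that you package the Duhamel derivative as $\alpha+\beta=\tfrac{2\sinh(\lmod/2)}{\lmod}(a)$ up front and move it by antisymmetry of $\lmod$ under $\vp_0$, whereas the paper keeps the explicit integral $\int_{-1}^1 e^{uh/2}\,a\,e^{-uh/2}\,du$ and conjugates the curvature density at the end; these are the same computation.
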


 \begin{proof}
Consider  the $1$-parameter family of operators
 \begin{equation*}
 \ttd \, =\,e^{ \frac{h+\ve a}{2} }\,\triangle\,e^{\frac{h+\ve a}{2} }       , \qquad  \ve > 0 .
\end{equation*}
The Duhamel (also the expansional) formula implies that
  \begin{align*}
 \frac{d}{d\ve} \big|_{\ve=0} e^{\frac{h+\ve a}{2}}   \,= \,
 \frac{1}{2}\, \int_0^1 e^{\frac{u h}{2}} \, a \, e^{\frac{(1-u)h}{2}} \, du ,
 \end{align*}
 hence
   \begin{align*}
   \begin{split}
 \frac{d}{d\ve} \big|_{\ve=0} \ttd   \,= \,
\frac{1}{2} \int_0^1 e^{\frac{u h}{2}} \, a \, e^{-\frac{uh}{2}} \, du \,
  \D_h \, + \, \frac{1}{2} \D_h \, \int_0^1 e^{-\frac{(1-u) h}{2}} \, a \, e^{\frac{(1-u)h}{2}} \, du .
 \end{split}
 \end{align*}

It follows that
\begin{align}\label{computederiv}
   \begin{split}
&\frac{d}{d \ve}  \big|_{\ve=0} \Tr \left(e^{-t  \ttd } \right) \,= \,
- t \, \Tr \left(\frac{d}{d\ve} \big|_{\ve=0} (\ttd) \,e^{-t \D_h} \right) \,=  \\
&= \,- \frac{t}{2} \, \Tr \left(\int_0^1 e^{\frac{u h}{2}} \, a \, e^{-\frac{uh}{2}}\, du \,
\D_h \, e^{-t  \D_h }  \right) \,
\,- \frac{t}{2} \, \Tr \left(\D_h \,\int_0^1 e^{-\frac{(1-u) h}{2}} \, a \, e^{\frac{(1-u)h}{2}}\, du \,
e^{-t  \D_h } \right) \\
&= \,- \frac{t}{2}\, \Tr \left(\D_h \,e^{-t  \D_h } \,\int_0^1 e^{\frac{u h}{2}} \, a \, e^{-\frac{uh}{2}} \, du\right) \, - \,  \frac{t}{2}\,
\Tr \left(e^{-t  \D_h } \,\D_h \,\int_0^1 e^{-\frac{u h}{2}} \, a \, e^{\frac{uh}{2}} \, du\right) \\
&= \,- \frac{t}{2}\,
\Tr \left(\D_h \,e^{-t  \D_h } \, \int_{-1}^1 e^{\frac{u h}{2}} \, a \, e^{-\frac{uh}{2}} \, du\right)
\, = \,\frac{t}{2} \, \frac{d}{dt}\, \Tr \left(e^{-t  \D_h } \,\int_{-1}^1 e^{\frac{u h}{2}} \, a \, e^{-\frac{uh}{2}} \, du\right) .
  \end{split}
 \end{align}

Differentiating at $\ve = 0$ the $1$-parameter family of the zeta functions
\begin{align*}
\zeta_{\ttd} (z) = \Tr \left( (1-P_\ve)\ttd^{-z} \right) , \quad  \Re z >  p ,
\end{align*}
and taking into account the above identity, one obtains for $\Re z >  p\,$,
\begin{align*}
\begin{split}
\frac{d}{d\ve} \big|_{\ve=0} \zeta_{\ttd} (z)&= \, \frac{1}{\Gamma (z)} \int_0^\infty t^{z-1}
 \,\frac{d}{d\ve} \big|_{\ve=0} \Tr \left(e^{-t \ttd} (1-P_\ve)\right) \, dt  \\
 &=  \, \frac{1}{\Gamma (z)} \int_0^\infty t^{z-1}
 \,\frac{d}{d\ve} \big|_{\ve=0} \Tr \left(e^{-t \ttd}\right) \, dt  \\
  &=  \,\frac{1}{2\, \Gamma (z)} \int_0^\infty t^{z} \, \frac{d}{dt}
 \,\Tr \left( e^{-t  \D_h } \,\int_{-1}^1 e^{\frac{u h}{2}} \, a \, e^{-\frac{uh}{2}}  \, du \right) \, dt \\
&= \frac{1}{2\, \Gamma (z)} \, t^{z}  \, \Tr \left(e^{-t \D_h} (1-P_h)\,
\int_{-1}^1 e^{\frac{u h}{2}} \, a \,e^{-\frac{uh}{2}}  \, du \right) \bigg|_0^\infty \\
&- \, \frac{z}{2\, \Gamma (z)} \int_0^\infty t^{z-1} \,  \Tr\left(h e^{-t \D_h} (1-P_{h})
\int_{-1}^1 e^{\frac{u h}{2}} \, a \,e^{-\frac{uh}{2}} \, du \right)\, dt \\
 &=\, -   \frac{z}{2\, \Gamma (z)} \int_0^\infty t^{z-1} \,  \Tr\left(e^{-t \D_h} (1-P_{h})
\int_{-1}^1 e^{\frac{u h}{2}} \, a \,e^{-\frac{uh}{2}}  \, du \right)\, dt \\
&= \, - \frac{z}{2} \, \zeta_{\D_h} \left( \int_{-1}^1 e^{\frac{u h}{2}} \, a \, e^{-\frac{uh}{2}}  \, du ,
\, z\right).
  \end{split}
\end{align*}
Now taking the derivative
$\displaystyle \frac{d}{dz} \bigg|_{z=0}$ gives
\begin{align*}
\begin{split}
- \,\frac{d}{d\ve} \bigg|_{\ve=0} \zeta^\prime_{\ttd} (0)&= \,  \frac{1}{2}\,
\zeta_{\D_h} \left(\int_{-1}^1 e^{\frac{u h}{2}} \, a \,e^{-\frac{uh}{2}}  \, du , \, 0\right) \\
&=  \frac{1}{2}\,{\rm a}_2 \left(\int_{-1}^1 e^{\frac{u h}{2}} \, a \, e^{-\frac{uh}{2}} \, du , \D_h\right)  -
 \frac{1}{2}\,\Tr \left(P_h \int_{-1}^1 e^{\frac{u h}{2}} \, a \, e^{-\frac{uh}{2}} \, du \right) \\
&=  \frac{1}{2}\,{\rm a}_2 \left(\int_{-1}^1 e^{\frac{u h}{2}} \, a \, e^{-\frac{uh}{2}} \, du , \D_h\right)  -
 \frac{1}{2}\,\frac{\vp_0\left(\int_{-1}^1 e^{\frac{u h}{2}} \, a \, e^{-\frac{uh}{2}} \, du \, e^{-h}\right)}{\vp_0 (e^{-h})} \\
&= \frac{1}{2}\,{\rm a}_2 \left(\int_{-1}^1 e^{\frac{u h}{2}} \, a \, e^{-\frac{uh}{2}} \, du , \D_h\right)  -
 \, \frac{\vp_0 (a e^{-h} )}{\vp_0 (e^{-h})}.
\end{split}
\end{align*}

 The calculation of the derivative corresponding to the area term is very easy.
 Indeed, since $\vp_0$ is a trace, one simply has
 \begin{align*}
 \frac{d}{d\ve}\big|_{\ve=0} \vp_0 (e^{-h-\ve a}) \, = \, - \vp_0 (a e^{-h}) ,
\end{align*}
 hence
  \begin{align*}
  \frac{d}{d\ve}\big|_{\ve=0} \log \vp_0 (e^{-h-\ve a})
= - \frac{\vp_0 (a e^{-h})}{ \vp_0 (e^{-h})} .
\end{align*}

Summing up and recalling that by its very definition, \cf \eqref{Func1},
 \begin{align*}
F (h+\ve a):\, = \,
 - \log \Det^\prime \left(e^{\frac{h+\ve a}{2}} \D e^{\frac{h+\ve a}{2}}\right) +
  \log \vp_0 (e^{-h-\ve a}) ,
\end{align*}
one concludes that
 \begin{align} \label{gradF3}
\vp_0 (a\,  \grad_h F ) \, = \, - \frac{1}{2}\, \,
{\rm a}_2 \left(\int_{-1}^1 e^{\frac{u h}{2}} \, a \, e^{-\frac{uh}{2}} \, du , \,  \D_h\right) .
 \end{align}
 To obtain the claimed expression of the gradient we
 appeal to Theorem \ref{thmmain}, move one of the exponential factors
 under  the trace $\vp_0$, and get
 $$
 \grad_h F  \, = \frac{\pi}{2\tau_2} \int_{-1}^1 e^{\frac{u h}{2}}\left(K_0(\lmod)(\lapa(\log k))+H_0(\nabla_1,\nabla_2)(\square_\Re (\log k)\right)
 e^{-\frac{uh}{2}} \, du
 $$
 One has
 $$
 \lmod(x)=-[h,x]\,, \ \ \int_{-1}^1 e^{\frac{u \lmod}{2}}du=4\frac{\sinh(\lmod/2)}{\lmod}
 $$
 thus the function $K_0$ gets multiplied by $4\sinh(s/2)/s$ and becomes
 $$
 \tilde K_0(s)=4\frac{\sinh(s/2)}{s} K_0(s)=\frac{4 \left(2+e^s (-2+s)+s\right)}{\left(-1+e^s\right) s^2}
 =4\left(\frac{{\rm coth}(s/2)}{s}-2 s^{-2}\right)
 $$
 similarly, one has
 $$
 \int_{-1}^1 e^{\frac{u (\nabla_1 + \nabla_2)}{2}}du=4\frac{\sinh((\nabla_1 + \nabla_2)/2)}{(\nabla_1 + \nabla_2)}
 $$
 which gives \eqref{tildekh1}.
 \end{proof}

  \begin{rem}\label{otherlap}{\em One may wonder if there is a similar manner of
  using   the local formula for $\pertc$ of Theorem \ref{thmmain}
  to handle
  $\log \Det^\prime (\pertc)   =  - \zeta^\prime_{\pertc} (0)$ .  To this end, we start
from the equality $\pertc = \delta^* k^2 \delta$ and the $1$-parameter family
 \begin{equation*}
 \pertd := \,\delta^*\,\,e^{h + \ve a}  \,  \delta   , \qquad  \epsilon \in \R .
\end{equation*}
Since
 \begin{align*}
 \frac{d}{d\ve} \big|_{\ve=0} e^{h+\ve a}   \,= \,
 \int_0^1 e^{u h} \, a \, e^{(1-u)h} \, du ,
 \end{align*}
one has
   \begin{align*}
   \begin{split}
 \frac{d}{d\ve} \big|_{\ve=0} \pertd    \,= \,
 \int_0^1 \delta^\star \, e^{u h} \, a \, e^{(1-u)h} \,  \delta  \, du .
 \end{split}
 \end{align*}
 The Duhamel formula
  \begin{equation*}
 \frac{d}{d\ve} \big|_{\ve=0} e^{-t \pertd}   \, = \, -t \int_0^1 e^{-vt \pertc} \,
\frac{d}{d\ve} \big|_{\ve=0} (\pertd)  \, e^{- (1-v)t \pertc} \, dv .
\end{equation*}
implies
\begin{align*}
   \begin{split}
&\frac{d}{d \ve}  \big|_{\ve=0} \Tr \left(e^{-t  \pertd } \right) \,= \,
- t \, \Tr \left(\frac{d}{d\ve} \big|_{\ve=0} (\pertd) \,e^{-t \pertc} \right) \,=  \\
&= -t \int_0^1\Tr \left( \delta^\star \, e^{u h} \, a \, e^{(1-u)h} \,  \delta
 \,e^{-t \pertc} \right)  \, du  \,= \,
  -t \int_0^1\Tr \left( e^{u h} \, a \, e^{(1-u)h} \,  \delta
 \,e^{-t \pertc}  \delta^\star \right)  \, du \\
 &=\, -t \int_0^1\Tr \left((e^{\frac{(2u-1) h}{2}} \, a \, e^{\frac{(1- 2u)h}{2}}) \, (e^{\frac{h}{2}} \,
   \delta
 \,e^{-t \pertc}  \delta^\star\, e^{\frac{h}{2}}) \right)  \, du \\
  &=\, -\frac{t}{2} \int_{-1}^1\Tr \left((e^{\frac{v h}{2}} \, a \, e^{-\frac{vh}{2}}) \,
 (e^{\frac{h}{2}} \,
   \delta
 \,e^{-t \pertc}  \delta^\star\, e^{\frac{h}{2}}) \right)  \, dv .
   \end{split}
 \end{align*}
Notice now that
\begin{align*}
e^{\frac{h}{2}} \,
   \delta
 \,e^{-t \pertc}  \delta^\star\, e^{\frac{h}{2}}  \, = \,
e^{-t \perta} \, \perta  ,
\end{align*}
reverting us to the expression \eqref{computederiv},
 which involves the local formula for $\perta$ and not for $\pertc$. Thus,
 one ends up by merely recovering the same expression \eqref{gradF2} of the gradient.

% A possible strategy would be to put the term $\delta^\star\, e^{\frac{h}{2}}$ in front using the % trace property and permute this term with $(e^{\frac{v h}{2}} \, a \, e^{-\frac{vh}{2}})$ but we % shall not pursue this here.
  }
 \end{rem}
 \bigskip

  Replacing in the definition \eqref{gradF} the fixed inner product
  by the running one, one could alternatively define the gradient by
  means of the running inner product, as follows:
  \begin{align} \label{newgradF}
\langle \Grad_h F , a \rangle_\vp \, = \, \vp_0 (a\,  \Grad_h F \, e^{-h} )
 \, = \, \frac{d}{d\ve}\big|_{\ve=0}F(h + \ve a) , \qquad
\forall \, a = a^\ast \in A_\theta^\infty .
\end{align}
Then Theorem \ref{gradthm}  gives
\begin{equation} \label{gradF4}
\Grad_h F  \, = \,\frac{\pi}{4\tau_2}\left( \tilde K_0(\lmod)(\lapa(h))+\frac 12\tilde H_0(\nabla_1,\nabla_2)(\square_\Re (h))\right)\, e^h.
 \end{equation}
  Based on the analogy with the standard torus (\comp \cite[\S 3, (3.8)]{OPS}), the
right hand side
\begin{equation} \label{scalar}
K_\vp \, = \,\frac{\pi}{4\tau_2}\left( \tilde K_0(\lmod)(\lapa(h))+\frac 12\tilde H_0(\nabla_1,\nabla_2)(\square_\Re (h))\right)\, e^h
 \end{equation}
can be taken as the appropriate
 definition of the \textbf{scalar curvature} $K_\vp$
of the conformal metric on the noncommutative torus associated to the
given dilaton. The evolution equation for the conformal factor $-h$ becomes
\begin{align} \label{gradflow2}
 \frac{\partial h}{\partial t} \, = \, K_\vp.
\end{align}

 Unlike the commutative case, the corresponding flow of inner products is not
 given by the same differential equation. Denoting by $g_\vp$ the Hermitian form
\begin{align*}
g_\vp (a, b) := \, \langle a, b \rangle_\varphi \, =\,  \varphi (b^\ast  a) \, = \,
 \vp_0 (b^\ast  a e^{-h}) , \quad \forall \, a, b \in A_{\theta} ,
\end{align*}
one has
\begin{align*}
\begin{split}
 \frac{\partial g_\vp (a, b)}{\partial t}& = \,  \vp_0 \left(b^\ast  a
  \frac{\partial e^{-h}}{\partial t}\right) \, = \, - \vp_0 \left(b^\ast  a
   \int_0^1e^{-uh}  \frac{\partial h}{\partial t} e^{(u-1)h}\, du \right) \\
& =  \,- \vp_0 \left(b^\ast  a   \int_0^1e^{-uh}  K_\vp e^{uh} \, du\, e^{-h} \right) \,
= \,- \langle a \int_0^1e^{-uh} K_\vp e^{uh}\, du \, , b \rangle_\varphi \, .
\end{split}
\end{align*}
Denoting by $R_\vp$ the resulting Hermitian form
\begin{align} \label{Ricci}
R_\vp ( a, b) \, =\,    \langle a \frac{e^{\lmod} -1}{\lmod}( K_\vp) , b \rangle_\varphi \,
 , \quad \forall \, a, b \in A_{\theta} ,
\end{align}
we conclude that the metric associated to $\vp$
has evolution equation
\begin{align} \label{Ricciflow}
 \frac{\partial g_\vp}{\partial t} \, = \, - R_\vp .
\end{align}
Since the average curvature 
$$
\vp (K_\vp) \, =\, \vp_0 (\grad_h F)\, = \, \frac{d}{d\ve}\big|_{\ve=0}F(h + \ve)  \, = \, 0 ,
$$ 
the equation \eqref{Ricciflow} is
precisely the analogue of Hamilton's Ricci flow~\cite{Ham} for the standard torus.
This justifies viewing the Hermitian form $R_\vp$ as
the \textbf{Ricci curvature} of $\T^2_\theta$ endowed with the inner product
$g_\vp$.

\bigskip
 
\subsection{Functional relation between $\tilde K_0$ and $\tilde H_0$}\label{functrel}

We shall now explain how to perform the direct computation of the gradient of an expression of the form
$$
\vp_0\left(
G(\nabla_1 )(\square_\Re(h))\right)
$$
We shall then show how this, together with Theorem \ref{gradthm}, implies the following relation
between $\tilde K_0$ and $\tilde H_0$,
\begin{equation}\label{transforule}
  - \frac 12 \tilde H_0(s_1,s_2)=\frac{\tilde K_0(s_2)-\tilde K_0(s_1)}{s_1+s_2}+\frac{\tilde K_0(s_1+s_2)-\tilde K_0(s_2)}{s_1}-\frac{\tilde K_0(s_1+s_2)-\tilde K_0(s_1)}{s_2}
\end{equation}
This relation can then be checked directly and  gives the
 following decomposition
 $$
- \frac 18 \tilde H_0(s,t) =\frac{2 (s-t)}{s t (s+t)^2}$$
$$+\frac{{\rm coth}\left(\frac{s}{2}\right)}{s t}-\frac{{\rm coth}\left(\frac{s}{2}\right)}{s (s+t)}-\frac{{\rm coth}\left(\frac{t}{2}\right)}{s t}+\frac{{\rm coth}\left(\frac{t}{2}\right)}{t (s+t)}+\frac{{\rm coth}\left(\frac{s+t}{2}\right)}{s (s+t)}-\frac{{\rm coth}\left(\frac{s+t}{2}\right)}{t (s+t)}\,.
 $$
 The fact that \eqref{transforule} can be proven on a priori ground gives a handle on the complicated two variable functions which appear in Theorem \ref{thmmain}
since by \eqref{tildekh1} one can deduce the function $H_0$ from $\tilde H_0$. Note moreover that the function $\frac 18\tilde K_0$ is the generating function for Bernoulli numbers since one has
\begin{equation}\label{bernou}
   \frac 18\tilde K_0(u)=\sum_1^\infty \frac{B_{2n}}{(2n)!}u^{2n-2}\,.
\end{equation}

\begin{thm}\label{directgrad}
Let $G(u)$ be an even Schwartz function, then with
\begin{equation}\label{defnfunc}
   \Omega(h)=\vp_0\left(
G(\nabla_1 )(\square_\Re(h))\right)
\end{equation}
one has
\begin{equation}\label{firsttransfo0}
 \frac{d}{d\ve} \big|_{\ve=0} \Omega(h+\epsilon a)\,= \,
 -2\vp_0(a G(\lmod)(\lapa(h)))+
\vp_0(a \,\omega_G(\nabla_1,\nabla_2)(\square_\Re(h)))
 \end{equation}
 where the function $\omega_G(s,t)$ is given by
 \begin{equation}\label{firsttransfo1}
   \frac 12 \omega_G(s_1,s_2)=\frac{G(s_2)-G(s_1)}{s_1+s_2}+\frac{G(s_1+s_2)-G(s_2)}{s_1}-\frac{G(s_1+s_2)-G(s_1)}{s_2}
\end{equation}
\end{thm}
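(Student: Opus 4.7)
The plan is to compute $\frac{d}{d\epsilon}\big|_{\epsilon=0}\Omega(h+\epsilon a)$ by Fourier-representing $G$ and differentiating term by term. Writing $G(u)=\int \hat g(s)\,e^{-isu}\,ds$, using $e^{-is\nabla}(x)=\sigma_s(x)=e^{ish}xe^{-ish}$, and decomposing $\square_\Re(h)=\sum_{ij}c_{ij}\,\delta_i(h)\delta_j(h)$ with symmetric matrix $(c_{ij})$ of entries $1,\Re\tau,\Re\tau,|\tau|^2$, one has
\[
\Omega(h)=\sum_{ij}c_{ij}\int\hat g(s)\,\vp_0\bigl(e^{ish}\,\delta_i(h)\,e^{-ish}\,\delta_j(h)\bigr)\,ds.
\]
Four sources of $\epsilon$-variation arise: the two derivative factors $\delta_i(h)$, $\delta_j(h)$, and the two exponentials $e^{\pm ish}$, which by Duhamel satisfy $\frac{d}{d\epsilon}\big|_{\epsilon=0}e^{\pm is(h+\epsilon a)}=\pm is\int_0^1 e^{\pm ishu}\,a\,e^{\pm ish(1-u)}\,du$. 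In every contribution, the trace property of $\vp_0$ brings $a$ to the leftmost position; and because all exponentials are scalar functions of the single element $h$, the remaining $h$-exponentials collapse via $e^Ae^B=e^{A+B}$ into a clean product $\sigma_{\alpha(s,u)}(\delta_i h)\,\sigma_{\beta(s,u)}(\delta_j h)$.

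For the two derivative variations, I further integrate by parts (using $\delta_i^\ast=-\delta_i$) to move the derivation off $a$ onto $\sigma_{\pm s}(\delta_\bullet h)$, then expand by Leibniz together with the Duhamel formulas for $\delta_i(e^{\pm ish})$. This yields, in each case, a \emph{central} term of the form $\sigma_{\pm s}(\delta_i\delta_j h)$ plus two \emph{commutator} integrals in $u$ of the same $\sigma_\alpha\sigma_\beta$ shape as above. Summing the two central contributions against $c_{ij}$, invoking $\sum_{ij}c_{ij}\,\delta_i\delta_j=\lapa$, and using the evenness $G(-\nabla)=G(\nabla)=G(\lmod)$, produces precisely the first term $-2\vp_0\bigl(a\,G(\lmod)(\lapa(h))\bigr)$ of \eqref{firsttransfo0}.

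After symmetrizing in $i\leftrightarrow j$ (allowed since $c_{ij}=c_{ji}$), the six remaining commutator terms are all of the unified form
\[
\pm\sum_{ij}c_{ij}\int\hat g(s)\,is\int_0^1\vp_0\bigl(a\,\sigma_\alpha(\delta_i h)\,\sigma_\beta(\delta_j h)\bigr)\,du\,ds
\]
for six explicit pairs $(\alpha,\beta)$ with $\alpha,\beta\in\{\pm s,\pm su,\pm s(1-u)\}$. Passing to the spectral picture in which $(\nabla_1,\nabla_2)$ act by scalars $(\xi_1,\xi_2)$, each such integral reduces via $\int_0^1 e^{-isu\xi}\,du = (1-e^{-is\xi})/(is\xi)$ to a divided difference of $G$; the six contributions combine, using evenness of $G$, to
\[
2\left(\frac{G(\xi_2)-G(\xi_1)}{\xi_1+\xi_2} + \frac{G(\xi_1+\xi_2)-G(\xi_2)}{\xi_1} - \frac{G(\xi_1+\xi_2)-G(\xi_1)}{\xi_2}\right) = \omega_G(\xi_1,\xi_2),
\]
exactly matching the definition \eqref{firsttransfo1}. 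Re-packaging as $\omega_G(\nabla_1,\nabla_2)$ acting on $\delta_i(h)\delta_j(h)$ and summing with weights $c_{ij}$ yields the second term $\vp_0\bigl(a\,\omega_G(\nabla_1,\nabla_2)(\square_\Re(h))\bigr)$.

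The main obstacle is the book-keeping: the six commutator integrals each involve a distinct product of $h$-exponentials for various affine combinations of $s$ and $u$, which must be simplified into the clean $\sigma_\alpha\sigma_\beta$ form and then matched, via the spectral picture, to the correct one of the three divided-difference building blocks of $\omega_G$. It is the combination of the symmetry $c_{ij}=c_{ji}$ and the evenness of $G$ that causes the six terms to pair up into exactly twice the symmetric expression in \eqref{firsttransfo1}.
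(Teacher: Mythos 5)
Your proposal is correct and follows essentially the same route as the paper: there the identical computation is organized into two lemmas, with the variation of the modular operator $\log \modu_{h+\ve a}=\log \modu_h-\ve\,{\rm ad}_a$ handled by Duhamel to produce the divided difference $2\,\frac{G(s_2)-G(s_1)}{s_1+s_2}$, and the two $\delta(a)$-terms handled by integration by parts together with the commutation relation between $\delta$ and $\sigma_t$ to produce the central term $-2\vp_0\big(a\,G(\lmod)(\lapa(h))\big)$ plus the remaining divided differences. Your ``six commutator terms'' and their assembly into $\omega_G$ are exactly the content of the paper's Lemmas \ref{doubletransfo} and \ref{doubletransfobis}.
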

\proof It is enough to prove the statement with $\square_\Re(h)$ replaced by $\delta(h)\delta(h)$ and $\lapa(h)$ by $\delta^2(h)$ where $\delta$ is a derivation equal to $\delta_j$ or $\delta_1+\delta_2$. One has
\begin{align*}
   \begin{split}
&  \frac{d}{d\ve} \big|_{\ve=0} \vp_0\left(
G(\nabla_1 )(\delta(h)\delta(h)\right)= \,
\vp_0 \big(\frac{d}{d\ve} \big|_{\ve=0} G (\log \modu_{h + \ve a})
(\d(h)) \, \d(h)\big) \\
&+ \vp_0 \big(G (\log \modu_h)(\d(a)) \, \d(h)\big)
+ \vp_0 \big(G (\log \modu_h)(\d(h)) \, \d(a)\big) .
   \end{split}
 \end{align*}
The proof of Theorem \ref{directgrad} then follows from the equality between the last two terms and the following two general lemmas. \endproof
\begin{lem}\label{doubletransfo}
Let $G(u)$ be an even Schwartz function, then for any $x,a \in A_\theta^\infty$ one has
\begin{equation}\label{doubletransfo0}
 \frac{d}{d\ve} \big|_{\ve=0} \vp_0( G (\log \modu_{h + \ve a})(x) \, x )\,= \,
\vp_0(a H(\nabla_1,\nabla_2)(xx))
 \end{equation}
 where the function $H(s,t)$ is given by
 \begin{equation}\label{doubletransfo1}
    H(s_1,s_2)=2\frac{G(s_2)-G(s_1)}{s_1+s_2}
 \end{equation}
\end{lem}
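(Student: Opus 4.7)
The plan is to reduce everything to exponentials via Fourier inversion. Write $G(u) = \int_{\R} \hat g(t)\,e^{itu}\,dt$, where $\hat g(-t) = \hat g(t)$ since $G$ is even. The key observation is that $h \mapsto h + \ve a$ perturbs $\lmod$ only by an inner derivation:
\[
\lmod_{h+\ve a}(y) = [y,\,h+\ve a] = \lmod(y) + \ve\,[y,a],
\]
so that $\ve \mapsto \lmod_{h+\ve a}$ is an affine family on $A_\theta^\infty$. Duhamel's formula therefore yields
\[
\frac{d}{d\ve}\Big|_{\ve=0} e^{it\lmod_{h+\ve a}}(x)
\,=\, it\int_0^1 e^{itu\lmod}\bigl(\,[\,e^{it(1-u)\lmod}(x),\,a\,]\,\bigr)\,du.
\]

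Next, multiply on the right by $x$, apply $\vp_0$, and convert each $e^{s\lmod}$ to a conjugation $e^{-sh}(\cdot)e^{sh}$ via the relation $e^{s\lmod}(y) = \modu^{s}(y) = e^{-sh}y\,e^{sh}$. Expand the commutator into two six-fold products in $A_\theta$, then cyclically rotate $a$ to the leftmost slot under $\vp_0$ and recombine the exponentials using $e^{sh}e^{th} = e^{(s+t)h}$. The integrand of the $u$-integral collapses to
\[
it\,\vp_0\!\left(a\cdot\bigl[\,e^{-itu\lmod}(x),\;e^{it(1-u)\lmod}(x)\,\bigr]\right),
\]
which in the bivariate functional calculus reads $\vp_0\bigl(a\cdot k_{t,u}(\lmod_1,\lmod_2)(xx)\bigr)$ for the elementary kernel
\[
k_{t,u}(s_1,s_2) = it\bigl(e^{-itus_1}\,e^{it(1-u)s_2} \,-\, e^{it(1-u)s_1}\,e^{-itus_2}\bigr).
\]

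To finish, integrate $k_{t,u}$ first over $u\in[0,1]$ and then against $\hat g(t)\,dt$. The $u$-dependence in both exponential terms reduces to the common factor $e^{-itu(s_1+s_2)}$ (since $-u$ and $1-u$ combine additively in each factor); the two elementary $u$-integrals therefore produce a common denominator $s_1+s_2$, and subtraction gives
\[
\int_0^1 k_{t,u}(s_1,s_2)\,du \,=\, \frac{(e^{its_2}+e^{-its_2}) - (e^{its_1}+e^{-its_1})}{s_1+s_2}.
\]
Integrating against $\hat g(t)\,dt$ and using evenness of $G$ (so $\int \hat g(t)\,e^{\pm its}\,dt = G(s)$) collapses each parenthesis to $2G(s_j)$, producing exactly $H(s_1,s_2) = 2(G(s_2) - G(s_1))/(s_1+s_2)$.

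The main obstacle is purely bookkeeping: one must carefully track the signs in the conjugation identity $e^{\pm sh}(\cdot)e^{\mp sh} = \modu^{\pm s}(\cdot)$ and the cyclic ordering of the six factors under $\vp_0$. Once this is done, the proof is a direct Duhamel plus Fourier inversion computation; notably, the evenness of $G$ is precisely what delivers the clean factor of $2$ in the final kernel by symmetrically pairing $e^{its_j}$ with $e^{-its_j}$, and it is the specific combination $e^{-itu\lmod}(x)\cdot e^{it(1-u)\lmod}(x)$ that forces the sum $s_1+s_2$ (rather than a difference) in the denominator of $H$.
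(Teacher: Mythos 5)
Your argument is correct and follows essentially the same route as the paper's proof: write $G$ as a Fourier integral, use that $\log\modu_{h+\ve a}=\log\modu-\ve\,{\rm ad}_a$ is affine in $\ve$, apply Duhamel to the exponential, move $a$ to the front under $\vp_0$ by cyclicity and $\sigma$-invariance, and evaluate the resulting double integral (you integrate in $u$ first where the paper integrates in $t$ first, but this is immaterial). The sign and ordering bookkeeping checks out, and the evenness of $G$ enters exactly where you say it does.
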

\proof
 Using the Fourier transform
 $$
 G(v)=\int g(t)e^{-itv}dt
 $$
  and the equality
  $$
  \log \modu_{h + \ve a}=\log \modu -\epsilon {\rm ad}_a\,, \ \ {\rm ad}_a(z)=[a,z]\qqq z\in A_\theta^\infty,
  $$
   we write
 \begin{align*}
 G (\log \modu_{h + \ve a}) (x) \, = \, \int g(t) e^{-it\lmod+\epsilon it{\rm ad}_a} (x) dt .
  \end{align*}
Since
  \begin{align*}
 \frac{d}{d\ve} \big|_{\ve=0} e^{-it\lmod+\epsilon it{\rm ad}_a}   \,= \,
  \int_0^1 e^{-iu t \lmod} \, i t {\rm ad}_a \, e^{-i(1-u)t \lmod} \, du ,
 \end{align*}
 one obtains
 \begin{equation*} \frac{d}{d\ve} \big|_{\ve=0} \vp_0( G (\log \modu_{h + \ve a})(x) \, x )\,=
\int g(t) \int_0^1 \vp_0 \left( \sigma_{u t}( \, i t {\rm ad}_a \, \sigma_{(1-u)t}(x))x \right)\, du dt
 \end{equation*}
One has
$$
\vp_0 \left( \sigma_{u t}( \, i t {\rm ad}_a \, \sigma_{(1-u)t}(x))x \right)=
it\vp_0\left({\rm ad}_a ( \sigma_{(1-u)t}(x))\sigma_{-u t}(x)\right)$$ $$
=it\vp_0\left(a (\sigma_{(1-u)t}(x)\sigma_{-u t}(x)-\sigma_{-u t}(x)\sigma_{(1-u)t}(x)\right),
$$
which is of the form
$$
\vp_0(a \ell(\nabla_1,\nabla_2)(xx))\,, \ \ \ell(s_1,s_2)=e^{-i s_1(1-u)t-is_2(-u)t}
-e^{-i s_1(-u)t-is_2(1-u)t}
$$
and gives \eqref{doubletransfo0} for
$$
H(s_1,s_2)=\int g(t) \int_0^1 it \left(e^{-i s_1(1-u)t-is_2(-u)t}
-e^{-i s_1(-u)t-is_2(1-u)t}\right) dt du
$$
which gives
\begin{equation}\label{predoubletransfo1}
    H(s_1,s_2)=\int_0^1\big(G'(-us_1+(1-u)s_2)-G'((1-u)s_1-u s_2)\big)du.
 \end{equation}
 One then performs the integral to obtain \eqref{doubletransfo1}.
 \endproof
As a simple example we take $G(u)=u^2$. In that case one has
$$
\frac{d}{d\ve} \big|_{\ve=0} \vp_0( G (\log \modu_{h + \ve a})(x) \, x )\,=-\frac{d}{d\ve} \big|_{\ve=0} \vp_0([h+\epsilon a,x]^2)
$$
and the right hand side gives
$$
-\frac{d}{d\ve} \big|_{\ve=0} \vp_0([h+\epsilon a,x]^2)=-2\vp_0([a,x][h,x])=
\vp_0(a H(\nabla_1,\nabla_2)(xx))
$$
with $H(s_1,s_2)=2s_2-2s_1$.

Let us now consider the term
$$
\vp_0 \big(G (\log \modu_h)(\delta(a)) \, \delta(h)\big) \, .
$$
We need to integrate by parts, which is achieved as follows.

\begin{lem}\label{doubletransfobis}
Let $G(u)$ be a Schwartz function, then for any $a \in A_\theta^\infty$ one has
\begin{equation}\label{doubletransfo0bis}
  \vp_0 \big(G (\log \modu_h)(\delta(h)) \, \delta(a)\big)\,= \,
\vp_0(a L(\nabla_1,\nabla_2)(\delta(h)\delta(h)))-\vp_0(a
G (\log \modu_h)\delta^2(h)),
 \end{equation}
 where the function $L(s,t)$ is given by
 \begin{equation}\label{doubletransfo1bis}
    L(s_1,s_2)=\frac{G(s_1+s_2)-G(s_2)}{s_1}-\frac{G(s_1+s_2)-G(s_1)}{s_2}
 \end{equation}
\end{lem}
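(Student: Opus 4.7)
The plan is to reduce everything to the Fourier-transform machinery already used in Lemma \ref{doubletransfo}, after a single integration by parts. Specifically, since $\delta$ is a derivation satisfying the integration-by-parts rule \eqref{int-by-parts}, I first rewrite
\[
\vp_0\bigl(G(\lmod)(\delta(h))\,\delta(a)\bigr) \,=\, -\vp_0\bigl(\delta(G(\lmod)(\delta(h)))\,a\bigr)\,=\,-\vp_0\bigl(a\,\delta(G(\lmod)(\delta(h)))\bigr)
\]
using the trace property of $\vp_0$. So the task becomes computing $\delta(G(\lmod)(\delta(h)))$ and identifying the two pieces of \eqref{doubletransfo0bis}.

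Writing $G$ as a Fourier transform $G(v)=\int g(t)e^{-itv}dt$, one has $G(\lmod)(\delta(h))=\int g(t)\,\sigma_t(\delta(h))\,dt$, where $\sigma_t(x)=e^{ith}xe^{-ith}$. The main computation is then to expand $\delta(\sigma_t(\delta(h)))$ using the Leibniz rule together with Duhamel's formula $\delta(e^{\pm ith})=\pm it\int_0^1 e^{\pm iuth}\delta(h)e^{\pm i(1-u)th}du$. After regrouping factors $e^{\pm iuth}e^{\mp iuth}=1$ to isolate modular automorphisms $\sigma_{ut}$, $\sigma_{(1-u)t}$ and $\sigma_t$, I expect to obtain the identity
\[
\delta(\sigma_t(\delta(h))) \,=\, \sigma_t(\delta^2(h)) \,+\, it\!\int_0^1\!\bigl[\sigma_{ut}(\delta h)\,\sigma_t(\delta h)\,-\,\sigma_t(\delta h)\,\sigma_{(1-u)t}(\delta h)\bigr]du.
\]
Integrating against $g(t)$ immediately yields $\vp_0(a\,G(\lmod)(\delta^2(h)))$ from the first summand, which accounts for the second term in \eqref{doubletransfo0bis} (with the correct sign, because of the minus sign introduced by the integration by parts).

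For the remaining double-commutator integral, I apply the Fourier-symbol calculus of Lemma \ref{doubletransfo}: a product $\sigma_\alpha(\delta h)\,\sigma_\beta(\delta h)$ corresponds to the symbol $e^{-i(\alpha s_1+\beta s_2)}$ acting on $\delta(h)\delta(h)$ via $(\nabla_1,\nabla_2)$. Carrying out the elementary $u$-integral $\int_0^1 e^{-iuts_1}du=(1-e^{-its_1})/(its_1)$ in each of the two pieces, and then recognising $\int g(t)e^{-it s}\,dt=G(s)$, yields the symbols
\[
\frac{G(s_2)-G(s_1+s_2)}{s_1}\quad\text{and}\quad \frac{G(s_1)-G(s_1+s_2)}{s_2}
\]
respectively. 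Their difference, once the overall minus sign from the integration by parts is accounted for, is exactly
\[
L(s_1,s_2)=\frac{G(s_1+s_2)-G(s_2)}{s_1}-\frac{G(s_1+s_2)-G(s_1)}{s_2},
\]
which completes the proof. The main bookkeeping obstacle is keeping track of signs and of which instance of $\delta(h)$ each copy of $\sigma$ decorates, i.e., which variable $\nabla_1$ versus $\nabla_2$ corresponds to; as in Lemma \ref{doubletransfo}, a density argument justifies removing the Schwartz assumption on $G$, since the spectrum of $\lmod$ is bounded and only smoothness of $G$ is actually used.
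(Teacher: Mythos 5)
Your proposal is correct and follows essentially the same route as the paper: integrate by parts to reduce to $\delta(G(\lmod)(\delta(h)))$, expand $\delta(\sigma_t(\delta(h)))-\sigma_t(\delta^2(h))$ via Duhamel as a $u$-integral of products $\sigma_\alpha(\delta h)\sigma_\beta(\delta h)$, and convert to symbols in $(\nabla_1,\nabla_2)$ to get exactly $L(s_1,s_2)$. The only cosmetic difference is that your integrand carries $\sigma_{(1-u)t}$ in the second term where the paper uses $\sigma_{ut}$, which agrees after the substitution $u\mapsto 1-u$ under the $u$-integral.
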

\proof One has for any $x\in A_\theta^\infty$ the equality
$$
\delta(\sigma_t(x))-\sigma_t(\delta(x))=it\int_0^1\sigma_{ut}({\rm ad}_{\delta(h)}(\sigma_{(1-u)t}(x)))du
$$
so that
$$
\delta(\sigma_t(x))-\sigma_t(\delta(x))=it\int_0^1\left(\sigma_{ut}(\delta(h))\sigma_t(x)-
\sigma_t(x)\sigma_{ut}(\delta(h))\right)du
$$
and taking $x=\delta(h)$ we get,
$$
\delta(\sigma_t(\delta(h)))-\sigma_t(\delta^2(h))=
L_t(\nabla_1,\nabla_2)(\delta(h)\delta(h))
$$
where
$$
L_t(s_1,s_2)=\frac{1-e^{-its_1}}{s_1}e^{-its_2}-\frac{1-e^{-its_2}}{s_2}e^{-its_1}
$$
Now writing $G(v)=\int e^{-itv}g(t)dt$ one gets
\begin{equation}\label{deltasigma}
\delta(G (\log \modu_h)(\delta(h)))=G (\log \modu_h)(\delta^2(h))-L(\nabla_1,\nabla_2)(\delta(h)\delta(h))
\end{equation}
where
$$
L(s_1,s_2)=-\int \left(\frac{1-e^{-its_1}}{s_1}e^{-its_2}-\frac{1-e^{-its_2}}{s_2}e^{-its_1}\right)g(t)dt
$$
which is the same as \eqref{doubletransfo1bis}.
One has, using integration by parts
$$
\vp_0 \big(G (\log \modu_h)(\delta(h)) \, \delta(a)\big)\,=
-\vp_0 \big(\delta(G (\log \modu_h)(\delta(h))) \,a\big)
$$
and using \eqref{deltasigma} one obtains \eqref{doubletransfo0bis}.
\endproof
\bigskip

%%%%%%%%%%%%%%%%%%%%%%%%%%

\section{Further remarks}

\subsection{Explicit examples} \hfill \medskip\label{sectexample}

As a concrete illustration, we shall compute the Ray-Singer determinant and
the scalar curvature for a class of conformal factors which 
exhibit interesting geometric features and have no classical
counterparts.
These are the dilatons associated to self-adjoint idempotents,
such as the Powers-Rieffel projections,
which exist in abundance in $A_\theta^\infty$. 
 In order to describe them it will be convenient to
 identify the $C^*$-subalgebra generated by $V$ with $C(S^1)$, or equivalently 
 to represent the algebra $A_\theta$ as the
crossed product of $C(S^1)$ by the irrational rotation, so that 
$$
UfU^*=f_\theta \qqq f \in C(S^1), \ \ f_\theta(x)=f(x-\theta)\qqq x\in \R/(2\pi \Z), \ \ V(x)=e^{ix} .
$$
A Powers-Rieffel projection~\cite{Ri} has the form
\begin{equation}\label{idemp}
    p=f_{-1} U^*+ f_0+ f_1 U\,, \ \ f_j\in C^\infty(S^1) ,
\end{equation}
where, assuming the functions $f_j$, $j=-1, 0, 1$, real valued, one has
\begin{align} \nonumber
   & f_{-1}(x)=f_1(x+\theta)\,, \ f_1(x)f_1(x-\theta)=0\,, \,
    f_1(x)(f_0(x)+f_0(x-\theta))=f_1(x), \\ \label{edemp1}
    & f_0(x)^2+f_1(x)^2+f_1(x+\theta)^2=f_0(x)\qqq x \in \R/(2\pi \Z) \, ;
\end{align}
moreover, one can choose $f_0$ taking values in $[0,1]$ and such that 
\begin{align} \label{edemp2}
\vp_0 (p) \, = \,\frac{1}{2\pi}\int_0^{2\pi} f_0(x)\,  dx \, = \, \theta .
\end{align}

We now fix a projection $p=p^*=p^2$ as above, and consider $1$-parameter
familiy of dilatons of the form
\begin{align} \label{pdilaton1}
h \, \equiv \, h (s) \, := \, s \, p \, + \, \rho (s) \, , \quad
\text{with} \quad \rho (s) \,:= \, \log \left(1 + (e^{-s} - 1)\, \theta \right) , \quad s \in \R \, .
\end{align}
The function $\rho$ is chosen so that the corresponding conformal weights
$$
\vp_s (x) \, :=\, \vp_0 \left(x e^{-s\, p - \rho(s)}\right) \, , \quad  x \in A_\theta^\infty \, ,
$$
are actually states; indeed,
 \begin{align} \label{pdilaton2}
\vp_s (1) \, = \, \vp_0 \left(e^{-s\, p - \rho(s)}\right) \, = \, 
e^{-\rho(s)}  \left(1 + (e^{-s} - 1) \vp_0 (p) \right) \, = \,  1 .
\end{align}

\begin{prop} \label{propexample}
With the above notation, the
Ray-Singer determinant of the Laplacian $\,\triangle_{\vp_s}$, $\, s \in \R$,
is given by the following closed formula:
 \begin{equation}\label{raysingerexpl}
\log \Det^\prime (\triangle_{\vp_s}) \, = \, \newval
-\frac{\pi}{8 \tau_2} \left(\a(p)+|\tau|^2 \b(p) \right) \, s^2 \tilde K_0(s) \, ,
\end{equation}
where
\begin{equation}\label{raysing}
   \a(p)=\frac{1}{\pi}\int_0^{2\pi}f_1(x)^2 dx\,, \quad
  \b(p)= \frac{1}{2\pi}\int_0^{2\pi}\left(f'_0(x)^2+2f'_1(x)^2\right) dx\,.
\end{equation}
\end{prop}

\proof Since $p^2=p$ one gets
\begin{equation}\label{decomp}
\delta_j(p)=p \delta_j(p)+\delta_j(p) p \, , \qquad j=1,2
\end{equation}
and with $\lmod$ the derivation implemented by $-h$ one has
\begin{equation}\label{eigen}
    \lmod(p\delta_j(h))=-s p\delta_j(h)\,, \ \  \lmod(\delta_j(h)p)=s \delta_j(h)p
\end{equation}
since $$[-h,p\delta_j(h)]=-s\left(p^2\delta_j(h)-p\delta_j(h)p\right)
=-s p\delta_j(h).$$
Thus the decomposition \eqref{decomp} gives $\delta_j(p)$ as a sum of eigenvectors for the eigenvalues $\pm s$ for the operator $\lmod$. Since the function $\tilde K_0$ is even we thus get
$$
\tilde K_0(\nabla_1 )(\delta_j(h)\delta_j(h))=
\tilde K_0(s)\left( p \delta_j(h)+\delta_j(h) p    \right)\delta_j(h)=
  s^2\tilde K_0(s)\delta_j(p)^2
$$
and
\begin{equation}\label{eigen1}
    \tilde K_0(\nabla_1 )(\square_\Re(h))= s^2\tilde K_0(s)\square_\Re(p)
\end{equation}
Using our formula for the variation of the log-determinant, \cf \eqref{inside3}, one obtains
\begin{equation}\label{raysingerbis}
\log \Det^\prime (\perta) \, = \,\newval
+\frac{\pi}{8\tau_2} s^2 \tilde K_0(s)\, \vp_0(\square_\Re(h)) .
\end{equation}
One has
\begin{align*}
\begin{split}
\vp_0(\delta_1(p)^2)&=-\frac{1}{2\pi}\int_0^{2\pi}\left(f_1(x)^2+f_1(x+\theta)^2\right) dx
=-\frac{1}{\pi}\int_0^{2\pi}f_1(x)^2 dx , \\
\vp_0(\delta_2(p)^2)&=-\frac{1}{2\pi}\int_0^{2\pi}\left(f'_0(x)^2+f'_1(x)^2+f'_1(x+\theta)^2\right) \\
&=-\frac{1}{2\pi}\int_0^{2\pi}\left(f'_0(x)^2+2f'_1(x)^2\right) dx , \\
\vp_0(\delta_1(p)\delta_2(p))&=-\frac{i}{2\pi}
\int_0^{2\pi}\left(f_1(x)f'_1(x) -f_1(x+\theta)f'_1(x+\theta) \right) dx=0 , \\
\vp_0(\delta_2(p)\delta_1(p))&=\frac{i}{2\pi}
\int_0^{2\pi}\left(f_1(x)f'_1(x) -f_1(x+\theta)f'_1(x+\theta) \right) dx=0 .
\end{split}
\end{align*}
Thus, the formula \eqref{raysingerbis} takes the form \eqref{raysingerexpl}.
\endproof
\medskip

Let us now compute the scalar curvature given by \eqref{scalar} for an arbitrary projection
$p \in A_\theta^\infty$. Since the normalization of the area plays no role in this local 
calculation, we now take $h = s\, p$. One has
$$
\lapa(h)=s \lapa(p)= s\left( p\lapa(p) p+ p\lapa(p)(1-p)+(1-p)\lapa(p)p
+(1-p)\lapa(p)(1-p)    \right)
$$
which gives a decomposition in eigenvectors for $\lmod$ with eigenvalues $0,-s,s,0$. It follows that
\begin{align} \label{K-term}
\begin{split}
\tilde K_0(\lmod)(\lapa(h))&= \, s \tilde K_0(s)(p\lapa(p)(1-p)+(1-p)\lapa(p)p)+
s\tilde K_0(0)(p\lapa(p) p \\
&\, +(1-p)\lapa(p)(1-p) )
\end{split}
\end{align}
One has moreover using the decomposition \eqref{decomp} and the vanishing of $\tilde H_0(s,s)$,
\begin{align} \label{H-term}
\frac 12\tilde H_0(\nabla_1,\nabla_2)(\square_\Re (h))=\frac 12 s^2\tilde H_0(s,-s)(1-2p)
\square_\Re (p)\,.
\end{align}
\smallskip

\begin{prop} \label{propexamplebis}
Let $p=p^*=p^2$ be any projection, $h=s\,p$, where $s\in\R$,
and $\vp(x)=\vp_0(x e^{-s\, p})$ the associated conformal weight.
The scalar curvature is given by the formula
\begin{equation}\label{finescal}
K_\vp= \frac{\pi s}{4\tau_2} \left(\tilde K_0(s)\lapa(p)+
\frac s2\partial_s \tilde K_0(s)\left(p\lapa(p) p+(1-p)\lapa(p)(1-p) \right)\right)e^h.
\end{equation}
\end{prop}
\proof The above discussion yields the formula
\begin{align}
\begin{split}\label{scalarexplpre}
K_\vp &= \frac{\pi s}{4\tau_2} \left(\tilde K_0(s)\left(p\lapa(p)(1-p)+
(1-p)\lapa(p)p\right)\right.\\&+
\left.\frac 23\left(p\lapa(p) p+(1-p)\lapa(p)(1-p) \right)+
\frac s2 \tilde H_0(s,-s)(1-2p)
\square_\Re (p)\right)e^h
\end{split}
\end{align}
But since $p^2=p$ one has the relation
\begin{equation}\label{relidem}
    2\square_\Re (p)=(1-p)\lapa(p)-\lapa(p)p
\end{equation}
which gives
\begin{equation}\label{relidembis}
    2(1-2p)\square_\Re (p)=(1-p)\lapa(p)-(1-2p)\lapa(p)p=p\lapa(p) p
    +(1-p)\lapa(p)(1-p) .
\end{equation}

Now,  one has the relation
\begin{equation}\label{special}
\frac s2 \tilde H_0(s,-s)=s\partial_s \tilde K_0(s)+2(\tilde K_0(s)-\tilde K_0(0)) ,
\end{equation}
which is a special case of \eqref{transforule}. We then use
$$
\left(p\lapa(p)(1-p)+(1-p)\lapa(p)p\right)+\left(p\lapa(p) p
+(1-p)\lapa(p)(1-p) \right)=\lapa(p)
$$
and simplify \eqref{scalarexplpre} to
$$
K_\vp= \frac{\pi s}{4\tau_2} \left(\tilde K_0(s)\lapa(p)+
\frac s2\partial_s \tilde K_0(s)\left(p\lapa(p) p+(1-p)\lapa(p)(1-p) \right)\right)e^h.
$$
which is the required equality.\endproof
\medskip

Note that each of the two separate terms  
$\, p\lapa(p) p+(1-e)\lapa(p)(1-p) $ and $\lapa(p)$
have vanishing integral under $\vp_0$, confirming the validity
of the Gauss--Bonnet formula.
 \medskip

A more striking fact is the `bending' along the ray of conformal factors $\, h_s = s\, p$
of the normalized scalar curvature
\begin{align} \label{nscalar}
\cK_{s}(p)\, :=\,  K_{\vp_s} \, e^{-h_s} \, .
\end{align} 
Classically, the normalized curvature is given by the Laplacian of the conformal factor,
and therefore it is homogeneous of degree $1$ in the scaling parameter.
In our case though, because $p \in A^\infty_\theta$ is an idempotent, $\cK_{s}(p)$
turns out to be bounded as a function of $\, s \in \R$. Indeed,
\begin{align*}
\cK_{s}(p)\, = \, \frac{\pi}{4\tau_2} \left(s \tilde K_0(s)\lapa(p)+
\frac{s^2}{2} \partial_s \tilde K_0(s)\left(p\lapa(p) p+(1-p)\lapa(p)(1-p) \right)\right) ,
\end{align*} 
with the odd functions 
\begin{align*} 
\frac{1}{4} s \tilde K_0(s) = 2 \left(\frac{1}{e^s - 1} - \frac 1 s\right) + 1, \quad
 \frac{1}{8}  s^2  \partial_s \tilde K_0(s)  = 
-\frac{s e^s}{(e^s - 1)^2}   -  \frac{1}{e^s -1} + \frac 2 s - \frac 1 2
\end{align*} 
evidently bounded. Moreover, one has
\begin{align*}
\lim_{s \rightarrow \pm \infty} \cK_{s}(p)\, =\, \pm \frac{\pi}{\tau_2} 
\left(\lapa(p) \, - \, \frac 12 \left(p\lapa(p) p+(1-p)\lapa(p)(1-p) \right)\right) .
 \end{align*}

\bigskip

\subsection{Intrinsic definition and normalization} \hfill \medskip\label{sectnorma}

In this section we explain how to reformulate the above scalar curvature in intrinsic terms involving only the even two-dimensional modular spectral triple $
(A_{\theta}^{\infty}, \cH, \dvp)$. Let us first relate the weight $\vp$ to the natural volume form associated to the modular spectral triple.
\begin{lem}\label{volume}
For any $a\in A_{\theta}$ one has
\begin{equation}\label{volume1}
    \cutint a \dvp^{-2}=\frac{2\pi}{\tau_2}\vp(a)\,, \ \ \cutint\gamma a \dvp^{-2}=0.
\end{equation}
\end{lem}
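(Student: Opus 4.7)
The plan is to reduce everything to the two partial Laplacians and then evaluate the leading heat–kernel coefficient of each of them using Connes's pseudodifferential calculus on $\T^2_\theta$~\cite{C},~\cite[\S IV.6]{Co-book}. By Lemma~\ref{laplem}, $\dvp^{2}$ is diagonal:
\begin{equation*}
\dvp^{2}=\begin{pmatrix} \perta & 0 \\ 0 & \pertc \end{pmatrix},
\end{equation*}
so with $a$ acting as $\mathrm{diag}(a,a)$ and $\gamma=\mathrm{diag}(1,-1)$, the noncommutative integrals split as
\begin{equation*}
\cutint a\,\dvp^{-2}=\cutint_{\cH_{0}}a\,\perta^{-1}+\cutint_{\cH_{0}}a\,\pertc^{-1},\qquad \cutint\gamma a\,\dvp^{-2}=\cutint_{\cH_{0}}a\,\perta^{-1}-\cutint_{\cH_{0}}a\,\pertc^{-1}.
\end{equation*}
It therefore suffices to show that each summand equals $\pi\,\vp(a)/\tau_{2}$.

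The key observation is that $\perta$ and $\pertc$ share the same principal symbol. Indeed, the numerical quadratic form $Q(\xi):=\xi_{1}^{2}+2\Re(\tau)\xi_{1}\xi_{2}+|\tau|^{2}\xi_{2}^{2}$ commutes with the element $k^{2}\in A_{\theta}^{\infty}$, so
\begin{equation*}
\sigma_{2}(\perta)(\xi)=k\,Q(\xi)\,k=k^{2}Q(\xi),
\end{equation*}
while expanding $\pertc=(\delta_{1}+\tau\delta_{2})\,k^{2}\,(\delta_{1}+\bar\tau\delta_{2})$ and pushing $k^{2}$ to the left yields $k^{2}\lapa$ plus a first-order differential operator, hence also $\sigma_{2}(\pertc)(\xi)=k^{2}Q(\xi)$. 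The symbolic formula for the leading heat coefficient on $\T^{2}_{\theta}$ then gives, for a positive elliptic operator $P$ of order $2$ with principal symbol $\sigma_{2}(P)(\xi)$,
\begin{equation*}
\Tr\bigl(a\,e^{-tP}\bigr)\;\sim\;\frac{1}{t}\,\vp_{0}\!\left(a\,\int_{\R^{2}}e^{-\sigma_{2}(P)(\xi)}\,d\xi\right)\qquad (t\searrow 0).
\end{equation*}
Applied to $\perta$ (and analogously to $\pertc$), the inner Gaussian integral can be computed explicitly, using that $k^{-2}$ commutes with $Q(\xi)$ inside the integral and that $\det Q=\tau_{2}^{2}$:
\begin{equation*}
\int_{\R^{2}}e^{-tk^{2}Q(\xi)}\,d\xi=\frac{\pi}{t\,\tau_{2}}\,k^{-2}.
\end{equation*}
Therefore
\begin{equation*}
\Tr\bigl(a\,e^{-t\perta}\bigr)\;\sim\;\frac{\pi}{t\,\tau_{2}}\,\vp_{0}(a\,k^{-2})=\frac{\pi}{t\,\tau_{2}}\,\vp(a),
\end{equation*}
since $k^{-2}=e^{-h}$; the same asymptotics holds for $\pertc$.

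Reading the leading coefficient as $\cutint_{\cH_{0}}a\,(\cdot)^{-1}$ (equivalently, the residue at the first pole of the associated zeta function, which is available by the discussion preceding Theorem~\ref{thmconfindlap}), the sum of the two identical contributions gives the first claim $\cutint a\,\dvp^{-2}=2\pi\,\vp(a)/\tau_{2}$, while their difference produces the second claim $\cutint\gamma a\,\dvp^{-2}=0$. The only real technical point is to justify the symbolic formula for the leading coefficient when the symbol takes values in the noncommutative algebra $A_\theta^{\infty}$ rather than $\C$; this is a standard by-product of the pseudodifferential calculus for $C^{\ast}$-dynamical systems applied to $\T^{2}_{\theta}$, already invoked in \S\ref{s:conf-ind-tor}, and is the one step where care is needed with the Riemann-sum/Gaussian-integral comparison $\sum_{(n,m)\in\Z^{2}}e^{-tk^{2}Q(n,m)}\sim \int_{\R^{2}}e^{-tk^{2}Q(\xi)}\,d\xi$.
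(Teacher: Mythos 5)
Your argument is correct, but it follows a different route from the paper's. The paper works only with the block $E\dvp^2E=k\lapa k$ (where $E=\frac{1+\gamma}{2}$), uses the trace property of the residue functional to write $\cutint a\,k^{-1}\lapa^{-1}k^{-1}=\cutint k^{-1}ak^{-1}\lapa^{-1}$, and then reduces everything to the single identity $\cutint x\,\lapa^{-1}=\lambda\,\vp_0(x)$ for the \emph{flat} Laplacian, which is verified on the basis $U^nV^m$ and whose constant $\lambda=\pi/\tau_2$ is read off from the residue at $s=1$ of the Eisenstein series $\sum_{(n,m)\neq(0,0)}|n+m\tau|^{-2s}$; the statement for the other block (hence the $\gamma$-identity) is only asserted to "follow from the above." You instead compute the coefficient of $t^{-1}$ in $\Tr(a\,e^{-tP})$ for \emph{each} of the two blocks directly from the principal symbol $k^2Q(\xi)$, via the Gaussian integral $\int_{\R^2}e^{-tk^2Q(\xi)}\,d\xi=\frac{\pi}{t\tau_2}k^{-2}$ and $\det Q=\tau_2^2$. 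Your normalizations check out against the paper's conventions ($\vp_0$ normalized, residue at $s=1$ equal to the $t^{-1}$ heat coefficient, and $\lambda=\pi/\tau_2$ in the flat case), and your symbol identifications for $\perta=k\lapa k$ and $\pertc=\delta^*k^2\delta$ agree with the symbol lemma of \S\ref{tech}. What the paper's route buys is economy: once one grants that $\cutint$ is a trace on the pseudodifferential algebra, everything reduces to a transparent computation for the flat Laplacian with no Gaussian integrals. What your route buys is that it treats the two blocks symmetrically and therefore actually proves the second identity $\cutint\gamma a\,\dvp^{-2}=0$ explicitly, rather than leaving it implicit; the price is that you must invoke the leading-symbol heat-coefficient formula for algebra-valued symbols, which — as you correctly flag — is the one step requiring the pseudodifferential calculus of \cite{C} and the Riemann-sum comparison $\sum_{(n,m)}e^{-tk^2Q(n,m)}\sim\int e^{-tk^2Q(\xi)}\,d\xi$. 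Both uses of the calculus are legitimate and already underwrite the rest of the paper.
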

\proof We shall show that
\begin{equation}\label{volume2}
\cutint E a \dvp^{-2}=\frac{\pi}{\tau_2}\vp(a), \ \ E=\frac{1+\gamma}{2}.
\end{equation}
Note that since the kernel of $\dvp$ is finite dimensional we do not care about the lack of invertibility of $\dvp$ and define arbitrarily $\dvp^{-2}$ on the kernel, this does not affect the value of the residue. To prove \eqref{volume2} note that $E\dvp^2E=k\lapa k$ and one gets
$$
\cutint E a \dvp^{-2}=\cutint  a k^{-1}\lapa^{-1}k^{-1}=
\cutint k^{-1} a k^{-1}\lapa^{-1}=\lambda\vp_0(k^{-1} a k^{-1})=\lambda\vp_0(a k^{-2})
$$
where the constant $\lambda$ comes from the equality
$$
\cutint x\,\lapa^{-1}=\lambda\vp_0(x)\qqq x\in A_\theta
$$
which can be checked directly since both sides vanish on $U^nV^m$ for $(n,m)\neq (0,0)$. To compute $\lambda$ one just needs the residue at $s=1$ of the zeta function
$$
\Tr(\lapa^{-s})=\sum_{(n,m)\neq (0,0)}|n+m\tau|^{-2s}
$$
which gives $\pi/\tau_2$ and proves \eqref{volume2}. We shall not need the other part
but it follows from the above.\endproof

Using the canonical volume form of the spectral triple instead of $\vp$ in the definition of the scalar curvature \eqref{scalar} thus removes the unpleasant factor  $\pi/\tau_2$ in the above formulas.

\medskip

There is one more small adjustment needed to obtain an intrinsic definition; we have identified above the space of deformed modular spectral triples with the space of self-adjoint elements $h\in A_\theta^\infty$ and the tangent space at a point $h$ accordingly by linearity.

\begin{rem}\label{dimtwo} {\rm In \cite{CMbook}, Definition 1.147, the notion of scalar curvature functional was introduced  for spectral triples $(A,\cH,D)$ of dimension $4$ by the equality
\begin{equation}\label{dim4}
  \cR(a)\, = \,  \cutint a D^{-2}\qqq a \in A\,.
\end{equation}
The same formula with $D^{-(n-2)}$ instead of $D^{-2}$ works in dimension $n$ when $n\neq 2$
with a suitable normalization.
For $n=2$ the normalization factor has a pole and the analogue of \eqref{dim4} becomes,
\begin{equation}\label{dim2}
  \cR(a)=  {\rm a}_2(a,D^2)=\zeta_{D^2}(a,0)+\Tr(Pa)\qqq a \in A\,,
\end{equation}
where $P$ is the orthogonal projection on the kernel of $D$. What the present development shows in particular is that in the even case the above
general definition should be refined by using the chiral expression:
$$
 \cR_\gamma(a)\, = \,\cutint E a D^{-2}\,, \quad \text{where} \quad \ E=\frac{1+\gamma}{2}.
$$
A number of the above results extend naturally
to the general case of dimension two, and relate the variation
 under inner twisting of the Ray-Singer torsion of modular spectral triples
to their chiral scalar curvature.
}\end{rem}

\bigskip

%%%%%%%%%%%%%%%%%%%%%%%%%%%%%%%%%%%%%%%%%%%

\section{Symbolic calculations}\label{tech}

The computation follows the same lines as \cite{Paula}. The case of the operator $k\lapa k$ is treated at the symbol level as in  \cite{Paula}.  We first give the analogue of Lemma 6.1 of \cite{Paula} for the operator $\pertc= \delta^* k^2\delta$.

\begin{lem}
-- The operator $\pertc$ has symbol $\sigma (\pertc) = a_2 (\xi) + a_1
(\xi) + a_0 (\xi)$ where,
\begin{equation*}
a_2 = a_2 (\xi) =k^2 \left(\xi _1^2+2 \Re(\tau) \xi _1 \xi _2+|\tau|^2 \xi _2^2\right)
\end{equation*}
\begin{equation*}
a_1 = a_1 (\xi) =\left(k\delta _1(k)+\delta _1(k)k+\tau (k\delta _2(k)+\delta _2(k)k)\right) \left(\xi _1+\bar\tau \xi _2\right)
\end{equation*}
\begin{equation*}
a_0 = a_0 (\xi) = 0 .
\end{equation*}
\end{lem}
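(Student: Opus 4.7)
The plan is to apply the standard symbol-composition rule of the pseudodifferential calculus on $\T^2_\theta$ (\cf \cite{C}, \cite[\S IV.6]{Co-book}) to the product $\pertc = \delta^* \circ (k^2 \delta)$. Recall that in this calculus, if $P$ and $Q$ are operators with symbols $\sigma(P)(\xi)$ and $\sigma(Q)(\xi)$, then
\begin{equation*}
\sigma(P Q)(\xi) \, = \, \sum_{\alpha \in \Z_+^2} \frac{1}{\alpha!} \, \partial_\xi^\alpha \sigma(P)(\xi) \cdot \delta^\alpha(\sigma(Q)(\xi)),
\end{equation*}
where $\delta^\alpha = \delta_1^{\alpha_1}\delta_2^{\alpha_2}$ acts only on the coefficient part of $\sigma(Q)$.

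First I would compute the symbol of the order-one factor. Since $k^2 \delta = k^2 \delta_1 + \bar\tau \, k^2 \delta_2$ is already in normal form with scalar symbols $k^2$ and $\bar\tau k^2$, its symbol is $\sigma(k^2 \delta)(\xi) = k^2(\xi_1 + \bar\tau \xi_2)$. Next, I would observe that $\sigma(\delta^*)(\xi) = \xi_1 + \tau \xi_2$ is linear in $\xi$, so $\partial_\xi^\alpha \sigma(\delta^*) = 0$ as soon as $|\alpha| \geq 2$. Consequently the composition sum terminates after the $|\alpha|=1$ contribution, and the key computational step reduces to two Leibniz applications.

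For the principal part I would simply multiply the two symbols, using that $\xi_j$ commute with algebra elements:
\begin{equation*}
a_2 \, = \, (\xi_1 + \tau \xi_2)\, k^2 (\xi_1 + \bar\tau \xi_2) \, = \, k^2\bigl(\xi_1^2 + 2 \Re(\tau)\, \xi_1\xi_2 + |\tau|^2 \xi_2^2\bigr),
\end{equation*}
where the second equality expands $(\xi_1 + \tau \xi_2)(\xi_1 + \bar\tau \xi_2)$ and matches the principal symbol of $\lapa$ from \eqref{lap0}. For the subprincipal term, the $|\alpha|=1$ contributions give
\begin{equation*}
a_1 \, = \, \partial_{\xi_1}\sigma(\delta^*)\cdot \delta_1\bigl(k^2(\xi_1 + \bar\tau \xi_2)\bigr) + \partial_{\xi_2}\sigma(\delta^*)\cdot \delta_2\bigl(k^2(\xi_1 + \bar\tau \xi_2)\bigr) \, = \, \bigl(\delta_1(k^2) + \tau\, \delta_2(k^2)\bigr)(\xi_1 + \bar\tau \xi_2).
\end{equation*}
Applying the Leibniz rule $\delta_j(k^2) = k \delta_j(k) + \delta_j(k) k$ (noting that $k$ and $\delta_j(k)$ do not commute, so both terms must be retained) yields precisely the stated formula for $a_1$. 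Finally $a_0 = 0$ because no further $|\alpha| \geq 2$ terms survive.

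The only delicate point is the Leibniz expansion of $\delta_j(k^2)$ in the non-commutative setting, which is what distinguishes $a_1$ from its commutative counterpart and accounts for the symmetrized form $k \delta_j(k) + \delta_j(k) k$; the rest is a routine application of the composition formula, exactly parallel to Lemma~6.1 of \cite{Paula}.
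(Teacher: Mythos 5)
Your proof is correct and amounts to essentially the same argument as the paper's: the paper simply applies the derivation property of $\delta^*$ at the operator level to write $\delta^* k^2\delta = k^2\delta^*\delta + \delta^*(k^2)\delta$ and reads off the homogeneous components, which is exactly the content of your $|\alpha|=0$ and $|\alpha|=1$ terms in the composition formula (the sum terminating since $\sigma(\delta^*)$ is linear in $\xi$). Your care with the noncommutative Leibniz expansion $\delta_j(k^2)=k\delta_j(k)+\delta_j(k)k$ is exactly the point that produces the symmetrized form of $a_1$.
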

\proof This follows from the derivation property of $\delta^*=\delta_1+\tau \delta_2$ which gives
$$
\delta^* k^2\delta=k^2\delta^* \delta+ \delta^*(k^2)\delta,
$$
which is the decomposition of the operator as a sum of homogeneous terms.
\endproof
In general we recall the product formula within
the algebra of symbols, where with $\sigma(P)=\rho$, $\sigma(Q)=\rho'$, one has
\begin{equation*}
\sigma (PQ) \sim \sum_{\ell_j\geq 0} (1/(\ell_1! \, \ell_2!) \, [\partial_1^{\ell_1} \,
\partial_2^{\ell_2} (\rho (\xi)) \, \delta_1^{\ell_1} \,
\delta_2^{\ell_2} (\rho' (\xi))] \, .
\end{equation*}
One then proceeds as in \cite{Paula} for the inductive calculation of the inverse of the symbol of
$\pertc-\lambda$, using $\lambda$ as a symbol of order two and
\begin{equation}
\label{eq2} b_0 = b_0 (\xi) = (k^2  (\xi_1^2+2\Re(\tau)\xi_1\xi_2+|\tau|^2\xi_2^2) -\lambda)^{-1}
\end{equation}
and computing to order $-3$ in $\xi$ the product $b_0 \cdot ((a_2 -\lambda)
+ a_1 + a_0)$. By singling out terms of the appropriate degree $-1$
in $\xi$, one obtains
\begin{equation}
\label{eq3} b_1 = - (b_0 \, a_1 \, b_0 + \partial_i (b_0) \,
\delta_i (a_2) \, b_0) \, .
\end{equation}
Note that $b_0$ appears on the right in this formula, and we refer to \cite{Paula} for detailed explanations. In a similar fashion, collecting terms
of degree $-2$ in $\xi$    one obtains
\begin{eqnarray}
\label{eq4}
b_2 &= &- (b_0 \, a_0 \, b_0 + b_1 \, a_1 \, b_0 + \partial_i (b_0) \, \delta_i (a_1) \, b_0 \nonumber \\
&+ &\partial_i (b_1) \, \delta_i (a_2) \, b_0 + (1/2) \, \partial_i
\, \partial_j (b_0) \, \delta_i \, \delta_j (a_2) \, b_0) \, .
\end{eqnarray}
The resulting formula for $b_2$ is quite long and the next step is to perform the integration  $\alpha(\lambda)=\int b_2(\xi,\lambda)d^2\xi$. Note, before starting, that by homogeneity of symbols one has
 $$
 b_2(v\xi,v^2\lambda)=v^{-4}b_2(\xi,\lambda)
 $$
 and we thus know that $\alpha(\lambda)$ is homogeneous of degree $-1$ in $\lambda$ since
$$
 \int b_2(\xi,u\lambda)d^2\xi=u^{-2} \int b_2(u^{-1/2}\xi,\lambda)d^2\xi=
 u^{-1} \int b_2(\xi',\lambda)d^2\xi'\,.
 $$
 Moreover the next step in order to obtain the constant term in the heat expansion is to do an integral in the variable $\lambda$ of the form
 $$
\frac{1}{2\pi i}  \int_C  e^{-t\lambda} \alpha(\lambda) d\lambda
$$
 where the contour $C$ around the positive real axis is chosen in such a way that
 $$
 \frac{1}{2\pi i}  \int_C e^{-t\lambda}\frac{1}{s-\lambda}d\lambda=e^{-ts}\,.
 $$
 Applying this equality for $s=0$ one gets that
 $$
 \frac{1}{2\pi i}  \int_C  e^{-t\lambda} \alpha(\lambda) d\lambda=\alpha(-1)\,.
 $$
 Thus we can already simplify and fix $\lambda=-1$ before we perform the integration in $d^2\xi=d\xi_1d\xi_2$. To start this integration we follow \cite{FK} and perform the change of variables, using $\tau_1=\Re(\tau)$ and $\tau_2=\Im(\tau)$,
\begin{equation*}
    \xi_1=r \cos(\theta)-r\frac{ \tau_1}{\tau_2}\sin(\theta)\,, \ \ \xi_2=\frac{ r\,\sin(\theta)}{\tau_2}
\end{equation*}
The Jacobian of the change of coordinates is $\frac{r}{\tau_2}$. Moreover after changing variables one gets
$$
\xi_1^2+2\Re(\tau)\xi_1\xi_2+|\tau|^2\xi_2^2=r^2.
$$
One performs the integration in the angular variable $\theta$ first and the  terms one obtains  can be organized in the form
\begin{equation}\label{bigsum}
   - \frac{2\pi r}{\tau_2}\left(T_0+\tau_1 T_{1,0}+\tau_2 T_{0,1}+|\tau|^2 T_2       \right)b_0
\end{equation}
where one will note the overall multiplication on the right by $b_0$ which for our purpose cannot be moved in front as was done in \cite{Paula}.

The obtained terms are, except for this nuance, similar to those of \cite{Paula} and \cite{FK}, but
for the operator $\pertc$ there is a non-trivial term  $T_{0,1}$ which is given by

\bigskip

\begin{math}
T_{0,1}=i r^2 b_0 k \delta _1(k) b_0 k \delta _2(k)+i r^2 b_0 k \delta _1(k) b_0 \delta _2(k) k-i r^4 b_0 k \delta _1(k) k^2 b_0^2 k \delta _2(k)\\-i r^4 b_0 k \delta _1(k) k^2 b_0^2 \delta _2(k) k-i r^2 b_0 k \delta _2(k) b_0 k \delta _1(k)-i r^2 b_0 k \delta _2(k) b_0 \delta _1(k) k\\+i r^4 b_0 k \delta _2(k) k^2 b_0^2 k \delta _1(k)+i r^4 b_0 k \delta _2(k) k^2 b_0^2 \delta _1(k) k+i r^2 b_0 \delta _1(k) k b_0 k \delta _2(k)\\+i r^2 b_0 \delta _1(k) k b_0 \delta _2(k) k-i r^4 b_0 \delta _1(k) k k^2 b_0^2 k \delta _2(k)-i r^4 b_0 \delta _1(k) k k^2 b_0^2 \delta _2(k) k\\-i r^2 b_0 \delta _2(k) k b_0 k \delta _1(k)-i r^2 b_0 \delta _2(k) k b_0 \delta _1(k) k+i r^4 b_0 \delta _2(k) k k^2 b_0^2 k \delta _1(k)\\+i r^4 b_0 \delta _2(k) k k^2 b_0^2 \delta _1(k) k.
\end{math}

\bigskip

One finds that the terms $T_0$ and $T_2$ are equal and given by

\bigskip

\begin{math}
T_0=T_2=-2 r^2 k^2 b_0^2  k  \delta _1^2(k)-4 r^2 k^2 b_0^2  \delta _1(k)  \delta _1(k)-2 r^2 k^2 b_0^2  \delta _1^2(k)  k\\+2 r^4 k^4 b_0^3  k  \delta _1^2(k)+4 r^4 k^4 b_0^3  \delta _1(k)  \delta _1(k)+2 r^4 k^4 b_0^3  \delta _1^2(k)  k-r^2 b_0  k  \delta _1(k)  b_0  k  \delta _1(k)\\-r^2 b_0  k  \delta _1(k)  b_0  \delta _1(k)  k+r^4 b_0  k  \delta _1(k)  k^2 b_0^2  k  \delta _1(k)+r^4 b_0  k  \delta _1(k)  k^2 b_0^2  \delta _1(k)  k\\-r^2 b_0  \delta _1(k)  k  b_0  k  \delta _1(k)-r^2 b_0  \delta _1(k)  k  b_0  \delta _1(k)  k+r^4 b_0  \delta _1(k)  k  k^2 b_0^2  k  \delta _1(k)\\+r^4 b_0  \delta _1(k)  k  k^2 b_0^2  \delta _1(k)  k+6 r^4 k^2 b_0^2  k  \delta _1(k)  b_0  k  \delta _1(k)+6 r^4 k^2 b_0^2  k  \delta _1(k)  b_0  \delta _1(k)  k\\-2 r^6 k^2 b_0^2  k  \delta _1(k)  k^2 b_0^2  k  \delta _1(k)-2 r^6 k^2 b_0^2  k  \delta _1(k)  k^2 b_0^2  \delta _1(k)  k+6 r^4 k^2 b_0^2  \delta _1(k)  k  b_0  k  \delta _1(k)\\+6 r^4 k^2 b_0^2  \delta _1(k)  k  b_0  \delta _1(k)  k-2 r^6 k^2 b_0^2  \delta _1(k)  k  k^2 b_0^2  k  \delta _1(k)-2 r^6 k^2 b_0^2  \delta _1(k)  k  k^2 b_0^2  \delta _1(k)  k\\-4 r^6 k^4 b_0^3  k  \delta _1(k)  b_0  k  \delta _1(k)-4 r^6 k^4 b_0^3  k  \delta _1(k)  b_0  \delta _1(k)  k-4 r^6 k^4 b_0^3  \delta _1(k)  k  b_0  k  \delta _1(k)\\-4 r^6 k^4 b_0^3  \delta _1(k)  k  b_0  \delta _1(k)  k.
\end{math}

\bigskip

The term $T_{1,0}$ is more complicated and we give it for completeness, it is of the form
$$
T_{1,0}=r^2 T_{1,0}^{(2)}+r^4 T_{1,0}^{(4)}+ r^6 T_{1,0}^{(6)}
$$
where

\bigskip

\begin{math}
T_{1,0}^{(2)} =-4 k^2 b_0^2 k \delta _1 \delta _2(k)-4 k^2 b_0^2 \delta _1(k) \delta _2(k)-4 k^2 b_0^2 \delta _2(k) \delta _1(k)-4 k^2 b_0^2 \delta _1 \delta _2(k) k\\-b_0 k \delta _1(k) b_0 k \delta _2(k)-b_0 k \delta _1(k) b_0 \delta _2(k) k-b_0 k \delta _2(k) b_0 k \delta _1(k)-b_0 k \delta _2(k) b_0 \delta _1(k) k\\-b_0 \delta _1(k) k b_0 k \delta _2(k)-b_0 \delta _1(k) k b_0 \delta _2(k) k-b_0 \delta _2(k) k b_0 k \delta _1(k)-b_0 \delta _2(k) k b_0 \delta _1(k) k.
\end{math}

\bigskip
\begin{math}
T_{1,0}^{(4)} = 4 k^4 b_0^3 k \delta _1 \delta _2(k)+4 k^4 b_0^3 \delta _1(k) \delta _2(k)+4 k^4 b_0^3 \delta _2(k) \delta _1(k)+4 k^4 b_0^3 \delta _1 \delta _2(k) k\\+b_0 k \delta _1(k) k^2 b_0^2 k \delta _2(k)+b_0 k \delta _1(k) k^2 b_0^2 \delta _2(k) k+b_0 k \delta _2(k) k^2 b_0^2 k \delta _1(k)+b_0 k \delta _2(k) k^2 b_0^2 \delta _1(k) k\\+b_0 \delta _1(k) k k^2 b_0^2 k \delta _2(k)+b_0 \delta _1(k) k k^2 b_0^2 \delta _2(k) k+b_0 \delta _2(k) k k^2 b_0^2 k \delta _1(k)+b_0 \delta _2(k) k k^2 b_0^2 \delta _1(k) k\\+6 k^2 b_0^2 k \delta _1(k) b_0 k \delta _2(k)+6 k^2 b_0^2 k \delta _1(k) b_0 \delta _2(k) k+6 k^2 b_0^2 k \delta _2(k) b_0 k \delta _1(k)+6 k^2 b_0^2 k \delta _2(k) b_0 \delta _1(k) k\\+6 k^2 b_0^2 \delta _1(k) k b_0 k \delta _2(k)+6 k^2 b_0^2 \delta _1(k) k b_0 \delta _2(k) k+6 k^2 b_0^2 \delta _2(k) k b_0 k \delta _1(k)+6 k^2 b_0^2 \delta _2(k) k b_0 \delta _1(k) k.
\end{math}

\bigskip
\begin{math}
T_{1,0}^{(6)} = -2 k^2 b_0^2 k \delta _1(k) k^2 b_0^2 k \delta _2(k)-2 k^2 b_0^2 k \delta _1(k) k^2 b_0^2 \delta _2(k) k-2 k^2 b_0^2 k \delta _2(k) k^2 b_0^2 k \delta _1(k)\\-2 k^2 b_0^2 k \delta _2(k) k^2 b_0^2 \delta _1(k) k-2 k^2 b_0^2 \delta _1(k) k k^2 b_0^2 k \delta _2(k)-2 k^2 b_0^2 \delta _1(k) k k^2 b_0^2 \delta _2(k) k\\-2 k^2 b_0^2 \delta _2(k) k k^2 b_0^2 k \delta _1(k)-2 k^2 b_0^2 \delta _2(k) k k^2 b_0^2 \delta _1(k) k-4 k^4 b_0^3 k \delta _1(k) b_0 k \delta _2(k)\\-4 k^4 b_0^3 k \delta _1(k) b_0 \delta _2(k) k-4 k^4 b_0^3 k \delta _2(k) b_0 k \delta _1(k)-4 k^4 b_0^3 k \delta _2(k) b_0 \delta _1(k) k\\-4 k^4 b_0^3 \delta _1(k) k b_0 k \delta _2(k)-4 k^4 b_0^3 \delta _1(k) k b_0 \delta _2(k) k-4 k^4 b_0^3 \delta _2(k) k b_0 k \delta _1(k)\\-4 k^4 b_0^3 \delta _2(k) k b_0 \delta _1(k) k.
\end{math}

\bigskip

We check that the coefficient $T^{(j)}$ of $r^j$ in the term $T$ is non-zero only for even $j$
and that the total power of $b_0$ involved in $T^{(2s)}$ is $s+1$. Thus for $rTb_0$ as in \eqref{bigsum} we get that the general form is a sum
\begin{equation}\label{genterm}
    r T^{(2s)}b_0=\sum  b_0^{-m_0}  \rho_1
 b_0^{-m_1}\cdots  \rho_{\ell} b_0^{-m_\ell}r^{2(\sum m_j-2)+1}
\end{equation}
In order to put all these terms in a canonical form one moves the powers of $k$ to the left using the commutation of $k$ with $b_0$ and the rule
\begin{equation}\label{rulesk}
    a k^n=k^n \modu^{n/2}(a)\qqq a\in A_{\theta}^{\infty}.
\end{equation}
Thus for instance the first term of $T_{1,0}^{(6)}$ which is  $-2 k^2 b_0^2 k \delta _1(k) k^2 b_0^2 k \delta _2(k)$ is rewritten as
$$
-2 k^2 b_0^2 k \delta _1(k) k^2 b_0^2 k \delta _2(k)=
-2 k^6 b_0^2  \modu^{3/2}(\delta _1(k)) b_0^2  \delta _2(k)
$$
For such terms which are quadratic in the $\delta _j(k)$ we can use the simple formula
\begin{equation}\label{expansimple}
    \delta _j(k)=k f(\modu)\left(\delta_j(\log(k))\right)\,, \ \ f(u)=\frac{2 \left(-1+\sqrt{u}\right)}{\log(u)}
\end{equation}
which is justified below in \S \ref{expansect}. Thus the above term can be written as
$$
-2 k^6 b_0^2  \modu^{3/2}(\delta _1(k)) b_0^2  \delta _2(k)=-2 k^7 b_0^2
\modu^{2}(\delta _1(k)) b_0^2 f(\modu)\left(\delta_2(\log(k))\right)
$$
$$
=-2 k^8 b_0^2
\modu^{2}f(\modu)\left(\delta_1(\log(k))\right) b_0^2 f(\modu)\left(\delta_2(\log(k))\right)
$$
Besides terms which are quadratic in the $\delta _j(k)$ we also get terms which involve second derivatives of $k$. Thus for instance the first term of $T_{1,0}^{(2)}$ gives
$$
4 k^4 b_0^3 k \delta _1 \delta _2(k)=4 k^5 b_0^3  \delta _1 \delta _2(k)
$$
 For the terms which involve second derivatives of $k$ we need to carefully reexpress them in terms of second derivatives of $\log k$ as we now explain.

\subsection{Expansional} \hfill\medskip \label{expansect}

We write the modular automorphism in the form
\begin{equation*}
\modu (x) = e^{-h} x e^h=k^{-2}x\,k^2\,, \ \ k=e^{\frac h 2}
\end{equation*}
so that one has the permutation rule
\begin{equation*}
x\,k=k\,\modu^{\frac 12} (x)
\end{equation*}
The expansional formula can be written as
\begin{equation*}
    e^{A+B}=\sum_n\int_{\sum s_j=1, \, s_j\geq 0}e^{s_0 A}Be^{s_1 A}\ldots Be^{s_n A}\prod ds_j
\end{equation*}
We take $A=\log k$ and for $B$ the term one gets by expanding $\alpha_{t_1,t_2}(A)$ around $t_j=0$, \ie using purely imaginary arguments $t_j$,
\begin{equation*}
    B=t_1\delta_1(\log k)+t_2\delta_2(\log k)+\frac 12 t_1^2\delta_1^2(\log k)+\frac 12 t_2^2\delta_2^2(\log k)+ t_1 t_2\delta_1\delta_2(\log k)+\cdots
\end{equation*}
One has a similar expansion for $\alpha_{t_1,t_2}(k)$ which shows that \eg $\delta_1^2(k)$ is obtained from the coefficient of $\frac 12 t_1^2$ in $e^{A+B}$. More precisely one writes
$e^{A+B}$ as
$$
  e^{A+B}= e^A\left(1+\int_0^1 e^{(-1+s_0) A}Be^{(1-s_0) A}ds_0\right. $$ $$\left.+\int_0^1\int_0^{1-s_0} e^{(-1+s_0) A}Be^{((1-s_0)-(1-s_0-s_1)) A}Be^{(1-s_0-s_1)A}ds_1ds_0+\cdots\right)
$$
$$
=k\left(1+\int_0^1 \modu^{\frac u2}(B) du+\int_0^1\int_0^{u}\modu^{\frac u2}(B)\modu^{\frac v2}(B)dv du+\cdots\right)
$$
where in the second integral one lets $u=1-s_0$ which varies from $0$ to $1$ and $v=1-s_0-s_1$ which varies from $0$ to $u$. In terms of the derivations $\nabla_j=\log \modu^{(j)}$ this gives the formula
\begin{equation}\label{nablaj}
 k^{-1} \delta_1^2(k)=f(\nabla)\delta_1^2(\log k)+2 g(\nabla_1,\nabla_2)\delta_1(\log k)\delta_1(\log k)
\end{equation}
where one has
\begin{equation}\label{nablaj1}
f(s)=\int_0^1e^{us/2}du=\frac{2 \left(-1+e^{s/2}\right)}{s}
\end{equation}
and
\begin{equation}\label{nablaj2}
g(s,t)=\int_0^1\int_0^{u}e^{us/2}e^{vt/2}dv du=\frac{4 \left(e^{\frac{s+t}{2}}
s+t-e^{s/2} (s+t)\right)}{s t (s+t)}
\end{equation}
Note the coefficient $2$ in front of $g$ since $\delta_1^2(k)$ is obtained from the coefficient of
$\frac 12 t_1^2$ in $e^{A+B}$.

\subsection{Rearrangement Lemma} \hfill\medskip\label{sectrearang}

In order to perform the integration in the radial variable $r$ one needs a more general lemma than Lemma 6.2 of \cite{Paula}. Note that only even powers of $r$ appear in the expressions for the terms $T$ but since one needs to multiply by the Jacobian of the change of coordinates, the integration in $r$ only involves odd powers of $r$. Thus it is natural to let $u=r^2$ so that
$du=2rdr$.

\begin{lem}\label{permlem}
-- For every element $\rho_j$ of $A_{\theta}^{\infty}$ and every integers $m_j>0$ one has,
\begin{eqnarray}\label{permid}
   && \int_0^{\infty} ( k^2 \, u + 1)^{-m_0}  \rho_1
 ( k^2 \, u + 1)^{-m_1}\cdots  \rho_{\ell} ( k^2 \, u + 1)^{-m_\ell}u^{\sum m_j-2} du\nonumber \\
  &=& k^{-2(\sum m_j-1)}\,F_{m_0,m_1,\ldots,m_\ell}
  (\modu_{(1)}, \modu_{(2)},\ldots,\modu_{(\ell)}) (\rho_1\rho_2 \ldots \rho_{\ell}) \,,
\end{eqnarray}
where the  function $F_{m_0,m_1,\ldots,m_\ell}(u_1,u_2,\ldots,u_{\ell})$ is
\begin{eqnarray}\label{fctF}
&& \quad \ F_{m_0,m_1,\ldots,m_\ell}(u_1,u_2,\ldots,u_{\ell})\nonumber \\
  &=& \int_0^{\infty}(u + 1)^{-m_0} \prod_1^\ell (u\prod_1^j u_h+1)^{-m_j}u^{\sum m_j-2} du
\end{eqnarray}
and $\modu_{(i)}$ signifies that $\modu$ acts on the $i$-th factor.
\end{lem}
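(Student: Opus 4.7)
The plan is to bring every occurrence of $k$ to the far left of the word, converting the integral into a scalar change-of-variables computation. The single ingredient needed is the commutation rule $\rho k^{2} = k^{2}\modu(\rho)$, which is a restatement of $\modu(\rho) = k^{-2}\rho k^{2}$. Upgraded via functional calculus for the positive bounded operator $k^{2}$, it becomes
\begin{equation*}
\rho\,f(k^{2})\, =\, f\bigl(k^{2}\modu_{(1)}\bigr)(\rho),
\end{equation*}
where $k^{2}\modu_{(1)}$ is the bounded operator acting on the $\rho$-slot by first applying $\modu$ and then left-multiplying by $k^{2}$. Specializing to $f(x) = (xu+1)^{-m}$ gives the working identity
\begin{equation*}
\rho\,(k^{2}u+1)^{-m} \, =\, \bigl(k^{2}u\,\modu_{(1)}+1\bigr)^{-m}(\rho).
\end{equation*}

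Next I would iterate this identity left-to-right through the word $(k^{2}u+1)^{-m_{0}}\rho_{1}(k^{2}u+1)^{-m_{1}}\cdots\rho_{\ell}(k^{2}u+1)^{-m_{\ell}}$. Since $\modu$ is an algebra automorphism, each successive twist $\modu_{(*)}$ acts diagonally across the already-processed prefix via $\modu^{n}(ab) = \modu^{n}(a)\modu^{n}(b)$, and a short induction shows that the entire product rearranges as
\begin{equation*}
(k^{2}u+1)^{-m_{0}}\,\prod_{j=1}^{\ell}\bigl(k^{2}u\,\modu_{(1)}\modu_{(2)}\cdots\modu_{(j)}+1\bigr)^{-m_{j}}(\rho_{1}\cdots\rho_{\ell}).
\end{equation*}
The key bookkeeping is the combinatorial identity $\prod_{i=1}^{\ell} u_{i}^{N_{i}} = \prod_{j=1}^{\ell}(u_{1}\cdots u_{j})^{n_{j}}$ with $N_{i} = n_{i}+\cdots+n_{\ell}$, which is precisely what converts the modular data accumulated by $\rho_{i}$ into the cumulative product $\modu_{(1)}\cdots\modu_{(j)}$ attached to the $j$-th resolvent factor.

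At this point every operator standing to the left of the tuple $(\rho_{1},\ldots,\rho_{\ell})$ is built from $k^{2}$ on the left and the pairwise commuting operators $\modu_{(i)}$, so the $u$-integral becomes a bona fide change-of-variables computation: substituting $v = k^{2}u$ turns each factor $(k^{2}u\modu_{(1)}\cdots\modu_{(j)}+1)^{-m_{j}}$ into $(v\modu_{(1)}\cdots\modu_{(j)}+1)^{-m_{j}}$ while absorbing the Jacobian $du = k^{-2}dv$ together with $u^{\sum m_{j}-2}$ to produce the overall prefactor $k^{-2(\sum m_{j}-1)}$. What remains is exactly the defining integral of $F_{m_{0},\ldots,m_{\ell}}(\modu_{(1)},\ldots,\modu_{(\ell)})$ in the form of the statement, where the arguments appear only through the cumulative products $\prod_{h=1}^{j} u_{h}$.

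The main obstacle is giving a correct operator-theoretic interpretation of the composite operators $k^{2}\modu_{(1)}\cdots\modu_{(j)}$ and their resolvents on a tuple of non-commuting elements: one must verify that the different $\modu_{(i)}$ mutually commute, because they act on disjoint slots, so that their joint functional calculus is well-defined, and one must keep the expression in operator form rather than expand into series -- the per-term integrals of the formal binomial expansion diverge individually, even though the total integral converges unconditionally thanks to the bounds $u^{\sum m_{j}-2}$ near $0$ and $(k^{2}u+1)^{-\sum m_{j}} u^{\sum m_{j}-2} = O(u^{-2})$ at infinity.
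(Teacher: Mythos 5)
Your argument is correct, and it reaches \eqref{permid} by a genuinely different route from the paper's. You start from the commutation rule $\rho\,k^{2}=k^{2}\modu(\rho)$, i.e.\ $R_{k^{2}}=L_{k^{2}}\circ\modu$ (right and left multiplication), promote it by functional calculus of the bounded, positive, invertible operator $R_{k^{2}}$ to $\rho\,(k^{2}u+1)^{-m}=(uL_{k^{2}}\modu+1)^{-m}(\rho)$, and push each resolvent factor to the left; since $\modu$ is an automorphism, the $j$-th factor accumulates the cumulative twist $\modu_{(1)}\cdots\modu_{(j)}$ after crossing $\rho_j,\dots,\rho_1$. Because $L_{k^{2}}$ commutes with every $\modu_{(i)}$ (as $\modu(k)=k$) and the $\modu_{(i)}$ commute among themselves, the joint continuous functional calculus of this commuting family of positive operators, whose joint spectrum is a compact subset of $(0,\infty)^{\ell+1}$, reduces the $u$-integral to the scalar substitution $v=\lambda u$, yielding both the prefactor $k^{-2(\sum m_j-1)}$ and the integral \eqref{fctF}. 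The paper instead substitutes $u=e^{s}$, inserts compensating exponentials $e^{(m_j-\alpha_j)(s+f)}$ with $\sum\alpha_j=1$ so that each factor becomes a shifted kernel $H_{m_j,\alpha_j}$ with explicit inverse Fourier transform $G_{m_j,\alpha_j}$, and realizes the modular twists as the one-parameter family $\modu^{-it_j}$; the final $s$- and $t$-integrations reassemble $F$. The algebraic core (commuting $k$ past the $\rho_j$ at the cost of modular operators) is the same, but your functional-calculus implementation is shorter and avoids the auxiliary parameters $\alpha_j$, whereas the paper's Fourier form has the side benefit of exhibiting $F(\modu_{(1)},\dots,\modu_{(\ell)})$ concretely as an integral of products of the automorphisms $\sigma_t$, which is the representation the paper reuses in \S\ref{functrel}. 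Your closing caveats --- that the $\modu_{(i)}$ must be checked to commute, and that one may not expand the resolvents termwise since the individual integrals $\int_0^\infty u^{\,n+\sum m_j-2}\,du$ diverge --- identify exactly the points that need care; once you add the (easy) commutation of $L_{k^{2}}$ with the $\modu_{(i)}$, nothing essential is missing.
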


\proof Let $G_n$ be the inverse Fourier transform of the function
$$
t\mapsto (e^{t/2}+e^{-t/2})^{-n}
$$
One has
$$
G_1(s)=\frac{1}{e^{-\pi  s}+e^{\pi  s}}\, \ G_2(s)=\frac{s}{-e^{-\pi  s}+e^{\pi  s}}\,, \ G_3(s)=
\frac{1+4 s^2}{8 \left(e^{-\pi  s}+e^{\pi  s}\right)}
$$
$$
G_4(s)=\frac{s \left(1+s^2\right)}{6 \left(-e^{-\pi  s}+e^{\pi  s}\right)}\, , \ G_5(s)=\frac{9+40 s^2+16 s^4}{384 \left(e^{-\pi  s}+e^{\pi  s}\right)}
$$
and more generally
$$
G_n(s)=\frac{P_n(s)}{e^{\pi  s}-(-1)^n e^{-\pi  s}}
$$
The role of the polynomials $P_n$ which appear in the numerator is to
compensate for the zeros of the denominator in larger and larger
strips. Thus the imaginary part of the first singularity of $G_n$ is $\frac n2$. The inverse Fourier transform of the function, defined for $\alpha\in ]0,n[$ by
$$
H_{n,\alpha}(t)=e^{(n-\alpha)t}(e^{t}+1)^{-n}
$$
is $G_{n,\alpha}(s)=G_n(s-i(\frac n2-\alpha))$ so that
\begin{equation}\label{fourierhn}
H_{n,\alpha}(t)=\int_{-\infty}^\infty G_n(s-i(\frac n2-\alpha))e^{-ist}ds
\end{equation}
We now perform in the left hand side of \eqref{permid} the change of variables $u=e^s$, with $k=e^{f/2}$, and obtain
$$
J=\int_{-\infty}^{\infty} ( e^{(s+f)} + 1)^{-m_0}  \rho_1
 (e^{(s+f)} + 1)^{-m_1}\cdots  \rho_{\ell} ( e^{(s+f)} + 1)^{-m_\ell}e^{(\sum m_j-1)s} ds
 $$
We now choose positive real numbers $\alpha_j>0$ such that $\sum \alpha_j=1$ and replace each term
$(e^{(s+f)} + 1)^{-m_j}$ by $e^{(m_j-\alpha_j)(s+f)}(e^{(s+f)} + 1)^{-m_j}$. This is fine for the
$s$ variable since it accounts for  the term $e^{(\sum m_j-1)s}$, but taking care of  the $\rho_j$ one gets
$$
J=e^{-(\sum m_j-1)f}\int_{-\infty}^{\infty}H_{m_0,\alpha_0}(s+f)\modu^{\beta_1}(\rho_1)
H_{m_1,\alpha_1}(s+f)\cdots\modu^{\beta_\ell}(\rho_\ell)H_{m_\ell,\alpha_\ell}(s+f)ds
$$
where
\begin{equation}\label{betaj}
\beta_j=-\sum_j^\ell(m_i-\alpha_i)
\end{equation}
We set $\rho'_j=\modu^{\beta_j}(\rho_j)$. Using \eqref{fourierhn} we can then write $J$ as an integral
of terms of the form
\begin{equation}\label{gentermbis}
e^{-(\sum m_j-1)f}H_{m_0,\alpha_0}(s+f)\rho'_1 e^{-i(s+f)t_1}\rho'_2\cdots e^{-i(s+f)t_{\ell-1}}\rho'_\ell e^{-i(s+f)t_{\ell}}
\end{equation}
with respect to the measure given by
$$
\prod_1^\ell G_{m_j,\alpha_j}(t_j)dt_j ds
$$
The term \eqref{genterm} can be written as
$$
e^{-(\sum m_j-1)f}H_{m_0,\alpha_0}(s+f)e^{-i(\sum_1^\ell t_j)(s+f)}\prod\modu^{-i\sum_h^\ell t_j}(\rho'_h ) $$
One has
$$
\int_{-\infty}^{\infty} H_{m_0,\alpha_0}(s+f)e^{-i(\sum_1^\ell t_j)(s+f)}ds=2\pi G_{m_0,\alpha_0}(-\sum_1^\ell t_j)
$$
Thus one obtains
$$
J=2\pi e^{-(\sum m_j-1)f}\int\prod\modu^{-i\sum_h^\ell t_j}(\rho'_h )G_{m_0,\alpha_0}(-\sum_1^\ell t_j)\prod_1^\ell G_{m_j,\alpha_j}(t_j)dt_j
$$
We now replace $\rho'_j=\modu^{\beta_j}(\rho_j)$ and replace the term
$$
\modu^{-i\sum_h^\ell t_j}(\rho'_h )=\modu^{-i\sum_h^\ell t_j+\beta_h}(\rho_h )
$$
by
$$
u_h^{-i\sum_h^\ell t_j+\beta_h}
$$
and we are dealing with the scalar function of $\ell$ variables
\begin{eqnarray}\label{fctF1}
% \nonumber to remove numbering (before each equation)
   && F_{m_0,m_1,\ldots,m_\ell}(u_1,u_2,\ldots,u_{\ell})\\
 &=& 2\pi \int\prod u_h^{-i\sum_h^\ell t_j+\beta_h}G_{m_0,\alpha_0}(-\sum_1^\ell t_j)\prod_1^\ell G_{m_j,\alpha_j}(t_j)dt_j \nonumber
\end{eqnarray}
We can now write
$$
2\pi G_{m_0,\alpha_0}(-\sum_1^\ell t_j)=\int_{-\infty}^{\infty} H_{m_0,\alpha_0}(s)e^{-i(\sum_1^\ell t_j)s}ds
$$
With $u_h=e^{s_h}$ we can perform the integral in $t_j$ one gets that the coefficient of $t_j$ in the exponent is $-is-i\sum_1^j s_h$ so that the integral in $t_j$ gives the Fourier transform of $G_{m_j,\alpha_j}$ at $s+ \sum_1^j s_h$. This is
$$
e^{(m_j-\alpha_j)(s+ \sum_1^j s_h)}(e^{s+ \sum_1^j s_h}+1)^{-m_j}=e^{(m_j-\alpha_j)s}(\prod_1^j u_h)^{(m_j-\alpha_j)}(e^s\prod_1^j u_h+1)^{-m_j}
$$
In the product of these terms from $j=1$ to $j=\ell$ one gets $u_h$ with the exponent
$\sum_h^\ell (m_j-\alpha_j)$. Thus this cancels the term $u_h^{\beta_h}$. We thus get
\begin{eqnarray}\label{fctF2}
% \nonumber to remove numbering (before each equation)
   && F_{m_0,m_1,\ldots,m_\ell}(u_1,u_2,\ldots,u_{\ell})\\
 &=& \int_{-\infty}^{\infty}(e^s + 1)^{-m_0} \prod_1^\ell (e^s\prod_1^j u_h+1)^{-m_j}e^{(\sum m_j-1)s} ds \nonumber
\end{eqnarray}
which proves the required equality. \endproof

One has by construction
$$
F_{m_0,m_1,\ldots,m_\ell}(u_1,u_2,\ldots,u_{\ell})=H_{m_0,m_1,\ldots,m_\ell}(u_1,u_1 u_2,\ldots,u_1\cdots u_{\ell})
$$
where
\begin{eqnarray}\label{fctH}
&& \quad \ H_{m_0,m_1,\ldots,m_\ell}(u_1,u_2,\ldots,u_{\ell})\nonumber \\
  &=& \int_0^{\infty}(u + 1)^{-m_0} \prod_1^\ell (u  u_h+1)^{-m_j}u^{\sum m_j-2} du
\end{eqnarray}
The first few functions of two variables that we shall use are given as follows\begin{tiny}
\begin{eqnarray}
% \nonumber to remove numbering (before each equation)
 H_{1,1,1}(a,b) &=& \frac{(-1+b) \text{Log}(a)-(-1+a) \text{Log}(b)}{(-1+a) (-1+b) (-a+b)} \nonumber\\
  H_{1,2,1}(a,b) &=& \frac{(-1+b) ((-1+a) (a-b)+a (1-2 a+b) \text{Log}(a))+(-1+a)^2 a \text{Log}(b)}{(-1+a)^2 a (a-b)^2 (-1+b)}\nonumber\\
  H_{2,1,1}(a,b) &=& \frac{(-1+b)^2 \text{Log}(a)+(-1+a) ((a-b) (-1+b)-(-1+a) \text{Log}(b))}{(-1+a)^2 (a-b) (-1+b)^2}\nonumber \\
  H_{2,2,1}(a,b)&=& \frac{(-1+b) \left((-1+a) (a-b) \left(1+a^2-(1+a) b\right)+a (-1+3 a-2 b) (-1+b) \text{Log}(a)\right)-(-1+a)^3 a \text{Log}(b)}{(-1+a)^3 a (a-b)^2 (-1+b)^2}\nonumber \\
  H_{3,1,1}(a,b) &=& \frac{(-1+a) (5+a (-3+b)-3 b) (a-b) (-1+b)-2 (-1+b)^3 \text{Log}(a)+2 (-1+a)^3 \text{Log}(b)}{2 (-1+a)^3 (a-b) (-1+b)^3}\nonumber
\end{eqnarray}
\end{tiny}

\end{document}